\newcommand{\email}[1]{\href{mailto:#1}{#1}}
\numberwithin{equation}{section}
\newtheorem{theorem}{Theorem}
\newtheorem{proposition}[theorem]{Proposition}
\newtheorem{lemma}[theorem]{Lemma}
\theoremstyle{remark}
\newtheorem{remark}[theorem]{Remark}
\theoremstyle{definition}
\newtheorem{assumption}[theorem]{Assumption}
\newcommand{\st}{\,:\,}
\newcommand{\Real}{\mathbb{R}}
\newcommand\eqtext[1]{\mathrel{\stackrel{\mbox{\normalfont\scriptsize #1}}{=}}}
\DeclareRobustCommand{\bvec}[1]{\boldsymbol{#1}}
  \renewcommand{\bvec}[1]{#1}%
\newcommand{\s}[1]{\hat{#1}}
\newcommand{\uvec}[1]{\underline{\bvec{#1}}}
\newcommand{\cvec}[1]{\bvec{\mathcal{#1}}}
\newcommand{\suvec}[1]{\underline{\bvec{\s{#1}}}}
\newcommand{\sbvec}[1]{\s{\bvec{#1}}}
\newcommand{\su}[1]{\underline{\s{#1}}}
\DeclareMathOperator{\GRAD}{\bf grad}
\DeclareMathOperator{\CURL}{\bf curl}
\DeclareMathOperator{\DIV}{div}
\DeclareMathOperator{\ROT}{rot}
\DeclareMathOperator{\VROT}{\bf rot}
\newcommand{\compl}{{\rm c}}
\newcommand{\Hcurl}[1]{\bvec{H}(\CURL;#1)}
\newcommand{\Hdiv}[1]{\bvec{H}(\DIV;#1)}
\newcommand{\ser}{{\rm s}}
\newcommand{\Xgrad}[2][k]{\underline{X}_{\GRAD,#2}^{#1}}
\newcommand{\sXgrad}[2][k]{\su{X}_{\GRAD,#2}^{#1}}
\newcommand{\Xcurl}[2][k]{\uvec{X}_{\CURL,#2}^{#1}}
\newcommand{\sXcurl}[2][k]{\suvec{X}_{\CURL,#2}^{#1}}
\newcommand{\Xdiv}[2][k]{\uvec{X}_{\DIV,#2}^{#1}}
\newcommand{\sXdiv}[2][k]{\suvec{X}_{\DIV,#2}^{#1}}
\newcommand{\Xbullet}[2][k]{\underline{X}_{\bullet,#2}^{#1}}
\newcommand{\sXbullet}[2][k]{\s{X}_{\bullet,#2}^{#1}}
\newcommand{\Xg}[1]{X_{#1}}
\newcommand{\Hg}[1]{{\mathcal H}_{#1}}
\newcommand{\Lg}[1]{{\mathcal L}_{#1}}
\newcommand{\dom}{{\mathrm{dom}}}
\newcommand{\Ig}[1]{I_{#1}}
\newcommand{\dg}[1]{{\rm d}_{#1}}
\newcommand{\tXg}[1]{\s{X}_{#1}}
\newcommand{\tIg}[1]{\s{I}_{#1}}
\newcommand{\tdg}[1]{{\rm \s{d}}_{#1}}
\newcommand{\Eg}[1]{E_{#1}}
\newcommand{\Rg}[1]{\s{R}_{#1}}
\newcommand{\EPoly}[2][k-1]{E_{\Poly{},#2}^{#1}}
\newcommand{\RPoly}[1]{\s{R}^{\ell_{#1}}_{\Poly{},#1}}
\newcommand{\Egrad}[1]{\underline{E}_{\GRAD,#1}}
\newcommand{\Rgrad}[1]{\su{R}_{\GRAD,#1}}
\newcommand{\RRoly}[1]{\sbvec{R}^{k-1}_{\cvec{R},#1}}
\newcommand{\Ecurl}[1]{\uvec{E}_{\CURL,#1}}
\newcommand{\Rcurl}[1]{\suvec{R}_{\CURL,#1}}
\newcommand{\Ediv}[1]{\uvec{E}_{\DIV,#1}}
\newcommand{\Rdiv}[1]{\suvec{R}_{\DIV,#1}}
\newcommand{\Ebullet}[1]{\underline{E}_{\bullet,#1}}
\newcommand{\Id}{{\rm Id}}
\newcommand{\sfP}{{\mathsf{P}}}
\newcommand{\sfQ}{{\mathsf{Q}}}
\newcommand{\bdry}[1]{\mathcal{B}_{#1}}
\newcommand{\sbdry}[1]{\mathcal{B}_{\ser,#1}}
\newcommand{\sfb}{{\mathsf{b}}}
\newcommand{\dist}{\mathrm{dist}}
\newcommand{\Igrad}[1]{\underline{I}_{\GRAD,#1}^k}
\newcommand{\Icurl}[1]{\uvec{I}_{\CURL,#1}^k}
\newcommand{\Idiv}[1]{\uvec{I}_{\DIV,#1}^{k}}
\newcommand{\sIgrad}[1]{\su{I}_{\GRAD,#1}^k}
\newcommand{\sIcurl}[1]{\suvec{I}_{\CURL,#1}^k}
\newcommand{\lproj}[2]{\pi_{\Poly{},#2}^{#1}}
\newcommand{\Rproj}[2]{\bvec{\pi}_{\cvec{R},#2}^{#1}}
\newcommand{\Rcproj}[2]{\bvec{\pi}_{\cvec{R},#2}^{\compl,#1}}
\newcommand{\Gproj}[2]{\bvec{\pi}_{\cvec{G},#2}^{#1}}
\newcommand{\Gcproj}[2]{\bvec{\pi}_{\cvec{G},#2}^{\compl,#1}}
\newcommand{\Xcproj}[2]{\bvec{\pi}_{\cvec{X},#2}^{\compl,#1}}
\newcommand{\uGT}[1][k]{\uvec{G}_T^{#1}}
\newcommand{\uGF}[1][k]{\uvec{G}_F^{#1}}
\newcommand{\uGP}[1][k]{\uvec{G}_\sfP^{#1}}
\newcommand{\suGT}[1][k]{\suvec{G}_T^{#1}}
\newcommand{\suGF}[1][k]{\suvec{G}_F^{#1}}
\newcommand{\suGP}[1][k]{\suvec{G}_\sfP^{#1}}
\newcommand{\uCT}[1][k]{\uvec{C}_T^{#1}}
\newcommand{\suCT}[1][k]{\suvec{C}_T^{#1}}
\newcommand{\uGh}[1][k]{\uvec{G}_h^{#1}}
\newcommand{\suGh}[1][k]{\suvec{G}_h^{#1}}
\newcommand{\uCh}[1][k]{\uvec{C}_h^{#1}}
\newcommand{\suCh}[1][k]{\suvec{C}_h^{#1}}
\newcommand{\Dh}[1][k]{D_h^{#1}}
\newcommand{\GE}[1][k]{G_E^{#1}}
\newcommand{\cGF}[1][k]{\boldsymbol{\mathsf{G}}_F^{#1}}
\newcommand{\cGT}[1][k]{\boldsymbol{\mathsf{G}}_T^{#1}}
\newcommand{\cGP}[1][k]{\boldsymbol{\mathsf{G}}_\sfP^{#1}}
\newcommand{\CF}[1][k]{C_F^{#1}}
\newcommand{\cCT}[1][k]{\boldsymbol{\mathsf{C}}_T^{#1}}
\newcommand{\DT}[1][k]{D_T^{#1}}
\newcommand{\trE}{\gamma_E^{k+1}}
\newcommand{\trF}{\gamma_F^{k+1}}
\newcommand{\trFt}{\bvec{\gamma}_{{\rm t},F}^k}
\newcommand{\faces}[1]{\mathcal{F}_{#1}}
\newcommand{\edges}[1]{\mathcal{E}_{#1}}
\newcommand{\FT}{\faces{T}}
\newcommand{\ET}[1][T]{\edges{#1}}
\newcommand{\EF}{\edges{F}}
\newcommand{\normal}{\bvec{n}}
\newcommand{\tangent}{\bvec{t}}
\newcommand{\Poly}[2][]{\mathcal{P}_{#1}^{#2}}
\newcommand{\vPoly}[2][]{\cvec{P}_{#1}^{#2}}
\newcommand{\Roly}[1]{\cvec{R}^{#1}}
\newcommand{\Goly}[1]{\cvec{G}^{#1}}
\newcommand{\cRoly}[1]{\cvec{R}^{\compl,#1}}
\newcommand{\cGoly}[1]{\cvec{G}^{\compl,#1}}
\newcommand{\norm}[2][]{\|#2\|_{#1}}
\newcommand{\vvvert}{\vert\kern-0.25ex\vert\kern-0.25ex\vert}
\newcommand{\tnorm}[2][]{\vvvert #2\vvvert_{#1}}
\DeclareMathOperator{\Ker}{Ker}
\DeclareMathOperator{\Image}{Im}
\newcommand{\Mh}[1][h]{\mathcal{M}_{#1}}
\newcommand{\Th}[1][h]{\mathcal{T}_{#1}}
\newcommand{\Fh}[1][h]{\mathcal{F}_{#1}}
\newcommand{\Eh}[1][h]{\mathcal{E}_{#1}}
\newcommand{\Vh}{\mathcal{V}_h}
\newcommand{\Pgrad}[1][k+1]{P_{\GRAD,T}^{#1}}
\newcommand{\Pcurl}[1][T]{\bvec{P}_{\CURL,#1}^k}
\newcommand{\Pdiv}[1][T]{\bvec{P}_{\DIV,#1}^k}
\newcommand{\Pbullet}[1][l]{P_{\bullet,T}^{#1}}
\newcommand{\SG}[1]{\bvec{S}_{\GRAD,#1}^k}
\newcommand{\SC}[1]{\bvec{S}_{\CURL,#1}^k}
\newcommand{\sep}{$\diamond$}
\pgfplotsset{select coords between index/.style 2 args={
    x filter/.code={
        \ifnum\coordindex<#1\fi
        \ifnum\coordindex>#2\fi
    }
}}
\begin{document}

\title{Homological- and analytical-preserving serendipity framework for polytopal complexes, with application to the DDR method}

\author[1]{Daniele A. Di Pietro}
\author[2]{J\'er\^ome Droniou}
\affil[1]{IMAG, Univ Montpellier, CNRS, Montpellier, France, \email{daniele.di-pietro@umontpellier.fr}}
\affil[2]{School of Mathematics, Monash University, Melbourne, Australia, \email{jerome.droniou@monash.edu}}

\maketitle

\begin{abstract}
  In this work we investigate from a broad perspective the reduction of degrees of freedom through serendipity techniques for polytopal methods compatible with Hilbert complexes.
  We first establish an abstract framework that, given two complexes connected by graded maps, identifies a set of properties enabling the transfer of the homological and analytical properties from one complex to the other.
  This abstract framework is designed having in mind discrete complexes, with one of them being a reduced version of the other, such as occurring when applying serendipity techniques to numerical methods. We then use this framework as an overarching blueprint to design a serendipity DDR complex.
  Thanks to the combined use of higher-order reconstructions and serendipity, this complex compares favorably in terms of degrees of freedom (DOF) count to all the other polytopal methods previously introduced and also to finite elements on certain element geometries.
  The gain resulting from such a reduction in the number of DOFs is numerically evaluated on two model problems: a magnetostatic model, and the Stokes equations.
  \medskip\\
  \textbf{Key words.} discrete de Rham method, Virtual Element method, compatible discretisations, polytopal methods, serendipity\medskip\\
  \textbf{MSC2010.} 65N30, 65N99, 65N12, 78A30
\end{abstract}



\section{Introduction}

\subsection{Hilbert complexes and their role in the stability of partial differential equations}

A \emph{Hilbert complex} is a sequence of Hilbert spaces $X_i$ connected by closed densely defined linear operators $\dg{i}:X_i\to X_{i+1}$ such that the range of $\dg{i}$ is contained in the kernel of $\dg{i+1}$ (\emph{complex property}).
The best-known example of Hilbert complex is the de Rham complex which, for a domain $\Omega$ of $\Real^3$, reads
\begin{equation*}
  \begin{tikzcd}
    \Real\arrow[r,hook] & H^1(\Omega)
    \arrow{r}{\GRAD} & \Hcurl{\Omega}
    \arrow{r}{\CURL} & \Hdiv{\Omega}
    \arrow{r}{\DIV} & L^2(\Omega)
    \arrow{r}{0} & 0,
  \end{tikzcd}
\end{equation*}
where $H^1(\Omega)$ is spanned by scalar-valued functions that are square-integrable over $\Omega$ along with their gradient, while
$\Hcurl{\Omega}$ and $\Hdiv{\Omega}$ are spanned by vector-valued functions that are square-integrable over $\Omega$ along with their curl and divergence, respectively.
The complex property corresponds, in this case,  to the classical relations $\GRAD C = \bvec{0}$ for all $C\in\Real$, $\CURL\GRAD = \bvec{0}$, and $\DIV\CURL = 0$.

Moreover, the divergence $\DIV:\Hdiv{\Omega}\to L^2(\Omega)$ is surjective and, depending on the topology of $\Omega$, the previous properties can become stronger.
Specifically, if $\Omega$ is not crossed by any tunnel (i.e., its first Betti number $b_1$ is zero), then $\Image\GRAD = \Ker\CURL$: for every function $\bvec{v}\in\Hcurl{\Omega}$ such that $\CURL\bvec{v} = \bvec{0}$, there exists a function $q\in H^1(\Omega)$ such that $\bvec{v} = \GRAD q$.
Similarly, if $\Omega$ does not enclose any void (i.e., its second Betti number $b_2$ is zero), then $\Image\CURL = \Ker\DIV$: for every function $\bvec{w}\in\Hdiv{\Omega}$ such that $\DIV\bvec{w} = 0$, there exists a function $\bvec{v}\in\Hcurl{\Omega}$ such that $\bvec{w} = \CURL\bvec{v}$.

The above \emph{exactness properties} are key to the well-posedness of certain classes of partial differential equations (PDEs).
An example is provided by the following formulation of magnetostatics \cite[Section 4.5.3]{Arnold:18}:
Given the magnetic permeability $\mu>0$ and the free current density $\bvec{\cal J}\in\CURL\Hcurl{\Omega}$, find the magnetic field $\bvec{\cal H}\in\Hcurl{\Omega}$ and the vector potential $\bvec{\cal A}\in\Hdiv{\Omega}$ such that
\begin{equation}\label{eq:magnetostatics}
  \begin{alignedat}{2}
    \int_\Omega\mu\bvec{\cal H}\cdot\bvec{v}
    - \int_\Omega\bvec{\cal A}\cdot\CURL\bvec{v}
    &= 0
    &\quad&\forall\bvec{v}\in\Hcurl{\Omega},
    \\
    \int_\Omega\CURL\bvec{\cal H}\cdot\bvec{w}
    + \int_\Omega\DIV\bvec{\cal A}\DIV\bvec{w}
    &= \int_\Omega\bvec{\cal J}\cdot\bvec{w}
    &\quad&\forall\bvec{w}\in\Hdiv{\Omega}.
  \end{alignedat}
\end{equation}
The first equation is the definition of the vector potential and incorporates the homogeneous natural boundary condition on the tangential trace of $\bvec{\cal A}$,
while the second expresses both Amp\`ere's law and the so-called Coulomb gauge.
The well-posedness of this problem hinges heavily on the exactness of the second portion of the de Rham complex. 
Specifically, to bound $\bvec{\cal A}$, we use $L^2$-orthogonal complements to write
\begin{equation}\label{eq:decompose.A}
\bvec{\cal A}=\bvec{\cal A}^*+\bvec{\cal A}^\bot\in \ker\DIV\oplus(\ker\DIV)^\bot=\Image\CURL \oplus(\ker\DIV)^\bot,
\end{equation}
where the last equality comes form the exactness property $\Image\CURL = \Ker\DIV$.
The component $\bvec{\cal A}^*\in\Image\CURL$ is then estimated taking $\bvec{v}$ such that $\CURL\bvec{v}=\bvec{\cal A}^*$ as test function in \eqref{eq:magnetostatics}, while a bound on $\bvec{\cal A}^\bot\in (\ker\DIV)^\bot$ is obtained using the fact that $\DIV:(\ker\DIV)^\bot\to L^2(\Omega)$ is an isomorphism, due to the exactness property $\Image \DIV=L^2(\Omega)$ (see \cite[Section 2]{Di-Pietro.Droniou.ea:20} for the detailed analysis).

When writing numerical schemes for problem \eqref{eq:magnetostatics}, reproducing exactness properties at the discrete level is crucial for stability (see, e.g., \cite[Theorem 5.4]{Arnold:18} or \cite[Theorem 10]{Di-Pietro.Droniou:21*1}), and essentially requires to derive discrete counterparts of Hilbert complexes, with continuous spaces replaced by finite-dimensional analogs.
Methods based on (exact) discrete Hilbert complexes are often referred to as \emph{compatible}.
A very general exposition of compatible finite element methods can be found in \cite{Arnold:18}.
Finite elements are, however, essentially confined to conforming meshes composed of tetrahedral or hexahedral elements.
Since the early 2010s, the development of methods supporting meshes made of much more general polytopal elements, as well as arbitrary order of accuracy, has been an extremely active field of research; a representative but by far non exhaustive list of references includes \cite{Di-Pietro.Ern:10,Di-Pietro.Ern:12,Antonietti.Cangiani.ea:16,Beirao-da-Veiga.Brezzi.ea:13,Beirao-da-Veiga.Brezzi.ea:14,Di-Pietro.Ern:15,Cockburn.Di-Pietro.ea:16,Di-Pietro.Droniou:20}.
Recent efforts have focused on the compatibility of polytopal methods; see, in particular, \cite{Beirao-da-Veiga.Brezzi.ea:16,Beirao-da-Veiga.Brezzi.ea:18*2} and \cite{Di-Pietro.Droniou.ea:20,Di-Pietro.Droniou:21*2} concerning, respectively, Virtual Element (VEM) and Discrete de Rham (DDR) methods, as well as \cite{Beirao-da-Veiga.Dassi.ea:22}, where bridges between these technologies have been built.

It is worth noticing that the notion of compatibility can be extended to the case when exactness does not hold.
Coming back to our example, when the Betti numbers $b_1$ or $b_2$ are nonzero, de Rham's cohomology characterises the non-trivial \emph{cohomology spaces} $\Ker\CURL/\Image\GRAD$ and $\Ker\DIV/\Image\CURL$.
In this case, $\ker\DIV=\Image\CURL\oplus {\mathfrak H}$ where $\mathfrak H\approx\Ker\DIV/\Image\CURL$ is the space of harmonic $2$-forms, and the decomposition \eqref{eq:decompose.A} becomes
\[
\bvec{\cal A}=\bvec{\cal A}^*+\bvec{\cal A}^\sharp+\bvec{\cal A}^\bot\in \Image\CURL \oplus\,\mathfrak H\oplus (\ker\DIV)^\bot.
\]
The weak formulation \eqref{eq:magnetostatics} must then be supplemented by conditions that enforce the orthogonality of the potential $\bvec{\cal A}$ to $\mathfrak H$ in order to control (in fact, eliminate) the component $\bvec{\cal A}^\sharp$; see \cite[Section 4.5.3]{Arnold:18}.
In these situations of non-exactness of the complexes, the notion of compatibility of the discretisation translates into the existence of isomorphisms between the cohomology spaces of the continuous and discrete complexes.

\subsection{Polytopal discrete Hilbert complexes}

Vanilla versions of polytopal methods often display more degrees of freedom (DOFs) than the corresponding finite element counterparts.
A remedy to this drawback has been proposed in \cite{Beirao-da-Veiga.Brezzi.ea:18}, where a version of the nodal ($H^1$-conforming) VEM space with fewer face DOFs has been introduced. 
The idea, inspired by serendipity finite elements, consists in selecting a subset of the DOFs sufficient to uniquely identify polynomials of the desired degree, and in using them to fix the values of the remaining DOFs.
A similar path had been previously followed in \cite{Di-Pietro:12} to reduce the number of element DOFs in the framework of discontinuous Galerkin methods.
In the context of low-order methods, this approach is also called barycentric elimination \cite{Droniou.Eymard.ea:18}, and has been used to eliminate cell unknowns in hybrid methods \cite{Eymard.Gallouet.ea:10}.
Notice that the word \emph{serendipity} historically originates from the fact that, ``by chance'', internal DOFs can be eliminated from low-order rectangular $Q^k$ finite elements without compromising accuracy; however, the above approaches, as well as the one developed in this paper, are systematic and apply to much more general geometries.

Serendipity techniques for DOFs reduction are appealing for several reasons:
they enable an interfacing of finite element and polytopal methods on standard geometries, as argued in \cite{Beirao-da-Veiga.Brezzi.ea:18};
they lead to smaller algebraic systems by reducing the number of face DOFs (which cannot be, in general, eliminated by static condensation);
in the context of nonlinear problems, they provide a means to eliminate (a portion of the) element DOFs that is potentially more efficient than static condensation, as the serendipity reduction operators need not be recomputed at each iteration of the nonlinear solver.

In the context of compatible polytopal methods, serendipity techniques have to be applied in such a way as to preserve the homological and analytical properties of the discrete complex, that is, its cohomology spaces, and its stability and approximation properties.
Concerning VEM methods, the issue of face DOFs reduction through compatible serendipity techniques has been considered in \cite{Beirao-da-Veiga.Brezzi.ea:17,Beirao-da-Veiga.Brezzi.ea:18*1}, where the authors build a virtual serendipity de Rham complex for which they provide a direct proof of local exactness properties; see, in particular, \cite[Propositions 3.2, 3.6, and 3.8]{Beirao-da-Veiga.Brezzi.ea:18*1}.
A variation of the VEM complex in the previous reference has been recently proposed in \cite{Beirao-da-Veiga.Dassi.ea:22}, where links with DDR have also been established.

Inspired by these recent results, we tackle in this paper the issue of DOFs reduction for compatible polytopal methods from a broader perspective.
  Specifically, we start by constructing an abstract framework which, given two complexes connected by \emph{graded maps} (that is, a family of maps between each space of the first complex and its counterpart in the second complex), identifies a set of requirements on these maps that enable the transfer of all the relevant homological and analytical results from one complex to the other. In particular, the properties we identify ensure that the original and serendipity complexes have isomorphic cohomologies, even in the case of non-exact complexes due to non-trivial topologies of the domain.
  Moreover, the identification of sufficient conditions for the preservation of analysis results such as Poincar\'e inequalities, primal and adjoint consistency, etc.~in a general nonconforming setting seems to be entirely new (in a conforming setting, the preservation of such properties is significantly easier).
  
This general framework is specifically adapted to the situation where one complex is a reduced version of the other, and the graded connecting maps are extensions/reductions which transfer properties from the full complex to the reduced one.
This framework then enables us to derive a serendipity version (hereafter referred to as SDDR) of the DDR complex of \cite{Di-Pietro.Droniou:21*2}.
The SDDR complex hinges on novel serendipity operators that rely on the same construction for both the discrete $H^1$ and $\bvec{H}(\CURL)$ spaces, and that enable the reduction of both faces and element DOFs.
The fulfillment of the requirements identified in the abstract framework ensures that the full panel of homological and analytical results derived in \cite{Di-Pietro.Droniou:21*2} transfers to the serendipity DDR complex.
The new SDDR complex is extremely efficient as it combines two DOFs-reduction strategies: the use of higher-order reconstructions, inherent to DDR and akin to the enhancement techniques in VEM, and serendipity on both faces and elements.
The DOFs count therefore compares favorably to existing polytopal de Rham complexes \cite{Beirao-da-Veiga.Brezzi.ea:16,Beirao-da-Veiga.Brezzi.ea:18*1,Beirao-da-Veiga.Dassi.ea:22,Di-Pietro.Droniou.ea:20,Di-Pietro.Droniou:21*1}, but also to finite elements on hexahedra (while being very close, although slightly higher, on tetrahedra).
  The achieved DOFs reduction for the discrete counterparts of the spaces $H^1(T)$ and $\Hcurl{T}$, when the element $T$ is a tetrahedron or a hexahedron and polynomial degrees $k$ between 1 and 3 are considered, is summarised in Table \ref{tab:summary.reduction} (see also Table \ref{tab:dof.count} for further details and a comparison with classical finite element spaces for $k=0,1,2$); serendipity procedures do not achieve any DOFs reduction for the lowest approximation order $k=0$, where internal DOFs are not present.
  We also represent in Figure \ref{fig:dofs} the degrees of freedom on a triangular and pentagonal face (the later being a typical face in a Voronoi tessellation, as the ones used in the tests of Section \ref{sec:tests}).
  \begin{table}
    \begin{center}
      \begin{tabular}{c|ccc|ccc}
        \toprule
        \multirow{2}{*}{Discrete space} & \multicolumn{3}{c|}{Tetrahedron} & \multicolumn{3}{c}{Hexahedron} \\
        & $k=1$ & $k=2$ & $k=3$ & $k=1$ & $k=2$ & $k=3$ \\
        \midrule
        $H^1(T)$  & -5 (-33\%)& -12 (-37.5\%) & -21 (-37.5\%)& -7 (-26\%) & -22 (-41\%) & -40 (-44\%)\\
        $\Hcurl{T}$ & -5 (-18\%) & -12 (-18\%) & -21 (-17.5\%)& -7 (-15\%) & -22 (-22\%) & -40 (-23\%)\\
        \bottomrule
      \end{tabular}
      \caption{Summary of the serendipity reduction of the number of DOFs for the DDR complex, for $T$ a tetrahedron or a hexahedron.\label{tab:summary.reduction}}
    \end{center}
  \end{table}
  \begin{figure}
    \begin{center}
      \input{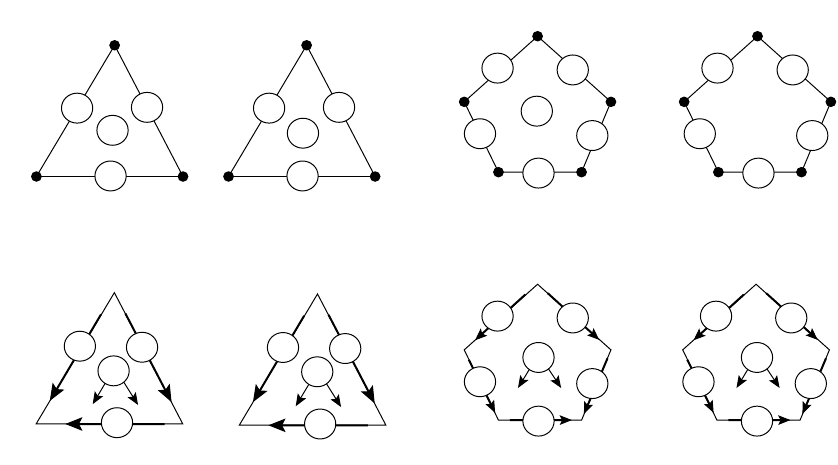_t}
    \caption{Degrees of freedom on a triangular and pentagonal faces of the discrete $H^1$ and $\bvec{H}(\CURL)$ spaces for the DDR and SDDR methods, with polynomial degree $k=3$. A black circle represents a pointwise evaluation, a circle with number $N$ represents a scalar projection on a polynomial space of dimension $N$, and a circle with number $N$ and arrow(s) represents a tangential projection on a polynomial space of dimension $N$ (which is vector-valued, in case of two arrows).}
    \label{fig:dofs}
    \end{center}
  \end{figure}
It has to be noticed that serendipity versions of finite elements on cubical meshes have been recently explored with a DOF count comparable to the present construction (see, e.g., \cite{Gillette.Hu.ea:20} and references therein), but an analysis is presented and shape functions exhibited only for the lowest-order degree.
  The method discussed here is much more general as it applies to meshes of polyhedral elements, and is additionally designed so as to preserve the commutation properties between the interpolator and discrete differential operators discussed in \cite[Section 3.5]{Di-Pietro.Droniou:21*2}.
  
Having clarified the requirements for compatible serendipity techniques through an abstract framework also enables us to provide an answer to an open question left in \cite{Beirao-da-Veiga.Brezzi.ea:17}, namely whether it is possible to perform DOFs reduction on the discrete $\bvec{H}(\DIV)$ spaces in a compatible manner.
Specifically, we prove in this paper that such a reduction is not possible as it would violate crucial requirements of the discrete complex.

\subsection{Structure of the paper}

The rest of the paper is organised as follows.
In Section \ref{sec:blueprint} we develop an abstract framework for the preservation of the homological and analytical properties in reduced complexes.
Section \ref{sec:setting} contains a description of the setting (mesh, polynomial spaces), while the original DDR complex of \cite{Di-Pietro.Droniou:21*2} is briefly recalled in Section \ref{sec:ddr}.
The construction of the novel serendipity DDR complex is detailed in Section \ref{sec:serendipity.ddr} and its properties are proved in Section \ref{sec:prop.sddr}, which also contains numerical tests to assess the performance gain resulting from the reduction of DOFs.
Finally, Appendix \ref{sec:estimates.proofs} contains the proofs of estimates of polynomials from boundary values that are at the origin of the design of the serendipity operators.


\section{Blueprint for homological and analytical properties preservation in reduced complexes}\label{sec:blueprint}

Consider a sequence $(\Xg{i},\dg{i})_i$ of spaces and (discrete) operators defining a complex.
An important notion for discrete complexes is that of \emph{consistency}, which  determines a way to ``embed'' polynomials of certain degrees into the spaces of the complex through the use of \emph{interpolators}, and states that, when applied to these embedded polynomials, the discrete operators (e.g.~the discrete gradient) act as their continuous counterparts (e.g.~the usual gradient). To precisely state such consistency properties, we therefore select, for each space $\Xg{i}$, a (possibly vector-valued) polynomial space $\Poly{k_i}$, and an interpolator $\Ig{i}:\Poly{k_i}\to\Xg{i}$.

\begin{equation}\label{eq:ser.diagram}
  \begin{tikzcd}
    & 
    \arrow{d}{\Ig{i}}\Poly{k_i}  &[3em] \Poly{k_{i+1}}
    \arrow{d}{\Ig{i+1}} & \\[2em]
    \cdots\arrow{r} & \Xg{i}
    \arrow{r}{\dg{i}}\arrow[bend left, dashed]{d}{\Rg{i}} &[3em] \Xg{i+1}
    \arrow{r}{}\arrow[bend left, dashed]{d}{\Rg{i+1}} & \cdots\\[2em]
    \cdots\arrow{r} & \tXg{i}
    \arrow{r}{\tdg{i}}\arrow[bend left]{u}{\Eg{i}} &[3em] \tXg{i+1}
    \arrow{r}{}\arrow[bend left]{u}{\Eg{i+1}} & \cdots
  \end{tikzcd}
\end{equation}

Assume that we have a second sequence of spaces $(\tXg{i})_i$ and connecting linear graded maps $\Eg{}$ and $\Rg{}$ as in Diagram \eqref{eq:ser.diagram}, and set, for all index $i$,
\begin{equation}
  \label{eq:def.tdg}
  \tdg{i}\coloneq\Rg{i+1}\dg{i}\Eg{i}.
\end{equation}
Having this diagram in mind, we will refer to $(\Xg{i},\dg{i})_i$ as the \emph{top sequence} and to $(\tXg{i},\tdg{i})_i$ as the \emph{bottom sequence}.
Since we think of the spaces $\tXg{i}$ as ``reduced'' versions (with smaller dimension) of the spaces $\Xg{i}$, we hereafter call \emph{reductions} the maps $\Rg{}$ and \emph{extensions} the maps $\Eg{}$.
We now list properties on the connecting maps $\Rg{}$, $\Eg{}$ that ensure that the two sequences have the same cohomology.

\begin{assumption}[Homological properties]\label{ass:cohomology}~
  \begin{enumerate}[label=\textbf{(C{\arabic*})}]
  \item\label{P:extension} $\Rg{i}$ is a left-inverse of $\Eg{i}$ on $\ker\tdg{i}$, that is: $(\Rg{i}\Eg{i})_{|\ker\tdg{i}}=\Id_{\ker\tdg{i}}$.
  \item\label{P:complex.ER} $(\Eg{i+1}\Rg{i+1}-\Id_{\Xg{i+1}})(\ker\dg{i+1})\subset \Image\dg{i}$,
  \item\label{P:E.R.cochain} Both the reduction and extension are cochain maps, i.e.,
    $\Rg{i+1}\dg{i} = \tdg{i}\Rg{i}$ and
    $\Eg{i+1}\tdg{i} = \dg{i}\Eg{i}$.
    Equivalently, recalling the definition \eqref{eq:def.tdg} of $\tdg{i}$,
    $\Rg{i+1}\dg{i} = \Rg{i+1}\dg{i}\Eg{i}\Rg{i}$
    and $\Eg{i+1}\Rg{i+1}\dg{i}\Eg{i} = \dg{i}\Eg{i}$.
  \end{enumerate}
\end{assumption}

\begin{proposition}[Isomorphic cohomologies]\label{prop:cohomologies}
  Under Assumption \ref{ass:cohomology}, the sequence $(\tXg{i},\tdg{i})_i$ is a complex and the cohomologies of $(\Xg{i},\dg{i})_i$ and $(\tXg{i},\tdg{i})_i$ are isomorphic.
  In particular, if one complex is exact then so is the other.
\end{proposition}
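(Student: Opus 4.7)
The plan is to construct induced maps between the cohomologies of the two sequences and show, directly from Assumption \ref{ass:cohomology}, that they are mutually inverse isomorphisms; the whole argument is algebraic diagram-chasing, with no analytic ingredient. First, I would check that $(\tXg{i},\tdg{i})_i$ really is a complex: by definition,
\[
\tdg{i+1}\tdg{i}=\Rg{i+2}\,\dg{i+1}\,\bigl(\Eg{i+1}\Rg{i+1}\dg{i}\Eg{i}\bigr),
\]
and the second equivalent form in \ref{P:E.R.cochain} collapses the parenthesised block to $\dg{i}\Eg{i}$, leaving $\Rg{i+2}(\dg{i+1}\dg{i})\Eg{i}=0$ since the top sequence is a complex.

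Next, the cochain-map identities $\Rg{i+1}\dg{i}=\tdg{i}\Rg{i}$ and $\Eg{i+1}\tdg{i}=\dg{i}\Eg{i}$ provided by \ref{P:E.R.cochain} imply immediately that both $\Rg{i}$ and $\Eg{i}$ send cocycles to cocycles and coboundaries to coboundaries, so they descend to well-defined linear maps $[\Rg{i}]:H^i(X)\to H^i(\tilde X)$ and $[\Eg{i}]:H^i(\tilde X)\to H^i(X)$. To see that $[\Rg{i}]\circ[\Eg{i}]$ is the identity, I would take a cocycle $z\in\ker\tdg{i}$: property \ref{P:extension} gives $\Rg{i}\Eg{i}z=z$ exactly (not merely up to a coboundary), whence $[\Rg{i}][\Eg{i}][z]=[z]$. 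Symmetrically, for $w\in\ker\dg{i}$, property \ref{P:complex.ER} yields $\Eg{i}\Rg{i}w-w\in\Image\dg{i-1}$, so $[\Eg{i}][\Rg{i}][w]=[w]$, and $[\Eg{i}]\circ[\Rg{i}]=\Id$ on $H^i(X)$.

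Hence $[\Rg{i}]$ and $[\Eg{i}]$ are mutually inverse linear isomorphisms, which delivers the cohomology isomorphism; as a special case, one sequence is exact at level $i$ if and only if the other is. I do not foresee a substantive obstacle: the three properties in Assumption \ref{ass:cohomology} have been tailored to provide exactly the ingredients used above, and the verification is a short chain of substitutions. The only point requiring mild care is the index-matching in \ref{P:extension} (the identity $\Rg{i}\Eg{i}=\Id$ must be read as acting on the cocycles of $\tXg{i}$ at the level actually being considered), but this is book-keeping rather than a real difficulty.
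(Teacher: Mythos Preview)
Your proof is correct and follows essentially the same route as the paper: pass to cohomology via the cochain identities in \ref{P:E.R.cochain}, then use \ref{P:extension} and \ref{P:complex.ER} to show that $[\Rg{i}]$ and $[\Eg{i}]$ are mutual inverses. The only (inessential) difference is that the paper deduces the complex property $\tdg{i+1}\tdg{i}=0$ from \ref{P:complex.ER} rather than \ref{P:E.R.cochain}---observing that $\dg{i}\Eg{i}\s{x}\in\ker\dg{i+1}$ and hence $\Eg{i+1}\tdg{i}\s{x}=\Eg{i+1}\Rg{i+1}\dg{i}\Eg{i}\s{x}\in\Image\dg{i}\subset\ker\dg{i+1}$---but your derivation via \ref{P:E.R.cochain} is equally valid and arguably more direct; and you are right to flag the index slip in the statement of \ref{P:extension}, which should read $\ker\tdg{i}$.
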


\begin{proof}
  The fact that $(\tXg{i},\tdg{i})_i$ is a complex follows from \eqref{eq:def.tdg} and \ref{P:E.R.cochain}.
  Denoting now the cohomology groups by $\mathsf{H}_i\coloneq\ker\dg{i}/\Image\dg{i-1}$ and $\s{\mathsf{H}}_i\coloneq\ker\tdg{i}/\Image\tdg{i-1}$, by \ref{P:E.R.cochain} we can project $\Eg{i}$ and $\Rg{i}$ on these groups, defining $[\Eg{i}]:\s{\mathsf{H}}_i\to \mathsf{H}_i$ and $[\Rg{i}]:\mathsf{H}_i\to \s{\mathsf{H}}_i$.
  By \ref{P:extension}, we have $[\Rg{i}][\Eg{i}]=\Id_{\s{\mathsf{H}}_i}$ and,
  by \ref{P:complex.ER} for $i-1$, $[\Eg{i}\Rg{i}-\Id_{\Xg{i}}]=0$, so that $[\Eg{i}][\Rg{i}]=\Id_{\mathsf{H}_i}$.
  This proves that $[\Eg{i}]:\s{\mathsf{H}}_i\to \mathsf{H}_i$ and $[\Rg{i}]:\mathsf{H}_i\to\s{\mathsf{H}}_i$ are isomorphisms.
\end{proof}

Our goal now is to deduce analytical properties of the bottom sequence based on corresponding properties of the top sequence. To do so, we assume that each $\Xg{i}$ is equipped with an inner product $(\cdot,\cdot)_{\Xg{},i}$, with norm $\norm[\Xg{},i]{{\cdot}}$.
The inner product
$(\cdot,\cdot)_{\tXg{},i}$ and associated norm $\norm[\tXg{},i]{{\cdot}}$ on $\tXg{i}$ are defined so that $\Eg{i}$ is an isometry, that is:
\begin{equation}\label{eq:def.ps.tXg}
  (\s{x},\s{y})_{\tXg{},i}\coloneq (\Eg{i}\s{x},\Eg{i}\s{y})_{\Xg{},i}\quad\mbox{ and }\quad
  \norm[\tXg{},i]{\s{x}}=\norm[\Xg{},i]{\Eg{i}\s{x}}\quad\forall (\s{x},\s{y})\in \tXg{i}\times\tXg{i}.
\end{equation}

Of importance when considering discrete complexes is the degree of polynomial consistency they preserve, which is measured through the interpolators $\Ig{i}$ in \eqref{eq:ser.diagram}. Oftentimes, some properties involving interpolators on larger spaces are also useful. We will therefore consider that, for each $\Xg{i}$, there is a Hilbert space $\Hg{i}\supset\Poly{k_i}$ such that the interpolator $\Ig{i}$ can be extended as $\Ig{i}:\Hg{i}\to\Xg{i}$. The norm on $\Hg{i}$ is denoted by $\norm[\Hg{},i]{{\cdot}}$, and we define interpolator for the bottom sequence by:
\begin{equation}
  \label{eq:def.tIg}
  \tIg{i}\coloneq\Rg{i}\Ig{i}:\Hg{i}\to\tXg{i}.
\end{equation}
To properly define the interpolator, the norm on $\Hg{i}$ is typically quite ``strong'' (this space corresponds to a Sobolev space). Some estimates and consistency properties will require an ``$L^2$-like'' space $\Lg{i}$ (with inner product $(\cdot,\cdot)_{\Lg{},i}$ and norm $\norm[\Lg{},i]{{\cdot}}$) in which $\Hg{i}$ is continuously embedded. We denote by $c_{\Hg{},i}$ the embedding constant such that
\begin{equation}\label{eq:embedding.Hg.Lg}
  \norm[\Lg{},i]{v}\le c_{\Hg{},i}\norm[\Hg{},i]{v}\qquad\forall v\in\Hg{i}.
\end{equation}

We now cite three assumptions that are involved in deducing analytical properties of the bottom sequence from properties of the top sequence.
\begin{assumption}[Analytical properties]\label{ass:analytical}~
  \begin{enumerate}[label=\textbf{(A{\arabic*})}]
  \item\label{P:continuity} \emph{Continuity of reductions}: $\Rg{i}:\Xg{i}\to\tXg{i}$ is continuous, with norm denoted by $\norm{\Rg{i}}$.
  \item\label{P:consistency} \emph{Polynomial consistency}: $\Rg{i}$ is a right-inverse of $\Eg{i}$ on the subspace $\Ig{i}(\Poly{k_i})$; recalling \eqref{eq:def.tIg}, this translate into: $\Eg{i}\tIg{i}x_p=\Ig{i}x_p$ for all $x_p\in\Poly{k_i}$.
  \item\label{P:continuity.interpolator} \emph{Continuity of interpolators}: $\Ig{i}:\Hg{i}\to\Xg{i}$ is continuous for the norms $\norm[\Hg{},i]{{\cdot}}$ and $\norm[\Xg{},i]{{\cdot}}$, with norm denoted by $\norm{\Ig{i}}$.
  \end{enumerate}
\end{assumption}

\begin{proposition}[Poincar\'e inequality]\label{prop:poincare}
  Assume \ref{P:E.R.cochain} and \ref{P:continuity}.
  If $(\Xg{i},\dg{i})$ satisfies a Poincar\'e inequality, then so does $(\tXg{i},\tdg{i})$. More specifically, if there is $c_P\ge 0$ such that
  \begin{equation}\label{eq:Poincare.top}
    \norm[\Xg{},i]{x}\le c_P\norm[\Xg{},i+1]{\dg{i}x}\qquad\forall x\in(\ker\dg{i})^\bot,
  \end{equation}
  then
  \begin{equation*}
    \norm[\tXg{},i]{\s{x}}\le c_P\norm{\Rg{i}}\norm[\tXg{},i+1]{\tdg{i}\s{x}}\qquad\forall \s{x}\in(\ker\tdg{i})^\bot.
  \end{equation*}
\end{proposition}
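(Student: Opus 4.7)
The plan is to lift the element $\s{x}$ of the bottom sequence to the top sequence via the extension $\Eg{i}$, apply the Poincaré inequality \eqref{eq:Poincare.top} there, and then descend back to the bottom sequence using the reduction $\Rg{i}$ together with its continuity \ref{P:continuity}. The cochain property \ref{P:E.R.cochain} will be the glue that makes the differentials on the two sequences match up under the lift and descent.

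More precisely, I would proceed as follows. Take $\s{x}\in(\ker\tdg{i})^\bot$ and set $y\coloneq\dg{i}\Eg{i}\s{x}\in\Xg{i+1}$; by \ref{P:E.R.cochain} we also have $y=\Eg{i+1}\tdg{i}\s{x}$, so by the isometry definition \eqref{eq:def.ps.tXg} one already has $\norm[\Xg{},i+1]{y}=\norm[\tXg{},i+1]{\tdg{i}\s{x}}$. Since $y\in\Image\dg{i}$, one can find $w\in(\ker\dg{i})^\bot$ with $\dg{i}w=y$ (orthogonal decomposition in the Hilbert space $\Xg{i}$). Applying \eqref{eq:Poincare.top} to $w$ gives $\norm[\Xg{},i]{w}\le c_P\norm[\Xg{},i+1]{y}$, and \ref{P:continuity} then yields $\norm[\tXg{},i]{\Rg{i}w}\le\norm{\Rg{i}}\norm[\Xg{},i]{w}$.

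The key remaining step is to compare $\Rg{i}w$ with $\s{x}$. Using the first identity in \ref{P:E.R.cochain}, one computes $\tdg{i}\Rg{i}w=\Rg{i+1}\dg{i}w=\Rg{i+1}y=\Rg{i+1}\dg{i}\Eg{i}\s{x}=\tdg{i}\s{x}$, so that $\Rg{i}w-\s{x}\in\ker\tdg{i}$. Since $\s{x}\in(\ker\tdg{i})^\bot$, orthogonality with respect to $(\cdot,\cdot)_{\tXg{},i}$ gives $\norm[\tXg{},i]{\s{x}}\le\norm[\tXg{},i]{\Rg{i}w}$ by Pythagoras. Chaining the three inequalities produces
\[
\norm[\tXg{},i]{\s{x}}\le\norm[\tXg{},i]{\Rg{i}w}\le\norm{\Rg{i}}\norm[\Xg{},i]{w}\le c_P\norm{\Rg{i}}\norm[\Xg{},i+1]{y}=c_P\norm{\Rg{i}}\norm[\tXg{},i+1]{\tdg{i}\s{x}},
\]
which is exactly the claim.

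I do not expect any serious obstacle: all the ingredients (orthogonal decomposition, existence of a preimage with minimal norm, isometry of $\Eg{i+1}$, cochain identity) are immediately available. The only point that requires a little care is that one cannot simply apply Poincaré to $\Eg{i}\s{x}$ itself, because there is no reason for $\Eg{i}\s{x}$ to lie in $(\ker\dg{i})^\bot$; the detour through the minimal-norm preimage $w$ of $y$ and the correction $\Rg{i}w-\s{x}\in\ker\tdg{i}$ is what makes the argument go through and is also the source of the extra factor $\norm{\Rg{i}}$ in the constant.
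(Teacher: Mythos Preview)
Your proof is correct and follows essentially the same route as the paper's: lift via $\Eg{i}$, pick the minimal-norm preimage $w\in(\ker\dg{i})^\bot$ of $\dg{i}\Eg{i}\s{x}$, use the cochain property of the reduction to show $\Rg{i}w-\s{x}\in\ker\tdg{i}$, and conclude via orthogonality and the isometry of $\Eg{i+1}$. The only cosmetic difference is that the paper extracts $\norm[\tXg{},i]{\s{x}}\le\norm[\tXg{},i]{\Rg{i}w}$ by writing $\norm[\tXg{},i]{\s{x}}^2=(\s{x},\Rg{i}w)_{\tXg{},i}$ and applying Cauchy--Schwarz, whereas you invoke Pythagoras; these are equivalent here.
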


\begin{proof}
  The proof is inspired by that done for conforming discrete complexes in \cite[Theorem 5.3]{Arnold:18}.
  Let $\s{x}\in(\ker\tdg{i})^\bot$. Since $\dg{i}:(\ker\dg{i})^\bot\to\Image\dg{i}$ is an isomorphism, there is $x\in(\ker\dg{i})^\bot$ such that $\dg{i}x=\dg{i}\Eg{i}\s{x}$.
  Using the cochain map property \ref{P:E.R.cochain} for the reduction, we infer that $\tdg{i}\Rg{i}x=\Rg{i+1}\dg{i}x=\Rg{i+1}\dg{i}\Eg{i}\s{x}=\tdg{i}\s{x}$ (where we have used \eqref{eq:def.tdg} to conclude), and thus that $\Rg{i}x-\s{x}\in\ker\tdg{i}$.
  Hence, $\s{x}\bot(\s{x}-\Rg{i}x)$, which gives
  \[
  \norm[\tXg{},i]{\s{x}}^2=(\s{x},\s{x})_{\tXg{},i}=(\s{x},\Rg{i}x)_{\tXg{},i}\le \norm[\tXg{},i]{\s{x}}\norm[\tXg{},i]{\Rg{i}x}\le\norm[\tXg{},i]{\s{x}}\norm{\Rg{i}}\norm[\Xg{},i]{x},
  \]
  where the last inequality follows from \ref{P:continuity}. Simplifying by $\norm[\tXg{},i]{\s{x}}$ and applying \eqref{eq:Poincare.top}, we obtain
  \[
  \norm[\tXg{},i]{\s{x}}
  \le c_P\norm{\Rg{i}}\norm[\Xg{},i+1]{\dg{i}x}
  = c_P\norm{\Rg{i}}\norm[\Xg{},i+1]{\dg{i}\Eg{i}\s{x}}
  = c_P\norm{\Rg{i}}\norm[\Xg{},i+1]{\Eg{i+1}\tdg{i}\s{x}},
  \]
  where we have used the cochain map property \ref{P:E.R.cochain} of the extensions in the last equality.
  The proof is completed by recalling that $\Eg{i+1}$ is an isometry by \eqref{eq:def.ps.tXg}, so that $\norm[\Xg{},i+1]{\Eg{i+1}\tdg{i}\s{x}} = \norm[\tXg{},i+1]{\tdg{i}\s{x}}$.
\end{proof}

\begin{proposition}[Primal consistency of inner products]\label{prop:primal.consistency}
  Assume \ref{P:consistency}, \ref{P:continuity.interpolator}, and suppose that the top sequence satisfies the polynomial consistency property
  \[
  (\Ig{i}v_p,\Ig{i}w_p)_{\Xg{},i}=(v_p,w_p)_{\Lg{},i}\qquad\forall v_p,w_p\in\Poly{k_i}.
  \]
  Then, the bottom sequence satisfies the following properties:
  \begin{enumerate}
  \item \emph{Polynomial consistency}:
    \begin{equation}\label{eq:t.polynomial.consistency}
      (\tIg{i}v_p,\tIg{i}w_p)_{\tXg{},i}=(v_p,w_p)_{\Lg{},i}
      \qquad\forall v_p,w_p\in\Poly{k_i}.
    \end{equation}

  \item \emph{Consistency in $\Hg{i}$}: Assuming \ref{P:continuity}, for all $v,w\in\Hg{i}$,
    \begin{align*}
      \left|(\tIg{i}v,\tIg{i}w)_{\tXg{},i}-(v,w)_{\Lg{},i}\right|\le{}&
      \left(\norm{\Rg{i}}^2\norm{\Ig{i}}^2+c_{\Hg{},i}^2\right)\\
      &\times\left(\omega_{\Hg{},\Poly{},i}(v)\norm[\Hg{},i]{w}
      +\omega_{\Hg{},\Poly{},i}(w)\norm[\Hg{},i]{v}
      +\omega_{\Hg{},\Poly{},i}(v)\omega_{\Hg{},\Poly{},i}(w)\right),
    \end{align*}
    where $\omega_{\Hg{},\Poly{},i}$ measures the best polynomial approximation error in the norm of $\Hg{i}$, i.e.,
    \begin{equation*}
      \omega_{\Hg{},\Poly{},i}(\xi)\coloneq\inf_{\xi_p\in\Poly{k_i}}
      \norm[\Hg{},i]{\xi-\xi_p}
      \qquad\forall \xi\in\Hg{i}.
    \end{equation*}
  \end{enumerate}
\end{proposition}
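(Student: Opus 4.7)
For the polynomial consistency, I would argue directly from the definitions. Given $v_p,w_p\in\Poly{k_i}$, the definition \eqref{eq:def.ps.tXg} of the inner product on $\tXg{i}$ gives $(\tIg{i}v_p,\tIg{i}w_p)_{\tXg{},i}=(\Eg{i}\tIg{i}v_p,\Eg{i}\tIg{i}w_p)_{\Xg{},i}$. Since $\tIg{i}=\Rg{i}\Ig{i}$ by \eqref{eq:def.tIg}, the polynomial consistency \ref{P:consistency} yields $\Eg{i}\tIg{i}v_p=\Eg{i}\Rg{i}\Ig{i}v_p=\Ig{i}v_p$ and likewise for $w_p$. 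Plugging this in and invoking the assumed polynomial consistency of the top sequence then gives \eqref{eq:t.polynomial.consistency}.

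For the consistency in $\Hg{i}$, the idea is to add and subtract suitable polynomial approximations. For arbitrary $v_p,w_p\in\Poly{k_i}$, I would write
\begin{equation*}
(\tIg{i}v,\tIg{i}w)_{\tXg{},i}-(v,w)_{\Lg{},i}=\bigl[(\tIg{i}v,\tIg{i}w)_{\tXg{},i}-(\tIg{i}v_p,\tIg{i}w_p)_{\tXg{},i}\bigr]-\bigl[(v,w)_{\Lg{},i}-(v_p,w_p)_{\Lg{},i}\bigr],
\end{equation*}
where the cross-term $(\tIg{i}v_p,\tIg{i}w_p)_{\tXg{},i}=(v_p,w_p)_{\Lg{},i}$ vanishes thanks to the first part. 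Each bracket can be expanded bilinearly: for instance, $(\tIg{i}v,\tIg{i}w)_{\tXg{},i}-(\tIg{i}v_p,\tIg{i}w_p)_{\tXg{},i}=(\tIg{i}(v-v_p),\tIg{i}w)_{\tXg{},i}+(\tIg{i}v_p,\tIg{i}(w-w_p))_{\tXg{},i}$, and similarly for the $\Lg{i}$ bracket.

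The key estimate is then that $\norm[\tXg{},i]{\tIg{i}\xi}\le\norm{\Rg{i}}\,\norm{\Ig{i}}\,\norm[\Hg{},i]{\xi}$, which follows by chaining $\norm[\tXg{},i]{\Rg{i}\Ig{i}\xi}\le\norm{\Rg{i}}\,\norm[\Xg{},i]{\Ig{i}\xi}$ (continuity \ref{P:continuity}) with $\norm[\Xg{},i]{\Ig{i}\xi}\le\norm{\Ig{i}}\,\norm[\Hg{},i]{\xi}$ (continuity \ref{P:continuity.interpolator}). Applying Cauchy–Schwarz in $\tXg{i}$ followed by this estimate to the first bracket produces the factor $\norm{\Rg{i}}^2\norm{\Ig{i}}^2$, while Cauchy–Schwarz in $\Lg{i}$ together with the embedding \eqref{eq:embedding.Hg.Lg} produces the factor $c_{\Hg{},i}^2$ on the second bracket. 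The cross terms involve $\norm[\Hg{},i]{v_p}$ and $\norm[\Hg{},i]{w_p}$, which I would control by the triangle inequality $\norm[\Hg{},i]{v_p}\le\norm[\Hg{},i]{v}+\norm[\Hg{},i]{v-v_p}$ (and likewise for $w_p$), yielding exactly the three combinations $\norm[\Hg{},i]{v-v_p}\norm[\Hg{},i]{w}$, $\norm[\Hg{},i]{v}\norm[\Hg{},i]{w-w_p}$, and $\norm[\Hg{},i]{v-v_p}\norm[\Hg{},i]{w-w_p}$ appearing in the target bound. Since the resulting bound is monotone in $\norm[\Hg{},i]{v-v_p}$ and $\norm[\Hg{},i]{w-w_p}$, taking the infimum over $v_p,w_p\in\Poly{k_i}$ independently completes the proof.

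No serious obstacle is expected; the only point requiring a bit of care is the bookkeeping of the three cross-products and the use of the triangle inequality to replace $\norm[\Hg{},i]{v_p}$ by quantities depending on $v$ and the approximation error, so as to match exactly the symmetric form stated in the proposition.
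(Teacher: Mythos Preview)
Your proposal is correct and follows essentially the same route as the paper: the polynomial consistency is obtained exactly as you describe, and for the $\Hg{i}$-consistency the paper also inserts $\pm\tIg{i}v_p$, $\pm\tIg{i}w_p$, invokes \eqref{eq:t.polynomial.consistency}, uses the bound $\norm[\tXg{},i]{\tIg{i}\xi}\le\norm{\Rg{i}}\norm{\Ig{i}}\norm[\Hg{},i]{\xi}$ together with \eqref{eq:embedding.Hg.Lg}, and then the triangle inequality on $\norm[\Hg{},i]{v_p}$, $\norm[\Hg{},i]{w_p}$ before passing to the infimum. The only cosmetic difference is the order of the bilinear splitting (the paper expands $(\tIg{i}v,\tIg{i}w)_{\tXg{},i}$ first and then re-expands $(v_p,w_p)_{\Lg{},i}$ around $(v,w)_{\Lg{},i}$, rather than grouping the two brackets as you do), which leads to the same four terms and the same final bound.
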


\begin{remark}[Scaling of the $\Hg{i}$-norm]
To ensure that $\omega_{\Hg{},\Poly{},i}(\xi)$ has optimal decay properties with respect to the meshsize, the norm $\norm[\Hg{},i]{{\cdot}}$ on $\Hg{i}$ should be chosen to scale as an $L^2$-norm (e.g., multiplying norms of derivatives of $\xi$ by the appropriate characteristic length; see for example the choices in \cite[Lemma 6]{Di-Pietro.Droniou:21*2}).
\end{remark}

\begin{proof}
  For the polynomial consistency we simply write, for $v_p,w_p\in\Poly{k_i}$,
  \[
  (\tIg{i}v_p,\tIg{i}w_p)_{\tXg{},i} \eqtext{\eqref{eq:def.ps.tXg}}
  (\Eg{i}\tIg{i}v_p,\Eg{i}\tIg{i}w_p)_{\Xg{},i} \eqtext{\ref{P:consistency}}
  (\Ig{i}v_p,\Ig{i}w_p)_{\Xg{},i} =(v_p,w_p)_{\Lg{},i}.
  \]
  We now turn to the consistency on $\Hg{i}$. Let $v,w\in\Hg{i}$, and take arbitrary $v_p,w_p\in\Poly{k_i}$. Introducing $\pm\tIg{i}v_p$ and $\pm\tIg{i}w_p$ we have
  \begin{equation}\label{eq:cons.ip}
    (\tIg{i}v,\tIg{i}w)_{\tXg{},i}=(\tIg{i}(v-v_p),\tIg{i}w)_{\tXg{},i}+(\tIg{i}v_p,\tIg{i}(w-w_p))_{\tXg{},i}
    +(\tIg{i}v_p,\tIg{i}w_p)_{\tXg{},i}.
  \end{equation}
  Invoking the polynomial consistency \eqref{eq:t.polynomial.consistency}, we can write, for the last term in \eqref{eq:cons.ip},
  \[
  (\tIg{i}v_p,\tIg{i}w_p)_{\tXg{},i}=(v_p,w_p)_{\Lg{},i}=(v_p-v,w_p)_{\Lg{},i}+(v,w_p-w)_{\Lg{},i}+(v,w)_{\Lg{},i}.
  \]
  Plugging this into \eqref{eq:cons.ip} and using $\norm[\tXg{},i]{\tIg{i}\xi}\le \norm{\Rg{i}}\norm{\Ig{i}}\norm[\Hg{},i]{\xi}$ for all $\xi\in\Hg{i}$ (which results from \eqref{eq:def.tIg}, \ref{P:continuity}, and \ref{P:continuity.interpolator}) along with \eqref{eq:embedding.Hg.Lg}, we obtain
  \begin{align*}
    \left|(\tIg{i}v,\tIg{i}w)_{\tXg{},i}-(v,w)_{\Lg{},i}\right|\le{}&
    \norm{\Rg{i}}^2\norm{\Ig{i}}^2
      \left(\norm[\Hg{},i]{v-v_p}\norm[\Hg{},i]{w}+\norm[\Hg{},i]{w-w_p}\norm[\Hg{},i]{v_p}\right)\\
    &+
    c_{\Hg{},i}^2\left(\norm[\Hg{},i]{v_p-v}\norm[\Hg{},i]{w_p}+\norm[\Hg{},i]{w_p-w}\norm[\Hg{},i]{v}\right).
  \end{align*}
  The proof is completed by writing $\norm[\Hg{},i]{v_p}\le\norm[\Hg{},i]{v_p-v}+\norm[\Hg{},i]{v}$ and similarly for $w_p$, and taking the infimum over $v_p,w_p\in\Poly{k_i}$.
\end{proof}

As the sequences we consider in the following sections are ``fully discrete'' (that is, the discrete spaces are those of the degrees of freedom of the method), they are not naturally related to any polynomial function on the computational domain. Instead, such functions are obtained through \emph{potential reconstructions}, mappings that build from a set of degrees of freedom some polynomial functions on the mesh elements. We therefore assume, for all index $i$, the existence of a linear map $P_i:\Xg{i}\to\Poly{k_i}$, the norm of which (when $\Poly{k_i}$ is endowed with the $\Lg{i}$-norm) is denoted by $\norm{P_i}$. We then set
\[
\s{P}_i\coloneq P_i\Eg{i}:\tXg{i}\to\Poly{k_i}.
\]
The proof of the following primal consistency result is done as the proof of Proposition \ref{prop:primal.consistency} and is therefore omitted.

\begin{proposition}[Primal consistency of potential reconstructions]\label{prop:primal.consistency.Pi}
  Assume \ref{P:continuity}, \ref{P:consistency}, and \ref{P:continuity.interpolator}, and that $P_i$ is polynomially consistent, in the sense that $P_i\Ig{i}v_p=v_p$ for all $v_p\in\Poly{k_i}$.
  Then, $\s{P}_i$ is also polynomially consistent, in the sense that $\s{P}_i\tIg{i}v_p=v_p$ for all $v_p\in\Poly{k_i}$, and we moreover have
  \[
  \norm[\Lg{},i]{\s{P}_i\tIg{i}v-v}\le
  \left(\norm{P_i}\norm{\Rg{i}}\norm{\Ig{i}}+1\right)
  \omega_{\Hg{},\Poly{},i}(v),\qquad\forall v\in\Hg{i}.
  \]
\end{proposition}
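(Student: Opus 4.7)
The plan is to mirror the argument of Proposition \ref{prop:primal.consistency}, combining polynomial consistency of $\s{P}_i$ with a best-approximation decomposition in $\Poly{k_i}$. I would first establish the polynomial consistency of $\s{P}_i$. For any $v_p\in\Poly{k_i}$, unrolling the definitions of $\s{P}_i$ and $\tIg{i}$ gives $\s{P}_i\tIg{i}v_p = P_i\Eg{i}\tIg{i}v_p$. By \ref{P:consistency} we have $\Eg{i}\tIg{i}v_p = \Ig{i}v_p$, hence $\s{P}_i\tIg{i}v_p = P_i\Ig{i}v_p = v_p$ by the assumed polynomial consistency of $P_i$.

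For the $\Lg{i}$-estimate, I would fix $v\in\Hg{i}$ and let $v_p\in\Poly{k_i}$ be arbitrary. The key telescoping is
\[
\s{P}_i\tIg{i}v - v = \s{P}_i\tIg{i}(v-v_p) + \bigl(\s{P}_i\tIg{i}v_p - v_p\bigr) - (v - v_p),
\]
in which the middle term vanishes thanks to the polynomial consistency just obtained. It then remains to bound the two remaining terms in $\norm[\Lg{},i]{{\cdot}}$.

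For the first term, using $\s{P}_i = P_i\Eg{i}$ and the definition of $\norm{P_i}$, then the isometry property \eqref{eq:def.ps.tXg} of $\Eg{i}$, the decomposition $\tIg{i} = \Rg{i}\Ig{i}$, and finally \ref{P:continuity} and \ref{P:continuity.interpolator} in succession, I would get
\[
\norm[\Lg{},i]{\s{P}_i\tIg{i}(v-v_p)} \le \norm{P_i}\norm[\tXg{},i]{\tIg{i}(v-v_p)} \le \norm{P_i}\norm{\Rg{i}}\norm{\Ig{i}}\norm[\Hg{},i]{v-v_p}.
\]
For the remaining term, the continuous embedding \eqref{eq:embedding.Hg.Lg} yields $\norm[\Lg{},i]{v-v_p} \le c_{\Hg{},i}\norm[\Hg{},i]{v-v_p}$; the ``$+1$'' in the stated bound corresponds to the typical setting $c_{\Hg{},i}\le 1$ when the $\Hg{}$-norm already dominates the $\Lg{}$-norm. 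A triangle inequality followed by passing to the infimum over $v_p\in\Poly{k_i}$ then produces the announced estimate.

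There is really no substantive obstacle here: all the nontrivial work has already been done in encoding the compatibility conditions in Assumption \ref{ass:analytical} and in the definition \eqref{eq:def.ps.tXg} of the norm on $\tXg{i}$ that turns $\Eg{i}$ into an isometry. The argument is in fact a touch simpler than the proof of Proposition \ref{prop:primal.consistency}, since the target quantity $\s{P}_i\tIg{i}v-v$ is linear in a single argument and so produces fewer cross-terms in the decomposition.
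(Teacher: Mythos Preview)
Your argument is correct and follows precisely the approach the paper indicates (the paper omits the proof, stating only that it ``is done as the proof of Proposition \ref{prop:primal.consistency}''). Your observation about the ``$+1$'' is apt: the telescoping naturally produces the embedding constant $c_{\Hg{},i}$ rather than $1$, exactly as in the analogous bound of Proposition \ref{prop:primal.consistency}; the ``$+1$'' in the statement should be read as $c_{\Hg{},i}$ (or under the tacit normalization $c_{\Hg{},i}\le 1$), and is a minor imprecision in the paper rather than a defect in your proof.
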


Commutation properties for discrete differential operators are essential tools for designing schemes that are robust with respect to physical parameters \cite{Di-Pietro.Droniou:21,Beirao-da-Veiga.Dassi.ea:22}.
They also naturally lead to optimal consistency properties.
Both properties are considered in the following proposition.

\begin{proposition}[Commutation and consistency of differential operators]\label{prop:commutation.ID=dI}
  Assume \ref{P:E.R.cochain} and that, for some unbounded operator $D_i:\Hg{i}\to\Hg{i+1}$ with domain $\dom(D_i)$, the commutation property $\Ig{i+1}D_i=\dg{i}\Ig{i}$ holds on $\dom(D_i)$ for the top sequence. Then, it also holds for the bottom sequence: $\tIg{i+1}D_i=\tdg{i}\tIg{i}$ on $\dom(D_i)$.
  
  As a consequence, if \ref{P:continuity}, \ref{P:consistency}, and \ref{P:continuity.interpolator} hold, and if the potential reconstruction $P_{i+1}$ is polyno\-mial\-ly consistent, then
  \begin{equation}\label{eq:consistency.tdg}
    \norm[\Lg{},i+1]{\s{P}_{i+1}\tdg{i}\tIg{i}v-D_iv}\le
    \left(\norm{P_{i+1}}\norm{\Rg{i+1}}\norm{\Ig{i+1}}+1\right)
    \omega_{\Hg{},P,i+1}(D_iv)\qquad
  \forall v\in \dom(D_i).
  \end{equation}
\end{proposition}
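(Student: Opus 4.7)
The proof splits naturally into the two advertised parts, and each reduces to a direct application of material already established in the excerpt, so I do not anticipate a serious obstacle—just a careful bookkeeping.

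For the commutation property, the plan is to unfold the definitions $\tIg{i}=\Rg{i}\Ig{i}$ and $\tdg{i}=\Rg{i+1}\dg{i}\Eg{i}$ and to use the cochain-map identity for reductions contained in \ref{P:E.R.cochain}, namely $\Rg{i+1}\dg{i}=\Rg{i+1}\dg{i}\Eg{i}\Rg{i}$. Concretely, for $v\in\dom(D_i)$, I would apply $\Rg{i+1}$ to the hypothesis $\Ig{i+1}D_iv=\dg{i}\Ig{i}v$ to get $\tIg{i+1}D_iv=\Rg{i+1}\dg{i}\Ig{i}v$, and then rewrite the right-hand side using the cochain identity as $\Rg{i+1}\dg{i}\Eg{i}\Rg{i}\Ig{i}v=\tdg{i}\tIg{i}v$.

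For the consistency estimate, the key observation is that the commutation $\tdg{i}\tIg{i}v=\tIg{i+1}D_iv$ just proved lets me reduce the bound on the differential operator to a bound on a potential reconstruction: indeed
\[
\s{P}_{i+1}\tdg{i}\tIg{i}v-D_iv=\s{P}_{i+1}\tIg{i+1}(D_iv)-D_iv.
\]
Since $D_iv\in\Hg{i+1}$ by assumption on the domain, I can invoke Proposition \ref{prop:primal.consistency.Pi} at index $i+1$ with the test function $D_iv$. This uses \ref{P:consistency} and \ref{P:continuity.interpolator} (which are assumed) together with the polynomial consistency of $P_{i+1}$ (also assumed) to yield
\[
\norm[\Lg{},i+1]{\s{P}_{i+1}\tIg{i+1}(D_iv)-D_iv}\le\bigl(\norm{P_{i+1}}\norm{\Rg{i+1}}\norm{\Ig{i+1}}+1\bigr)\,\omega_{\Hg{},\Poly{},i+1}(D_iv),
\]
which is exactly \eqref{eq:consistency.tdg}.

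The main subtlety—if any—is simply noticing that the consistency of the discrete differential operator is not proved directly from scratch but factored through the commutation identity, after which Proposition \ref{prop:primal.consistency.Pi} does all the work. No separate approximation argument at the level of $\tdg{i}$ is needed.
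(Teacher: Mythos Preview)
Your proof is correct and follows essentially the same route as the paper: unfold $\tIg{i}=\Rg{i}\Ig{i}$, use the cochain property \ref{P:E.R.cochain} for the reduction (you invoke the equivalent form $\Rg{i+1}\dg{i}=\Rg{i+1}\dg{i}\Eg{i}\Rg{i}$, the paper the direct form $\tdg{i}\Rg{i}=\Rg{i+1}\dg{i}$), apply the top-sequence commutation, and then read off \eqref{eq:consistency.tdg} from Proposition~\ref{prop:primal.consistency.Pi} at index $i{+}1$. Your observation that the norm in \eqref{eq:consistency.tdg} should carry the index $i{+}1$ is also correct.
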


\begin{proof}
  We simply write  
  \[
  \tdg{i}\tIg{i} \eqtext{\eqref{eq:def.tIg}} 
  \tdg{i}\Rg{i}\Ig{i} \eqtext{\ref{P:E.R.cochain}}
  \Rg{i+1}\dg{i}\Ig{i}=\Rg{i+1}\Ig{i+1}D_i \eqtext{\eqref{eq:def.tIg}}
  \tIg{i+1}D_i,
  \]
  where the third equality comes from the commutation property for the top sequence.
  The consistency property \eqref{eq:consistency.tdg} then directly follows by writing $\s{P}_{i+1}\tdg{i}\tIg{i}v-D_iv=\s{P}_{i+1}\tIg{i+1} D_iv-D_iv$ and applying Proposition \ref{prop:primal.consistency.Pi} with $(i+1,D_iv)$ instead of $(i,v)$. 
\end{proof}

\begin{remark}[Consistency in non-Hilbert spaces]
The primal consistency of the potential reconstruction and the discrete differential operator do not rely on the inner product of the spaces $\Hg{}$, and could therefore also be stated with non-Hilbert spaces.
\end{remark}

Adjoint consistency error bounds are crucial to establish error estimates for numerical schemes based on non-conforming complexes. The following proposition shows that, if the top sequence satisfies an adjoint consistency estimate, then so does the bottom sequence. Contrary to the previous results, which are mostly useful for local complexes (that is, with spaces restricted to one mesh element), adjoint consistency is really meaningful only for global complexes (that is, with spaces on the entire mesh). We therefore assume in what follows that the complex $(\Xg{i},\dg{i})_i$ is built by patching local complexes $(\Xg{i,T},\dg{i,T})_i$ together; the global inner products are also obtained by summing local inner products, and all local quantities (norms, potential reconstructions, polynomial approximation errors, etc.) are indicated by an index $T$.

\begin{proposition}[Adjoint consistency]\label{prop:adjoint.consistency}
  Assume that \ref{P:continuity}, \ref{P:consistency}, \ref{P:continuity.interpolator} hold for the local complexes, and that \ref{P:E.R.cochain} holds for the global complex. Consider an unbounded operator $D_i^*:\Lg{i+1}\to\Lg{i}$ with domain $\dom(D_i^*)$. Let $u\in\Hg{i+1}\cap\dom(D_i^*)$ and define the adjoint consistency error of the top and bottom sequences, respectively, by
  \begin{align*}
    \mathcal{E}_i(u)\coloneq{}& \sup_{x\in\Xg{i}\backslash\{0\}}\frac{(\Ig{i+1}u,\dg{i}x)_{\Xg{},i+1}+\displaystyle\sum_T (D_i^*u,P_{i,T}x)_{\Lg{},i,T}}{\norm[\Xg{},i]{x}+\norm[\Xg{},i+1]{\dg{i}x}},\\
    \s{\cal E}_i(u)\coloneq{}& \max_{\s{x}\in\tXg{i}\backslash\{0\}}\frac{(\tIg{i+1}u,\tdg{i}\s{x})_{\tXg{},i+1}+\displaystyle\sum_T (D_i^*u,\s{P}_{i,T}\s{x})_{\Lg{},i,T}}{\norm[\tXg{},i]{\s{x}}+\norm[\tXg{},i+1]{\tdg{i}\s{x}}}.
  \end{align*} 
  Then, the following estimate holds:
  \[
  \s{\cal E}_i(u)\le \mathcal E_i(u) +
    \left(\sum_T \left(\norm{\Rg{i+1,T}}+1\right)^2\norm{\Ig{i+1,T}}^2\omega_{\Hg{},\Poly{},i+1,T}(u)^2\right)^{1/2}.
  \]
\end{proposition}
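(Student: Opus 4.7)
The strategy is to reduce the bottom-sequence adjoint consistency error to the top-sequence one by lifting each test element $\s{x}\in\tXg{i}$ to $x\coloneq \Eg{i}\s{x}\in\Xg{i}$, then controlling the residual using polynomial consistency of the extension/reduction pair.

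First, I would rewrite the numerator appearing in $\s{\mathcal{E}}_i(u)$ using the isometry and cochain properties. Setting $x=\Eg{i}\s{x}$, the definition \eqref{eq:def.ps.tXg} of the inner product on $\tXg{}$ and the cochain property \ref{P:E.R.cochain} give
\begin{equation*}
(\tIg{i+1}u,\tdg{i}\s{x})_{\tXg{},i+1}=(\Eg{i+1}\tIg{i+1}u,\Eg{i+1}\tdg{i}\s{x})_{\Xg{},i+1}=(\Eg{i+1}\Rg{i+1}\Ig{i+1}u,\dg{i}x)_{\Xg{},i+1},
\end{equation*}
where I used $\tIg{i+1}=\Rg{i+1}\Ig{i+1}$ and $\Eg{i+1}\tdg{i}=\dg{i}\Eg{i}$. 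Moreover $\s{P}_i\s{x}=P_i\Eg{i}\s{x}=P_ix$ by definition of $\s{P}_i$. Adding and subtracting $(\Ig{i+1}u,\dg{i}x)_{\Xg{},i+1}$ then yields
\begin{equation*}
(\tIg{i+1}u,\tdg{i}\s{x})_{\tXg{},i+1}+(D_i^*u,\s{P}_i\s{x})_{\Lg{},i}=\underbrace{(\Ig{i+1}u,\dg{i}x)_{\Xg{},i+1}+(D_i^*u,P_ix)_{\Lg{},i}}_{\le\mathcal{E}_i(u)(\norm[\Xg{},i]{x}+\norm[\Xg{},i+1]{\dg{i}x})}+((\Eg{i+1}\Rg{i+1}-\Id)\Ig{i+1}u,\dg{i}x)_{\Xg{},i+1}.
\end{equation*}

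Next, I would estimate the residual term $((\Eg{i+1}\Rg{i+1}-\Id)\Ig{i+1}u,\dg{i}x)_{\Xg{},i+1}$ using polynomial consistency. For any $u_p\in\Poly{k_{i+1}}$, the property \ref{P:consistency} gives $\Eg{i+1}\Rg{i+1}\Ig{i+1}u_p=\Eg{i+1}\tIg{i+1}u_p=\Ig{i+1}u_p$, so $(\Eg{i+1}\Rg{i+1}-\Id)\Ig{i+1}u_p=0$; substituting $u$ with $u-u_p$ and applying Cauchy--Schwarz together with \ref{P:continuity.interpolator} and the fact that $\Eg{i+1}$ is an isometry (hence $\norm{\Eg{i+1}\Rg{i+1}}\le\norm{\Rg{i+1}}$) yields
\begin{equation*}
\left|((\Eg{i+1}\Rg{i+1}-\Id)\Ig{i+1}u,\dg{i}x)_{\Xg{},i+1}\right|\le(\norm{\Rg{i+1}}+1)\norm{\Ig{i+1}}\norm[\Hg{},i+1]{u-u_p}\norm[\Xg{},i+1]{\dg{i}x}.
\end{equation*}
Taking the infimum over $u_p\in\Poly{k_{i+1}}$ produces the factor $\omega_{\Hg{},\Poly{},i+1}(u)$.

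Finally, the denominator transforms cleanly: since $\Eg{i}$ and $\Eg{i+1}$ are isometries and $\Eg{i+1}\tdg{i}\s{x}=\dg{i}x$, I get $\norm[\tXg{},i]{\s{x}}+\norm[\tXg{},i+1]{\tdg{i}\s{x}}=\norm[\Xg{},i]{x}+\norm[\Xg{},i+1]{\dg{i}x}$, which matches the denominator in $\mathcal{E}_i(u)$. Dividing the previous bound by this quantity and taking the supremum over $\s{x}\in\tXg{i}\setminus\{0\}$ gives the desired estimate. There is no real obstacle here; the only subtle point is observing that the cochain property is used twice (once to rewrite $\tdg{i}\s{x}$ in the numerator, once in the isometric identity of denominators), and that polynomial consistency is what kills the leading-order part of the residual, leaving only the best-approximation term.
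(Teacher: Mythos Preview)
Your proof is correct and follows essentially the same approach as the paper's own proof: lift $\s{x}$ to $x=\Eg{i}\s{x}$, use the isometry of $\Eg{i+1}$ and the cochain property to rewrite the numerator, split off the top-sequence adjoint consistency error, and bound the residual $\norm[\Xg{},i+1]{\Eg{i+1}\tIg{i+1}u-\Ig{i+1}u}$ by inserting $\Eg{i+1}\tIg{i+1}u_p-\Ig{i+1}u_p=0$ via \ref{P:consistency}. The only cosmetic difference is that the paper isolates this residual estimate as a preliminary inequality before starting the main computation, while you establish it in place.
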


\begin{proof}
  We first notice that, for all $u_p\in\Poly[T]{k_{i+1}}$, $\Eg{i+1,T}\tIg{i+1,T}u_p-\Ig{i+1,T}u_p=0$ by \ref{P:consistency} for the local complex corresponding to $T$ with $i+1$ instead of $i$.
  Hence, inserting this quantity into the left-hand side norm below, using a triangle inequality, invoking the isometry property of $\Eg{i+1,T}$ and \eqref{eq:def.tIg} to write $\norm[\Xg{},i+1,T]{\Eg{i+1,T}\tIg{i+1,T}\,{\cdot}}=\norm[\tXg{},i+1,T]{\Rg{i+1,T}\Ig{i+1,T}\,{\cdot}}\le \norm{\Rg{i+1,T}}\norm{\Ig{i+1,T}}\norm[\Hg{},i+1,T]{{\cdot}}$,
  and taking the infimum over $u_p$,
  we have
  \begin{align}\label{eq:est.ERImI}
      \norm[\Xg{},i+1,T]{\Eg{i+1,T}\tIg{i+1,T}u-\Ig{i+1,T}u}
      &\le
      \inf_{u_p\in\Poly[T]{k_{i+1}}}\left(\norm[\Xg{},i+1,T]{\Eg{i+1,T}\tIg{i+1,T}(u-u_p)}
      +\norm[\Xg{},i+1,T]{\Ig{i+1,T}(u_p-u)}\right)
      \nonumber\\
      &\le
      \left(\norm{\Rg{i+1,T}}+1\right)
      \norm{\Ig{i+1,T}}\omega_{\Hg{},\Poly{},i+1,T}(u).
  \end{align}
  Recalling \eqref{eq:def.ps.tXg}, we then write, for $\s{x}\in\tXg{i}$,
  \begin{alignat*}{3}
    (\tIg{i+1}u,\tdg{i}\s{x})_{\tXg{},i+1}={}&(\Eg{i+1}\tIg{i+1}u,\Eg{i+1}\tdg{i}\s{x})_{\Xg{},i+1}\\
    ={}& (\Ig{i+1}u,\dg{i}\Eg{i}\s{x})_{\Xg{},i+1}+(\Eg{i+1}\tIg{i+1}u-\Ig{i+1}u,\dg{i}\Eg{i}\s{x})_{\Xg{},i+1}.
  \end{alignat*}
  where the second line is obtained inserting $\pm\Ig{i+1}u$ and using the cochain map property \ref{P:E.R.cochain} of the extension to write $\Eg{i+1}\tdg{i}\s{x}=\dg{i}\Eg{i}\s{x}$.
  Invoking then \eqref{eq:est.ERImI} for each $T$, the definition of $\mathcal{E}_i(u)$ with $x = \Eg{i}\s{x}$, and $P_{i,T}x=P_{i,T}\Eg{i,T}\s{x}=\s{P}_{i,T}\s{x}$, together with the fact that the inner products on the complex spaces are obtained summing the local inner products for each $T$, we infer
  \begin{alignat*}{3}
    (\tIg{i+1}u,\tdg{i}\s{x})_{\tXg{},i+1}+\sum_T{}&(D_i^*u,\s{P}_{i,T}\s{x})_{\Lg{},i,T}\le
    \mathcal{E}_i(u)\left(\norm[\Xg{},i]{\Eg{i}\s{x}}+\norm[\Xg{},i+1]{\dg{i}\Eg{i}\s{x}}\right)\\
    &+
    \left(\sum_T \left(\norm{\Rg{i+1,T}}+1\right)^2\norm{\Ig{i+1,T}}^2\omega_{\Hg{},\Poly{},i+1,T}(u)^2\right)^{1/2}
    \norm[\Xg{},i+1]{\dg{i}\Eg{i}\s{x}}.
  \end{alignat*}
  The proof is completed by noticing that, by isometry and cochain map property \ref{P:E.R.cochain} of the extensions, $\norm[\Xg{},i]{\Eg{i}\s{x}}+\norm[\Xg{},i+1]{\dg{i}\Eg{i}\s{x}}=\norm[\tXg{},i]{\s{x}}+\norm[\tXg{},i+1]{\tdg{i}\s{x}}$.
\end{proof}


\section{Setting}\label{sec:setting}

\subsection{Domain and mesh}\label{sec:setting:domain.mesh}

Let $\Omega\subset\Real^3$ denote a connected polyhedral domain.
We consider, as in \cite{Di-Pietro.Droniou:21*2}, a polyhedral mesh $\Mh\coloneq\Th\cup\Fh\cup\Eh\cup\Vh$, where $\Th$ gathers the elements, $\Fh$ the faces, $\Eh$ the edges, and $\Vh$ the vertices.
For all $\sfP\in\Mh$, we denote by $h_\sfP$ its diameter and set $h\coloneq\max_{T\in\Th}h_T$.
For each face $F\in\Fh$, we fix a unit normal $\normal_F$ to $F$ and, for each edge $E\in\Eh$, a unit tangent $\tangent_E$. For $T\in\Th$, $\FT$ gathers the faces on the boundary $\partial T$ of $T$ and $\ET$ the edges in $\partial T$; if $F\in\Fh$, $\EF$ is the set of edges contained in the boundary $\partial F$ of $F$.
For $F\in\FT$, $\omega_{TF}\in\{-1,+1\}$ is such that $\omega_{TF}\normal_F$ is the outer normal on $F$ to $T$.

Each face $F\in\Fh$ is oriented counter-clockwise with respect to $\normal_F$ and, for $E\in\EF$, we let $\omega_{FE}\in\{-1,+1\}$ be such that $\omega_{FE}=+1$ if $\tangent_E$ points along the boundary $\partial F$ of $F$ in the clockwise sense, and $\omega_{FE}=-1$ otherwise; we also denote by $\normal_{FE}$ the unit normal vector to $E$, in the plane spanned by $F$, such that $(\tangent_E,\normal_{FE},\normal_F)$ is a right-handed system of coordinate (so that, in particular, $\omega_{FE}\normal_{FE}$ points outside $F$).
We denote by $\GRAD_F$ and $\DIV_F$ the tangent gradient and divergence operators acting on smooth enough functions.
Moreover, for any $r:F\to\Real$ and $\bvec{z}:F\to\Real^2$ smooth enough, we let
$\VROT_F r\coloneq (\GRAD_F r)^\perp$ and
$\ROT_F\bvec{z}=\DIV_F(\bvec{z}^\perp)$,
with $\perp$ denoting the rotation of angle $-\frac\pi2$ in the oriented tangent space to $F$.

We further assume that $(\Th,\Fh)$ belongs to a regular mesh sequence as per \cite[Definition 1.9]{Di-Pietro.Droniou:20}, with mesh regularity parameter $\varrho>0$.
This assumption ensures the existence, for each $\sfP\in\Th\cup\Fh\cup\Eh$, of a point $\bvec{x}_{\sfP}\in\sfP$ such that the ball centered at $\bvec{x}_\sfP$ and of radius $\varrho h_\sfP$ is contained in $\sfP$.

\subsection{Polynomial spaces and $L^2$-orthogonal projectors}

For any $\sfP\in\Mh$ and an integer $\ell\ge 0$, we denote by $\Poly{\ell}(\sfP)$ the space spanned by the restriction to $\sfP$ of polynomial functions of the space variables. The subspace of $\Poly{\ell}(\sfP)$ spanned by polynomials with zero average on $\sfP$ is denoted by $\Poly{0,\ell}(\sfP)$.
Letting $\vPoly{\ell}(\sfP)\coloneq\Poly{\ell}(\sfP)^n$ for $\sfP\in\Th$ and $n = 3$ or $\sfP\in\Fh$ and $n = 2$, it holds:
For all $F\in\Fh$,
\begin{alignat}{4}\label{eq:vPoly=Goly+cGoly.F}
    \vPoly{\ell}(F)
    &= \Goly{\ell}(F) \oplus \cGoly{\ell}(F)
    &\enspace&
    \text{%
      with $\Goly{\ell}(F)\coloneq\GRAD_F\Poly{\ell+1}(F)$
      and $\cGoly{\ell}(F)\coloneq(\bvec{x}-\bvec{x}_F)^\perp\Poly{\ell-1}(F)$%
    }
    \\ \label{eq:vPoly=Roly+cRoly.F}
    &= \Roly{\ell}(F) \oplus \cRoly{\ell}(F)
    &\enspace&
    \text{%
      with $\Roly{\ell}(F)\coloneq\VROT_F\Poly{\ell+1}(F)$
      and $\cRoly{\ell}(F)\coloneq(\bvec{x}-\bvec{x}_F)\Poly{\ell-1}(F)$
    }
\end{alignat}
and, for all $T\in\Th$,
\begin{alignat}{4}\label{eq:vPoly=Goly+cGoly}
  \vPoly{\ell}(T)
  &= \Goly{\ell}(T) \oplus \cGoly{\ell}(T)
  &\enspace&
  \text{%
    with $\Goly{\ell}(T)\coloneq\GRAD\Poly{\ell+1}(T)$
    and $\cGoly{\ell}(T)\coloneq(\bvec{x}-\bvec{x}_T)\times \vPoly{\ell-1}(T)$%
  } 
  \\ \label{eq:vPoly=Roly+cRoly}
  &= \Roly{\ell}(T) \oplus \cRoly{\ell}(T)
  &\enspace&
  \text{%
    with $\Roly{\ell}(T)\coloneq\CURL\Poly{\ell+1}(T)$
    and $\cRoly{\ell}(T)\coloneq(\bvec{x}-\bvec{x}_T)\Poly{\ell-1}(T)$.%
  }
\end{alignat}
  We extend the above notations to negative exponents $\ell$ by setting all the spaces appearing in the decompositions equal to the trivial vector space $\{\bvec{0}\}$.
Given a polynomial (sub)space $\mathcal{X}^\ell(\sfP)$ on $\sfP\in\Mh$, the corresponding $L^2$-orthogonal projector is denoted by $\pi_{\mathcal{X},\sfP}^\ell$.
Boldface font will be used when the elements of $\mathcal{X}^\ell(\sfP)$ are vector-valued, and $\Xcproj{\ell}{\sfP}$ denotes the $L^2$-orthogonal projector on $\cvec{X}^{{\rm c},\ell}(\sfP)$.
  The spaces in the above decompositions are hierarchical, i.e., for all $\cvec{X}\in\{\cvec{G},\cvec{R}\}$ and all $\sfP\in\Th\cup\Fh$, $\cvec{X}^{{\rm c},\ell-1}(\sfP)\subset\cvec{X}^{{\rm c},\ell}(\sfP)$.
  In what follows, we will frequently use this property to write, for all integers $\ell\le k-1$ and $\sfP\in\Th\cup\Fh$,
  \begin{equation}\label{eq:Xcproj.ell+1.k}
    \Xcproj{\ell+1}{\sfP}\Xcproj{k}{\sfP} = \Xcproj{\ell+1}{\sfP}.
  \end{equation}


\section{DDR complex}\label{sec:ddr}

In this section we briefly recall the DDR complex and refer to \cite[Section 3]{Di-Pietro.Droniou:21*2} for further details.
From this point on, we fix an integer $k\ge 0$ corresponding to the polynomial degree of the complex.

\subsection{Spaces and component norms}\label{sec:def.space.ddr}
The DDR counterparts of $H^1(\Omega)$, $\Hcurl{\Omega}$, $\Hdiv{\Omega}$ and $L^2(\Omega)$ are respectively defined as follows:
\begin{multline*}\label{eq:Xgrad.h}
  \Xgrad{h}\coloneq\Big\{
  \underline{q}_h=\big((q_T)_{T\in\Th},(q_F)_{F\in\Fh}, q_{\Eh}\big)\st
    \\
    \text{$q_T\in \Poly{k-1}(T)$ for all $T\in\Th$,
      $q_F\in\Poly{k-1}(F)$ for all $F\in\Fh$,
      and $q_{\Eh}\in\Poly[\rm c]{k+1}(\Eh)$}
    \Big\},
\end{multline*}
with $\Poly[\rm c]{k+1}(\Eh)$ denoting the space of functions that are continuous on the mesh edge skeleton and the restriction of which to each $E\in\Eh$ belongs to $\Poly{k+1}(E)$,
\begin{equation*}\label{eq:Xcurl.h}
  \Xcurl{h}\coloneq\Big\{
  \begin{aligned}[t]
    \uvec{v}_h
    &=\big(
    (\bvec{v}_{\cvec{R},T},\bvec{v}_{\cvec{R},T}^\compl)_{T\in\Th},(\bvec{v}_{\cvec{R},F},\bvec{v}_{\cvec{R},F}^\compl)_{F\in\Fh}, (v_E)_{E\in\Eh}
    \big)\st
    \\
    &\qquad\text{$\bvec{v}_{\cvec{R},T}\in\Roly{k-1}(T)$ and $\bvec{v}_{\cvec{R},T}^\compl\in\cRoly{k}(T)$ for all $T\in\Th$,}
    \\
    &\qquad\text{$\bvec{v}_{\cvec{R},F}\in\Roly{k-1}(F)$ and $\bvec{v}_{\cvec{R},F}^\compl\in\cRoly{k}(F)$ for all $F\in\Fh$,}
    \\
    &\qquad\text{and $v_E\in\Poly{k}(E)$ for all $E\in\Eh$}\Big\},
  \end{aligned}
\end{equation*}
\begin{equation*}
  \Xdiv{h}\coloneq\Big\{
  \begin{aligned}[t]
    \uvec{w}_h
    &=\big((\bvec{w}_{\cvec{G},T},\bvec{w}_{\cvec{G},T}^\compl)_{T\in\Th}, (w_F)_{F\in\Fh}\big)\st
    \\
    &\qquad\text{$\bvec{w}_{\cvec{G},T}\in\Goly{k-1}(T)$ and $\bvec{w}_{\cvec{G},T}^\compl\in\cGoly{k}(T)$ for all $T\in\Th$,}
    \\
    &\qquad\text{and $w_F\in\Poly{k}(F)$ for all $F\in\Fh$}
    \Big\},
  \end{aligned}
\end{equation*}
and
\[
\Poly{k}(\Th)\coloneq\left\{
q_h\in L^2(\Omega)\st\text{$(q_h)_{|T}\in\Poly{k}(T)$ for all $T\in\Th$}
\right\}.
\]
Throughout this paper, we use underlines to denote vectors whose components are polynomials, and boldface fonts when some of these components are vector-valued. The same convention applies to the discrete operators defined in Section \ref{sec:ddr.complex} below.

The corresponding interpolators are denoted by $\Igrad{h}$, $\Icurl{h}$, $\Idiv{h}$, and $\lproj{k}{h}$ (using the same underlines/boldface convention as above).
The interpolator on $\Xgrad{h}$ is defined by \eqref{eq:Igradh} below, while the remaining interpolators correspond to the $L^2$-projections on the polynomial spaces appearing in the definitions of $\Xcurl{h}$, $\Xdiv{h}$, and $\Poly{k}(\Th)$, respectively; see \cite{Di-Pietro.Droniou:21*2} for details.
The restriction of each of these spaces to a mesh entity $\sfP\in\Mh$ that appears in its definition gathers the polynomial components on $\sfP$ and the geometric entities on its boundary, and is denoted replacing the subscript $h$ by $\sfP$ (e.g., $\Xgrad{T}$ or $\Xcurl{F}$).
A similar convention is used for the vectors in such restrictions (e.g., $\uvec{v}_F=(\bvec{v}_{\cvec{R},F},\bvec{v}_{\cvec{R},F}^\compl,(v_E)_{E\in\EF})\in\Xcurl{F}$) as well as for the interpolators (e.g., $\Igrad{T}$).

From this point on, to alleviate the notation, for all $\sfP\in\{\Omega\}\cup\Th\cup\Fh\cup\Eh$, we denote by $\norm[\sfP]{{\cdot}}$ the standard $L^2(\sfP)$-norm.
The same notation is used for the norm of $L^2(\sfP)^n$ (with $n$ denoting the dimension of $\sfP$), any ambiguity being removed by the scalar or vector nature of the argument.
In the analysis, we will additionally make use of the following local component $L^2$-norms $\tnorm[\bullet,\sfP]{{\cdot}}:\Xbullet{\sfP}\to\Real$, for $\bullet\in\{\GRAD,\CURL,\DIV\}$ and $\sfP\in\Th\cup\Fh$ (only $\sfP\in\Fh$ if $\bullet=\DIV$):
\begin{equation}\label{eq:tnorm}
  \begin{alignedat}{4}
    \tnorm[\GRAD,F]{\underline{q}_F}\coloneq{}&\bigg(
    \norm[F]{q_F}^2 + \sum_{E\in\EF} h_E\norm[E]{q_E}^2
    \bigg)^{\nicefrac12}
    &\quad&\forall F\in\Fh\,,\;\forall\underline{q}_F\in\Xgrad{F},\\
    \tnorm[\GRAD,T]{\underline{q}_T}\coloneq{}&\bigg(
    \norm[T]{q_T}^2 + \sum_{F\in\FT} h_F\tnorm[\GRAD,F]{\underline{q}_F}^2
    \bigg)^{\nicefrac12}&\quad&\forall T\in\Th,\; \forall\underline{q}_T\in\Xgrad{T},\\
    \tnorm[\CURL,F]{\uvec{v}_F}\coloneq{}&\bigg(
    \norm[F]{\bvec{v}_{\cvec{R},F}}^2
    + \norm[F]{\bvec{v}_{\cvec{R},F}^\compl}^2
    + \sum_{E\in\EF} h_E\norm[E]{v_E}^2
    \bigg)^{\nicefrac12}&\quad&\forall F\in\Fh\,,\;\forall\uvec{v}_F\in\Xcurl{F},\\
    \tnorm[\CURL,T]{\uvec{v}_F}\coloneq{}&\bigg(
    \norm[T]{\bvec{v}_{\cvec{R},T}}^2
    + \norm[T]{\bvec{v}_{\cvec{R},T}^\compl}^2
    + \sum_{F\in\FT} h_F\tnorm[\CURL,F]{\uvec{v}_F}^2
    \bigg)^{\nicefrac12}&\quad&\forall T\in\Th\,,\;\forall\uvec{v}_T\in\Xcurl{T},\\
    \tnorm[\DIV,T]{\uvec{w}_T}\coloneq{}&\bigg(
    \norm[T]{\bvec{w}_{\cvec{G},T}}^2
    + \norm[T]{\bvec{w}_{\cvec{G},T}^\compl}^2
    + \sum_{F\in\FT} h_F \norm[F]{w_F}^2
    \bigg)^{\nicefrac12}&\quad&\forall T\in\Th,\;\forall\uvec{w}_T\in\Xdiv{T}.
  \end{alignedat}
\end{equation}
For $\bullet\in\{\GRAD,\CURL,\DIV\}$, a global component $L^2$-norm is obtained setting $\tnorm[\bullet,h]{{\cdot}}\coloneq\left(\sum_{T\in\Th}\tnorm[\bullet,T]{{\cdot}}^2\right)^{\nicefrac12}$.

\subsection{Discrete vector calculus operators and potentials}

The discrete operators and potentials are defined mimicking integration-by-parts formulas, replacing internal/boundary values with those provided by the unknowns in each space.

\subsubsection{Gradient}

For any $E\in\Eh$, the edge gradient $\GE:\Xgrad{E}\to\Poly{k}(E)$ is such that, for all $q_E\in\Xgrad{E}$, $\GE q_E = q_E'$, where $q_E$ denotes the restriction to $E$ of $q_{\Eh}$ and the derivative is taken is the direction $\tangent_E$.

For any $F\in\Fh$, the face gradient $\cGF:\Xgrad{F}\to\vPoly{k}(F)$ and the scalar trace $\trF:\Xgrad{F}\to\Poly{k+1}(F)$ are such that, for all $\underline{q}_F\in\Xgrad{F}$,
\begin{alignat}{4}\label{eq:cGF}
  \int_F\cGF\underline{q}_F\cdot\bvec{v}_F
  &= -\int_F q_F\DIV_F\bvec{v}_F
  + \sum_{E\in\EF}\omega_{FE}\int_E q_E~(\bvec{v}_F\cdot\normal_{FE})
  &\quad&\forall\bvec{v}_F\in\vPoly{k}(F),
  \\ \label{eq:trF}
  \int_F\trF\underline{q}_F\DIV_F\bvec{v}_F
  &= -\int_F\cGF\underline{q}_F\cdot\bvec{v}_F
  + \sum_{E\in\EF}\omega_{FE}\int_E \trE\underline{q}_E~(\bvec{v}_F\cdot\normal_{FE})
  &\quad&\forall\bvec{v}_F\in\cRoly{k+2}(F).
\end{alignat}
Note that \eqref{eq:trF} fully determines $\trF\underline{q}_F\in\Poly{k+1}(F)$ owing to the fact that $\DIV:\cRoly{k+2}(F)\to\Poly{k+1}(F)$ is an isomorphism.

Similarly, for all $T\in\Th$, the element gradient $\cGT:\Xgrad{T}\to\vPoly{k}(T)$ and the scalar potential $\Pgrad:\Xgrad{T}\to\Poly{k+1}(T)$ are defined such that, for all $\underline{q}_T\in\Xgrad{T}$,
\begin{alignat}{4} \label{eq:cGT}
  \int_T\cGT\underline{q}_T\cdot\bvec{v}_T
  &= -\int_T q_T\DIV\bvec{v}_T
  + \sum_{F\in\FT}\omega_{TF}\int_F\trF\underline{q}_F~(\bvec{v}_T\cdot\normal_F)
  &\qquad&\forall\bvec{v}_T\in\vPoly{k}(T),
  \\ \nonumber
  \int_T\Pgrad\underline{q}_T\DIV\bvec{v}_T
  &= -\int_T\cGT\underline{q}_T\cdot\bvec{v}_T
  + \sum_{F\in\FT}\omega_{TF}\int_F\trF\underline{q}_F~(\bvec{v}_T\cdot\normal_F)
  &\qquad&\forall\bvec{v}_T\in\cRoly{k+2}(T).
\end{alignat}
  For future use, we notice that, using discrete inverse and trace inequalities and, for mesh elements, the continuity of $\trF$ stated in \cite[Proposition 6]{Di-Pietro.Droniou:21*2}, it holds, for all $\sfP\in\Th\cup\Fh$,
  \begin{equation}\label{eq:GT.boundedness}
    \norm[\sfP]{\cGP\underline{q}_\sfP}\lesssim h_\sfP^{-1}\tnorm[\GRAD,\sfP]{\underline{q}_\sfP}
    \qquad\forall\underline{q}_\sfP\in\Xgrad{\sfP}.
  \end{equation}

\subsubsection{Curl}

For all $F\in\Fh$, the face curl $\CF:\Xcurl{F}\to\Poly{k}(F)$ and the corresponding vector potential (which can be interpreted as a tangential component) $\trFt:\Xcurl{F}\to\vPoly{k}(F)$ are such that, for all $\uvec{v}_F\in\Xcurl{F}$,
\begin{equation}\label{eq:CF}
  \int_F\CF\uvec{v}_F~r_F
  = \int_F\bvec{v}_{\cvec{R},F}\cdot\VROT_F r_F
  - \sum_{E\in\EF}\omega_{FE}\int_E v_E~r_F
  \qquad\forall r_F\in\Poly{k}(F)
\end{equation}
and, for all $(r_F,\bvec{w}_F)\in\Poly{0,k+1}(F)\times\cRoly{k}(F)$,
\begin{equation*}
  \int_F\trFt\uvec{v}_F\cdot(\VROT_F r_F + \bvec{w}_F)
  = \int_F\CF\uvec{v}_F~r_F
  + \sum_{E\in\EF}\omega_{FE}\int_E v_E~r_F
  + \int_F\bvec{v}_{\cvec{R},F}^\compl\cdot\bvec{w}_F.
\end{equation*}
We note that testing $\trFt\uvec{v}_F$ against $\VROT_F r_F$ fixes $\Rproj{k}{F}\trFt\uvec{v}_F$, while testing
against $\bvec{w}_F$ provides the remaining projection $\Rcproj{k}{F}\trFt\uvec{v}_F$ and therefore fixes $\trFt\uvec{v}_F\in\vPoly{k}(F)$ entirely due to \eqref{eq:vPoly=Goly+cGoly.F}.

For all $T\in\Th$, the element curl $\cCT:\Xcurl{T}\to\vPoly{k}(T)$ and the vector potential $\Pcurl:\Xcurl{T}\to\vPoly{k}(T)$ are defined such that, for all $\uvec{v}_T\in\Xcurl{T}$,
\begin{equation}\label{eq:cCT}
  \int_T\cCT\uvec{v}_T\cdot\bvec{w}_T
  = \int_T\bvec{v}_{\cvec{R},T}\cdot\CURL\bvec{w}_T
  + \sum_{F\in\FT}\omega_{TF}\int_F\trFt\uvec{v}_F\cdot(\bvec{w}_T\times\normal_F)
  \qquad\forall\bvec{w}_T\in\vPoly{k}(T)
\end{equation}
and, for all $(\bvec{w}_T,\bvec{z}_T)\in\cGoly{k+1}(T)\times\cRoly{k}(T)$,
\begin{equation*}
  \int_T\Pcurl\uvec{v}_T\cdot(\CURL\bvec{w}_T + \bvec{z}_T)
  = \int_T\cCT\uvec{v}_T\cdot\bvec{w}_T
  - \sum_{F\in\FT}\omega_{TF}\int_F\trFt\uvec{v}_F\cdot(\bvec{w}_T\times\normal_F)
  + \int_T\bvec{v}_{\cvec{R},T}^\compl\cdot\bvec{z}_T.
\end{equation*}

\subsubsection{Divergence}

For all $T\in\Th$, the element divergence $\DT:\Xdiv{T}\to\Poly{k}(T)$ and vector potential $\Pdiv:\Xdiv{T}\to\vPoly{k}(T)$ are defined by:
For all $\uvec{w}_T\in\Xdiv{T}$,
\begin{equation}\label{eq:DT}
  \int_T\DT\uvec{w}_T~q_T
  = -\int_T\bvec{w}_{\cvec{G},T}\cdot\GRAD q_T
  + \sum_{F\in\FT}\omega_{TF}\int_F w_F~q_T
  \qquad\forall q_T\in\Poly{k}(T)
\end{equation}
and, for all $(r_T,\bvec{z}_T)\in\Poly{0,k+1}(T)\times\cGoly{k}(T)$,
\begin{equation*}
  \int_T\Pdiv\uvec{w}_T\cdot(\GRAD r_T + \bvec{z}_T)
  = -\int_T\DT\uvec{w}_T~r_T
  + \sum_{F\in\FT}\omega_{TF}\int_Fw_F~r_T
  + \int_T\bvec{w}_{\cvec{G},T}^\compl\cdot\bvec{z}_T.
\end{equation*}

\subsection{DDR complex}\label{sec:ddr.complex}

The DDR complex reads:
\begin{equation*}
  \begin{tikzcd}
    \Real\arrow{r}{\Igrad{h}} & \Xgrad{h}\arrow{r}{\uGh} & \Xcurl{h}\arrow{r}{\uCh} & \Xdiv{h}\arrow{r}{\Dh} & \Poly{k}(\Th)\arrow{r}{0} & \{0\},
  \end{tikzcd}
\end{equation*}
where the interpolator on $\Xgrad{h}$ is such that, for all $q:\Omega\to\Real$ smooth enough,
\begin{equation}\label{eq:Igradh}
  \begin{gathered}
    \text{%
      $\Igrad{h} q\coloneq\big(
      (\lproj{k-1}{T} q_{|T})_{T\in\Th},
      (\lproj{k-1}{F} q_{|F})_{F\in\Fh},
      q_{\Eh}
      \big)$,
      with $q_{\Eh}\in\Poly[\rm c]{k+1}(\Eh)$ such that%
    }
    \\
    \text{%
      $q_{\Eh}(\bvec{x}_V) = q(\bvec{x}_V)$ for all $V\in\Vh$ and
      $\lproj{k-1}{E} q_E = \lproj{k-1}{E} q$ for all $E\in\Eh$,
    }
  \end{gathered}
\end{equation}
and the global discrete gradient, curl, and divergence operators are obtained projecting the local face and element operators onto the appropriate spaces and patching together these projections: for all $(\underline{q}_h,\uvec{v}_h,\uvec{w}_h)\in\Xgrad{h}\times\Xcurl{h}\times\Xdiv{h}$,
\begin{equation*}
  \begin{aligned}
    \uGh\underline{q}_h
    &\coloneq
  \big(
  ( \Rproj{k-1}{T}\cGT\underline{q}_T,\Rcproj{k}{T}\cGT\underline{q}_T )_{T\in\Th},
  ( \Rproj{k-1}{F}\cGF\underline{q}_F,\Rcproj{k}{F}\cGF\underline{q}_F )_{F\in\Fh},
  ( \GE q_E )_{E\in\Eh}
  \big),
  \\
  \uCh\uvec{v}_h
  &\coloneq\big(
  ( \Gproj{k-1}{T}\cCT\uvec{v}_T,\Gcproj{k}{T}\cCT\uvec{v}_T )_{T\in\Th},
  ( \CF\uvec{v}_F )_{F\in\Fh}
  \big),
  \\
  (\Dh\uvec{w}_h)_{|T}
  &\coloneq\DT\uvec{w}_T\qquad\forall T\in\Th.
  \end{aligned}
\end{equation*}

\subsection{Discrete $L^2$-products and norms}\label{sec:ddr:L2-products.norms}

For $\bullet\in\{\GRAD,\CURL,\DIV\}$, we define a discrete $L^2$-product on $\Xbullet{h}$ setting, for all $\underline{x}_h,\,\underline{y}_h\in\Xbullet{h}$,
\[
(\underline{x}_h,\underline{y}_h)_{\bullet,h}
\coloneq\sum_{T\in\Th}(\underline{x}_T, \underline{y}_T)_{\bullet,T},
\]
with local contribution such that, denoting by $\Pbullet$ the potential on $\Xbullet{T}$ (with $l=k+1$ if $\bullet=\GRAD$, $l=k$ otherwise),
\[
(\underline{x}_T, \underline{y}_T)_{\bullet,T},
\coloneq\int_T\Pbullet\underline{x}_T\cdot\Pbullet\underline{y}_T
+ s_{\bullet,T}(\underline{x}_T, \underline{y}_T).
\]
Above, $s_{\bullet,T}:\Xbullet{T}\times\Xbullet{T}\to\Real$ is a local stabilisation bilinear form, the precise expression of which can be found in \cite[Section 4.4]{Di-Pietro.Droniou:21*2} and whose role is to control the differences between the traces of the potentials $\Pbullet$ and the degrees of freedom on the boundary of $T$.
We will also need, in what follows, the discrete global and local $L^2$-norms induced by the above $L^2$-products, that will be denoted by $\norm[\bullet,h]{{\cdot}}$ and $\norm[\bullet,T]{{\cdot}}$, respectively.


\section{Serendipity DDR complex}\label{sec:serendipity.ddr}

As demonstrated by the example of the DDR complex, polytopal discrete complexes have degrees of freedom (DOFs) attached to mesh entities of various dimensions: elements, faces, edges, vertices. Their specific choice is related to the degree of their polynomial consistency. The principle for constructing a serendipity version is to notice that, on a polygonal face (resp.~polyhedral element), the knowledge of a polynomial on the boundary alone can often provide sufficient information to reconstruct it inside the face (resp.~element).

Following the notations of Section \ref{sec:blueprint}, this principle guides the construction of the serendipity spaces $\tXg{i}$, in which face or element internal DOFs that are not required to reconstruct polynomials of the desired degree are removed. The reduction map $\Rg{}$ consists in removing these DOFs, while the extension map $\Eg{}$ re-creates these DOFs by fixing them to those of the polynomial reconstructed from the boundary and remaining internal DOFs; this way, DOFs representing in $\Xg{i}$ (through $\Ig{i}$) polynomials in $\Poly{k_i}$ are exactly recovered, ensuring the polynomial consistency expressed by \ref{P:consistency}, which is a driving property in the process.

This design, however, has a caveat, as the DOFs removed on a lower-dimensional entity (e.g., a face) can impact some components of the discrete differential operators on higher-dimensional entities (e.g., an element); in particular, to ensure the cochain map property \ref{P:E.R.cochain}, the values of the reduction operator attached to higher-dimensional entities might have to be adjusted to account for the removal of DOFs on lower-dimensional entities. The design of the reduction operator is thus initially inspired by the consistency property, but must then be tuned to fulfill Assumption \ref{ass:cohomology}, which relates to the choice of serendipity spaces in each vertical section of Diagram \eqref{eq:ser.diagram}.
\medskip

The rest of this section shows how to apply these principles to the DDR sequence presented in \cite{Di-Pietro.Droniou:21*2,Beirao-da-Veiga.Dassi.ea:22}. In this context, focusing on the local sequence on an element $T$ of the mesh, Diagram \eqref{eq:ser.diagram} becomes Diagram \eqref{eq:ser.ddr.seq} below.
The operators that appear in the construction of the SDDR complex are summarized in Table \ref{tab:serendipity.ddr}.

\begin{equation}\label{eq:ser.ddr.seq}
  \begin{tikzcd}
    & 
    \arrow{d}{\Igrad{T}}\Poly{k+1}(T)  &[2em] \arrow{d}{\Icurl{T}}\vPoly{k}(T)
    &[2em] \arrow{d}{\Idiv{T}}\vPoly{k}(T) & \arrow{d}{\Id} \Poly{k}(T)\\[2em]
    \arrow[<->]{d}{\Id}\Real\arrow{r}{\Igrad{T}} & \Xgrad{T}
    \arrow{r}{\uGT}\arrow[bend left, dashed]{d}{\Rgrad{T}} &[2em] \Xcurl{T}
    \arrow{r}{\uCT}\arrow[bend left, dashed]{d}{\Rcurl{T}} &[2em] \Xdiv{T} 
    \arrow{r}{\DT}\arrow[<->]{d}{\Id} &[2em] \Poly{k}(T) 
    \arrow[<->]{d}{\Id}\arrow{r} & 0
    \\[2em]
    \Real\arrow{r}{\sIgrad{T}} & \sXgrad{T}
    \arrow{r}{\suGT}\arrow[bend left]{u}{\Egrad{T}} &[2em] \sXcurl{T}
    \arrow{r}{\suCT}\arrow[bend left]{u}{\Ecurl{T}} &[2em] \Xdiv{T}
    \arrow{r}{\DT} &[2em] \Poly{k}(T)
    \arrow{r} & 0.
  \end{tikzcd}
\end{equation}

\begin{remark}[No serendipity for {$\Xdiv{T}$} spaces]\label{rem:no.seren.Xdiv}
  Notice that no serendipity is performed on the space $\Xdiv{T}$.
  The strong objection to reducing the DOFs in that space is discussed in Section \ref{sec:Xdiv.no.serendipity}.
\end{remark}

The global complexes are obtained collecting the local components of the spaces and operators and enforcing their single-valuedness on vertices, edges, and faces internal to the domain.
For the global complex, the polynomial consistency property is not relevant.
A comparison in terms of DOFs count for tetrahedral and hexahedral elements among SDDR, DDR, and (non-serendipity) finite elements is provided in Table \ref{tab:dof.count}.

\begin{table}\renewcommand{\arraystretch}{1.2}
  \begin{tabular}{ccc}
    \toprule
    \textbf{Notation} & \textbf{Name} & \textbf{Definition} \\
    \midrule
    $\SG{F}$ & Gradient serendipity operator on faces & Problem \eqref{eq:serendipity.problem:F} with $\mathcal{L}_F$ given by \eqref{eq:LF.grad} \\
    $\SC{F}$ & Curl serendipity operator on faces & Problem \eqref{eq:serendipity.problem:F} with $\mathcal{L}_F$ given by \eqref{eq:LF.curl} \\
    $\Egrad{F}$ & Gradient extension on faces & Eq. \eqref{eq:Egrad.F} \\
    $\Ecurl{F}$ & Curl extension on faces & Eq. \eqref{eq:Ecurl.F} \\
    $\Rgrad{F}$ & Gradient reduction on faces & Eq. \eqref{eq:R.grad.F} \\
    $\Rcurl{F}$ & Curl reduction on faces & Eq. \eqref{eq:R.curl.F} \\
    $\SG{T}$ & Gradient serendipity operator on elements & Problem \eqref{eq:serendipity.problem:T} with $\mathcal{L}_T$ given by \eqref{eq:LT.grad} \\
    $\SC{T}$ & Curl serendipity operator on elements & Problem \eqref{eq:serendipity.problem:T} with $\mathcal{L}_T$ given by \eqref{eq:LT.curl} \\
    $\Egrad{T}$ & Gradient extension on elements & Eq. \eqref{eq:Egrad.T} \\
    $\Ecurl{T}$ & Curl extension on elements & Eq. \eqref{eq:Ecurl.T} \\
    $\Rgrad{T}$ & Gradient reduction on elements & Eq. \eqref{eq:R.grad.T} \\
    $\Rcurl{T}$ & Curl reduction on elements & Eq. \eqref{eq:R.curl.T} \\
    \bottomrule
  \end{tabular}  
  \caption{Synoptic table of the operators appearing in the construction of the SDDR complex.\label{tab:serendipity.ddr}}
\end{table}

\begin{table}\centering
  \begin{tabular}{c|ccc|ccc}
    \toprule
    \multirow{2}{*}{Discrete space} & \multicolumn{3}{c|}{Tetrahedron} & \multicolumn{3}{c}{Hexahedron} \\
    & $k=0$ & $k=1$ & $k=2$ & $k=0$ & $k=1$ & $k=2$ \\
    \midrule
    $H^1(T)$ & 4 \sep{} 4 \sep{} 4 & 15 \sep{} 10 \sep{} 10 & 32 \sep{} 20 \sep{} 20 & 8 \sep{} 8 \sep{} 8 & 27 \sep{} 20 \sep{} 27 & 54 \sep{} 32 \sep{} 64 \\
    $\Hcurl{T}$ & 6 \sep{} 6 \sep{} 6 & 28 \sep{} 23 \sep{} 20 & 65 \sep{} 53 \sep{} 45 & 12 \sep{} 12 \sep{} 12 & 46 \sep{} 39 \sep{} 54 & 99 \sep{} 77 \sep{} 144 \\
    $\Hdiv{T}$ & 4 \sep{} 4 \sep{} 4 & 18 \sep{} 18 \sep{} 15 &  44 \sep{} 44 \sep{} 36 & 6 \sep{} 6 \sep{} 6 & 24 \sep{} 24 \sep{} 36 & 56 \sep{} 56 \sep{} 108 \\
    $L^2(T)$ & 1 \sep{} 1 \sep{} 1 & 4 \sep{} 4 \sep{} 4 & 10 \sep{} 10 \sep{} 10 & 1 \sep{} 1 \sep{} 1 & 4 \sep{} 4 \sep{} 8 & 10 \sep{} 10 \sep{} 27 \\
    \bottomrule
  \end{tabular}
  \caption{Dimension of the local spaces in the DDR \sep{} SDDR \sep{} Raviart--Thomas--N\'ed\'elec FEM discrete Hilbert complexes of comparable degree $k\in\{0,1,2\}$ on a tetrahedron ($\eta_T = 4$ and $\eta_F = 3$ for all $F\in\FT$) and a hexahedron ($\eta_T = 6$ and $\eta_F = 4$ for all $F\in\FT$).\label{tab:dof.count}}
\end{table}

In the rest of this paper, as can be seen in Diagram \ref{eq:ser.ddr.seq} and in line with the notations adopted in Section \ref{sec:blueprint}, the objects related to the SDDR complex (spaces, discrete operators, vectors), as well as the reduction maps (having co-domain in serendipity spaces), will be denoted with a hat, to visually distinguish them from objects related to the DDR complex.

\subsection{Boundary selection and notation for inequalities}

As discussed above, designing serendipity versions of the DDR spaces requires to reconstruct polynomials inside elements/faces from boundary values on selected faces/edges.
To ensure a stable reconstruction, the selection of these faces/edges must be done in such a way that they are not close to being aligned. This is stated more precisely in the following assumption.

\begin{assumption}[Boundaries selection for serendipity spaces]\label{assum:choice.detaP}
  For each $\sfP$ element (resp.~face) of the mesh, we select a set $\sbdry{\sfP}$ of $\eta_\sfP\ge 2$ faces (resp.~edges) that are not pairwise aligned and such that $\sfP$ lies entirely on one side of the hyperplane $H_\sfb$ spanned by each $\sfb\in\sbdry{\sfP}$. The scaled distance function to $H_\sfb$ is defined by $\dist_{\sfP\sfb}(\bvec{x})=h_{\sfP}^{-1}(\bvec{x}-\bvec{x}_{\sfb})\cdot\normal_{\sfP\sfb}$, with $\normal_{\sfP\sfb}$ outer normal to $\sfP$ on $\sfb$. Finally, we define a \emph{boundaries selection regularity parameter} $\theta>0$ such that $\dist_{\sfP\sfb}(\bvec{x}_{\sfb'})\ge \theta$ for all $\sfb\not=\sfb'$ in $\sbdry{\sfP}$.
\end{assumption}

From this point on, $a \lesssim b$ (resp., $a \gtrsim b$) means $a\le Cb$ (resp. $a \ge C b$) with $C$ only depending on $\Omega$, $k$, the mesh regularity parameter $\varrho$ (cf. Section \ref{sec:setting:domain.mesh}), and the boundaries selection regularity parameter $\theta$.
We also write $a\simeq b$ as a shorthand for ``$a\lesssim b$ and $b\lesssim a$''.

\begin{remark}[Trivial boundaries selection]
  We can always select two faces (resp.~edges) in each element (resp.~face) such that $\theta\gtrsim \varrho$.
  This minimal selection corresponds to the absence of serendipity, as can be easily verified by noticing that, in this case, \eqref{eq:def.ellF} and \eqref{eq:def.ellT} below give $\ell_\sfP = k - 1$ for $\sfP\in\Th\cup\Fh$.%
\end{remark}

\subsection{Estimates of polynomials from boundary values}

We state in this section some bounds of polynomial functions on polytopes in terms of boundary values and, possibly, reduced projections in the interior of the polytope. Similar estimates have been established in the recent preprint \cite{Beirao-da-Veiga.Dassi.ea:22.1}, but only for 2D convex polygons and with a proof by contradiction that, contrary to the one developed in Appendix \ref{sec:estimates.proofs}, does not allow for a direct estimate of the constants.

\begin{lemma}[Estimate of scalar polynomials from boundary values]\label{lem:reconstruction}
  Let $k\ge 0$ and let Assumption \ref{assum:choice.detaP} hold.
  Let $\sfP\in\Th\cup\Fh$ be a mesh element or face. Then, it holds
  \begin{equation}\label{eq:est.reconstruction}
    \norm[\sfP]{q}\lesssim \norm[\sfP]{\lproj{k+1-\eta_\sfP}{\sfP}q}+h_{\sfP}^{\frac12}\sum_{\sfb\in\sbdry{\sfP}}\norm[\sfb]{q_{|\sfb}}
    \qquad\forall q\in\Poly{k+1}(\sfP).
  \end{equation}
\end{lemma}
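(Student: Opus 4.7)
My approach leverages the polynomial
$B\coloneq\prod_{\sfb\in\sbdry{\sfP}}\dist_{\sfP\sfb}$,
which has degree $\eta_\sfP$, vanishes on each selected boundary and, since $\sfP$ lies entirely on one side of every $H_\sfb$, has a fixed sign on $\sfP$. Thus the entire boundary information in \eqref{eq:est.reconstruction} is encoded into one single sign-definite polynomial factor, which is the key structural object in what follows.

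The first step is a divisibility observation. If $p\in\Poly{k+1}(\sfP)$ vanishes on every $\sfb\in\sbdry{\sfP}$, then $p$ vanishes on each full hyperplane $H_\sfb$, and the non-alignment requirement in Assumption~\ref{assum:choice.detaP} forces the $\dist_{\sfP\sfb}$ to be pairwise non-proportional, so $B$ divides $p$ in the polynomial ring and we can write $p=Bs$ for some $s\in\Poly{k+1-\eta_\sfP}(\sfP)$. If additionally $\lproj{k+1-\eta_\sfP}{\sfP}p=0$, testing $p=Bs$ against $s$ produces $\int_\sfP Bs^2=\int_\sfP p\,s=0$, and the sign-definiteness of $B$ forces $s\equiv 0$ and hence $p\equiv 0$. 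This already yields an estimate of the form \eqref{eq:est.reconstruction} with \emph{some} constant, by finite-dimensional norm equivalence on $\Poly{k+1}(\sfP)$; the real task is to recover a constant that depends on the geometry only through $\varrho$, $\theta$ and $k$, and with the correct $h_\sfP^{\frac12}$ scaling.

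To that end, I would construct a boundary extension $w\in\Poly{k+1}(\sfP)$ with $w_{|\sfb}=q_{|\sfb}$ for all $\sfb\in\sbdry{\sfP}$ and $\norm[\sfP]{w}\lesssim h_\sfP^{\frac12}\sum_{\sfb\in\sbdry{\sfP}}\norm[\sfb]{q_{|\sfb}}$, iteratively, by extending $q_{|\sfb}$ tangentially from one boundary at a time and absorbing the unwanted contributions on the other boundaries into factors $\dist_{\sfP\sfb'}$; the lower bound $|\dist_{\sfP\sfb'}(\bvec{x}_\sfb)|\ge\theta$ from Assumption~\ref{assum:choice.detaP} is what keeps the resulting constants controlled in terms of $\theta$ alone. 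Once $w$ is at hand, $q-w$ vanishes on every $\sfb\in\sbdry{\sfP}$ and, by the divisibility step, equals $Br$ for some $r\in\Poly{k+1-\eta_\sfP}(\sfP)$. Testing against $r$ and using the $L^2$-orthogonality $\int_\sfP q\,r=\int_\sfP(\lproj{k+1-\eta_\sfP}{\sfP}q)\,r$, I obtain
\[
\int_\sfP B r^2=\int_\sfP(\lproj{k+1-\eta_\sfP}{\sfP}q)\,r-\int_\sfP w\,r.
\]
A weighted coercivity $\int_\sfP|B|r^2\gtrsim\norm[\sfP]{r}^2$ then follows because $|B|$ is bounded below by a constant depending on $\varrho$ and $\eta_\sfP$ on a ball of radius $\varrho h_\sfP/2$ centred at $\bvec{x}_\sfP$ (combined with a standard Markov-type polynomial inequality on $\sfP$); this yields $\norm[\sfP]{r}\lesssim\norm[\sfP]{\lproj{k+1-\eta_\sfP}{\sfP}q}+\norm[\sfP]{w}$. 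Combining with $\norm[\sfP]{q}\le\norm[\sfP]{w}+\norm[\sfP]{Br}\le\norm[\sfP]{w}+\norm[\sfP]{r}$, valid since $|B|\le 1$ on $\sfP$, produces \eqref{eq:est.reconstruction}.

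The hard part is the construction of $w$. A naive Lagrange-type multiplicative gluing $\ell_\sfb=E_\sfb(q_{|\sfb})\prod_{\sfb'\ne\sfb}\dist_{\sfP\sfb'}/\dist_{\sfP\sfb'}(\bvec{x}_\sfb)$ raises the polynomial degree beyond $k+1$ and only matches $q_{|\sfb}$ at a single point of $\sfb$, not along the full trace. A correct construction has to match entire polynomial traces on all $\sfb\in\sbdry{\sfP}$ while keeping the degree at most $k+1$ and tracking the dependence on $\theta$ carefully; this bookkeeping is exactly what yields the direct (non-compactness) estimate of the constants that the paper emphasises, and is the technically most demanding part of the argument.
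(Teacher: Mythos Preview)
Your plan is correct and mirrors the paper's proof almost exactly: the paper carries out precisely the iterative construction of $w$ you sketch, writing $q=q_1+\dist_{\sfP\sfb_1}q_2+\cdots+(\dist_{\sfP\sfb_1}\cdots\dist_{\sfP\sfb_{\eta_\sfP-1}})q_{\eta_\sfP}+B\,\widetilde{q}_{\eta_\sfP}$ by extending one boundary trace at a time (constantly in the normal direction), factoring out the successive $\dist_{\sfP\sfb_j}$, and controlling the norms via the lower bound on $\dist_{\sfP\sfb_r}$ on $\sfb_{j+1}$ coming from Assumption~\ref{assum:choice.detaP}. Your weighted-coercivity step $\int_\sfP B r^2\gtrsim\norm[\sfP]{r}^2$ is exactly the paper's final estimate on $\widetilde{q}_{\eta_\sfP}$, obtained via the same ball-of-radius-$\varrho h_\sfP$ argument and \cite[Lemma~1.25]{Di-Pietro.Droniou:20}.
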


\begin{proof}
  See Section \ref{sec:estimates.proofs:reconstruction}.
\end{proof}

\begin{lemma}[Estimate of vector polynomials from tangential boundary values]\label{lem:reconstruction.vpoly}
  Let $k\ge 0$ and let Assumption \ref{assum:choice.detaP} hold.
  Let $\sfP\in\Th\cup\Fh$ be a mesh element or face. Then, denoting by $\bdry{\sfP}$ the set of faces (if $\sfP$ is an element) or edges (if $\sfP$ is a face) of $\sfP$, it holds
  \begin{equation}\label{eq:est.reconstruction.curl}
    \norm[\sfP]{\bvec{v}}\lesssim
    \norm[\sfP]{\Rproj{k-2}{\sfP}\bvec{v}}+\norm[\sfP]{\Rcproj{k+2-\eta_\sfP}{\sfP}\bvec{v}}
    + \left(\sum_{\sfb\in\bdry{\sfP}} h_{\sfP}\norm[\sfb]{\bvec{v}_{{\rm t},\sfb}}^2\right)^{\nicefrac12}
    \qquad\forall\bvec{v}\in\vPoly{k}(\sfP),
  \end{equation}
  where $\bvec{v}_{{\rm t},\sfb}$ is the tangential component of $\bvec{v}$ on $\sfb$ (that is, $\bvec{v}_{{\rm t},\sfb}=\bvec{v}_{|E}\cdot\tangent_E$ if $\sfb=E$ is an edge, $\bvec{v}_{{\rm t},\sfb}=\normal_F\times(\bvec{v}_{|F}\times\normal_F)$ if $\sfb=F$ is a face).
\end{lemma}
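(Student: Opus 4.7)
The plan is to reduce the vector estimate to the scalar Lemma \ref{lem:reconstruction} by decomposing $\bvec{v}\in\vPoly{k}(\sfP)$ via the direct sum \eqref{eq:vPoly=Roly+cRoly} as $\bvec{v}=\bvec{v}_\cvec{R}+\bvec{v}_\cvec{R}^\compl$ with $\bvec{v}_\cvec{R}\in\Roly{k}(\sfP)$ and $\bvec{v}_\cvec{R}^\compl\in\cRoly{k}(\sfP)$, and to bound each summand separately by the right-hand side of \eqref{eq:est.reconstruction.curl}. A finite-dimensional norm equivalence on $\vPoly{k}(\sfP)$, combined with the triangle inequality, ensures that $\norm[\sfP]{\bvec{v}}\simeq \norm[\sfP]{\bvec{v}_\cvec{R}}+\norm[\sfP]{\bvec{v}_\cvec{R}^\compl}$. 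Each summand is then attacked by applying the scalar lemma to a scalar polynomial generator, and translating both the projection term and the boundary values back to the vector quantities through further norm equivalences.

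For the $\cvec{R}^\compl$-part, I would use $\cRoly{k}(\sfP)=(\bvec{x}-\bvec{x}_\sfP)\Poly{k-1}(\sfP)$ to write $\bvec{v}_\cvec{R}^\compl=(\bvec{x}-\bvec{x}_\sfP)r$ for $r\in\Poly{k-1}(\sfP)$, and observe that mesh regularity (inscribed ball of radius $\varrho h_\sfP$ centered at $\bvec{x}_\sfP$) yields the homogeneity equivalence $\norm[\sfP]{(\bvec{x}-\bvec{x}_\sfP)r}\simeq h_\sfP\norm[\sfP]{r}$. Applying Lemma \ref{lem:reconstruction} to $r\in\Poly{(k-2)+1}(\sfP)$ bounds $\norm[\sfP]{r}$ by $\norm[\sfP]{\lproj{k-1-\eta_\sfP}{\sfP}r}+h_\sfP^{1/2}\sum_{\sfb\in\sbdry{\sfP}}\norm[\sfb]{r|_\sfb}$. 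Since $(\bvec{x}-\bvec{x}_\sfP)\lproj{k-1-\eta_\sfP}{\sfP}r\in\cRoly{k-\eta_\sfP}(\sfP)\subset\cRoly{k+2-\eta_\sfP}(\sfP)$, a finite-dimensional equivalence between the $L^2$-orthogonal projections $\lproj{}{}$ (onto $\Poly{}(\sfP)$) and $\Rcproj{}{}$ (onto $\cRoly{}(\sfP)$) on that subspace allows one to replace $h_\sfP\norm[\sfP]{\lproj{k-1-\eta_\sfP}{\sfP}r}$ by $\norm[\sfP]{\Rcproj{k+2-\eta_\sfP}{\sfP}\bvec{v}}$. The boundary values $\norm[\sfb]{r|_\sfb}$ are controlled by $\norm[\sfb]{\bvec{v}_{{\rm t},\sfb}}$ via the explicit relation $\bvec{v}_\cvec{R}^\compl|_\sfb=(\bvec{x}-\bvec{x}_\sfP)|_\sfb\,r|_\sfb$.

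For the $\cvec{R}$-part, write $\bvec{v}_\cvec{R}=\VROT_F q$ (face case) or $\bvec{v}_\cvec{R}=\CURL\bvec{q}$ (element case) for a generator of degree $k+1$, and apply Lemma \ref{lem:reconstruction} to that generator. The interior term $\norm[\sfP]{\Rproj{k-2}{\sfP}\bvec{v}}$ encodes, via $\Roly{k-2}(\sfP)=\VROT_F\Poly{k-1}(\sfP)$ (or the 3D analogue) and another finite-dimensional norm equivalence, the lower-degree $L^2$-projection of the generator, so the volumic part of the estimate follows as before.

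The hard part is the boundary data for the $\cvec{R}$-summand: on an edge $\sfb$, the tangential trace of $\VROT_F q$ delivers the \emph{normal} derivative of $q$, not $q|_\sfb$ itself, so Lemma \ref{lem:reconstruction} cannot be applied verbatim to $q$. The way around this is to recover $q|_\sfb$ up to an additive constant by integrating the tangential traces along $\sfb$, and to fix the integration constants using the continuity of $q$ across the vertices of $\sfP$; this reconstruction requires tangential information on \emph{all} boundary pieces of $\sfP$, which is precisely why the sum in \eqref{eq:est.reconstruction.curl} ranges over the full boundary set $\bdry{\sfP}$ rather than the selection $\sbdry{\sfP}$ used in the scalar lemma. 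A secondary subtlety is that $\bvec{v}_{{\rm t},\sfb}$ mixes contributions from both $\bvec{v}_\cvec{R}$ and $\bvec{v}_\cvec{R}^\compl$, so separating the two summands on the boundary must be done either jointly, or by peeling them off one degree at a time using the different degree structures of $\Roly{}$ and $\cRoly{}$.
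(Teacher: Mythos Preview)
Your decomposition $\bvec{v}=\bvec{v}_\cvec{R}+\bvec{v}_\cvec{R}^\compl$ via $\Roly{k}\oplus\cRoly{k}$ leads to a genuine obstruction in the $\Roly$-part. As you correctly observe, on an edge $E$ the tangential trace $(\VROT_F q)\cdot\tangent_E$ equals (up to sign) the \emph{normal} derivative $\partial_{\normal_{FE}}q$. Your proposed fix---``integrating the tangential traces along $\sfb$''---integrates $\partial_{\normal_{FE}}q$ along $E$, which does not yield $q|_E$; for that you would need the \emph{tangential} derivative $\partial_{\tangent_E}q$. Normal-derivative data on $\partial F$ simply does not determine $q|_{\partial F}$, so Lemma~\ref{lem:reconstruction} cannot be fed the boundary input it requires. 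The three-dimensional case $\bvec{v}_\cvec{R}=\CURL\bvec{q}$ is worse: the tangential trace $\normal_F\times(\CURL\bvec{q}\times\normal_F)$ has no direct relationship to $\bvec{q}|_F$. A secondary issue is that your ``finite-dimensional equivalence'' replacing $h_\sfP\norm[\sfP]{\lproj{k-1-\eta_\sfP}{\sfP}r}$ by $\norm[\sfP]{\Rcproj{k+2-\eta_\sfP}{\sfP}\bvec{v}}$ is not justified: the sum $\Roly\oplus\cRoly$ is not $L^2$-orthogonal, so $\Rcproj{k+2-\eta_\sfP}{\sfP}\bvec{v}$ mixes both summands.

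The paper avoids this by using the \emph{other} decomposition \eqref{eq:vPoly=Goly+cGoly}: $\bvec{v}=\GRAD q+\bvec{z}$ with $q\in\Poly{k+1}(\sfP)$ and $\bvec{z}\in\cGoly{k}(\sfP)$. The key gain is that the tangential trace of $\GRAD q$ on $\sfb$ is exactly the surface gradient $\GRAD_\sfb(q|_\sfb)$, so a Poincar\'e inequality along $\partial\sfP$ bounds $\norm[\partial\sfP]{q}$ directly by $\sum_{\sfb}\norm[\sfb]{(\GRAD q)_{{\rm t},\sfb}}=\sum_\sfb\norm[\sfb]{\bvec{v}_{{\rm t},\sfb}-\bvec{z}_{{\rm t},\sfb}}$. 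The $\bvec{z}$-part is dispatched first via the isomorphism $\CURL:\cGoly{k}\to\Roly{k-1}$ and an integration-by-parts bound on $\CURL\bvec{v}$ (this is where $\Rproj{k-2}{\sfP}\bvec{v}$ enters). The interior projection $\lproj{k+1-\eta_\sfP}{\sfP}q$ is then controlled by testing $\GRAD q=\bvec{v}-\bvec{z}$ against $\cRoly{k+2-\eta_\sfP}$ and integrating by parts, which is where $\Rcproj{k+2-\eta_\sfP}{\sfP}\bvec{v}$ appears. Lemma~\ref{lem:reconstruction} is then applied once, to $q$, and a final inverse inequality recovers $\norm[\sfP]{\GRAD q}$.
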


\begin{proof}
  See Section \ref{sec:estimates.proofs:reconstruction.vpoly}.
\end{proof}

\subsection{Serendipity, extension, and reduction operators on faces}

\subsubsection{Serendipity problem}

For a face $F\in\Fh$, recalling that $\eta_F$ is defined in Assumption \ref{assum:choice.detaP}, we let
\begin{equation}\label{eq:def.ellF}
\ell_F\coloneq k+1-\eta_F\le k-1.
\end{equation}
Given a linear form $\mathcal{L}_F:\vPoly{k}(F)\times\cRoly{\ell_F+1}(F)\to\Real$, we consider the following problem:
Find $(\bvec{\sigma},\bvec{\lambda})\in\vPoly{k}(F)\times\cRoly{\ell_F+1}(F)$ such that
\begin{equation}\label{eq:serendipity.problem:F}
  \mathcal{A}_F((\bvec{\sigma},\bvec{\lambda}), (\bvec{\tau},\bvec{\mu}))
  = \mathcal{L}_F(\bvec{\tau},\bvec{\mu})\qquad
  \forall (\bvec{\tau},\bvec{\mu})\in\vPoly{k}(F)\times\cRoly{\ell_F+1}(F),
\end{equation}
with bilinear form $\mathcal{A}_F:[\vPoly{k}(F)\times\cRoly{\ell_F+1}(F)]^2\to\Real$ such that
\begin{equation}\label{eq:AF}
  \mathcal{A}_F((\bvec{\upsilon},\bvec{\nu}), (\bvec{\tau},\bvec{\mu}))
  \coloneq
  h_F\sum_{E\in\EF}\int_E(\bvec{\upsilon}\cdot\tangent_E)~(\bvec{\tau}\cdot\tangent_E)
  + h_F^2\int_F\ROT_F\bvec{\upsilon}~\ROT_F\bvec{\tau}
  + \int_F\bvec{\upsilon}\cdot\bvec{\mu} - \int_F\bvec{\tau}\cdot\bvec{\nu}.
\end{equation}

\begin{proposition}[Well-posedness of the serendipity problem on faces]\label{prop:serendipity.well-posedness:F}
  The serendipity problem \eqref{eq:serendipity.problem:F} admits a unique solution which satisfies
  \begin{equation}\label{eq:serendipity.well-posedness:F}
    \norm[F]{(\bvec{\sigma}, \bvec{\lambda})}
    \coloneq \norm[F]{\bvec{\sigma}} + \norm[F]{\bvec{\lambda}}
    \lesssim \norm[F]{\mathcal{L}_F}
  \end{equation}
  where $\norm[F]{\mathcal{L}_F}$ is the norm of the linear form $\mathcal{L}_F$ induced by the $L^2$-norm on $\vPoly{k}(F)\times\cRoly{\ell_F+1}(F)$.
\end{proposition}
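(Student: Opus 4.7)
The plan is to exploit the saddle-point structure of $\mathcal{A}_F$: writing $\mathcal{A}_F((\bvec{\sigma},\bvec{\lambda}),(\bvec{\tau},\bvec{\mu})) = a_F(\bvec{\sigma},\bvec{\tau}) + \int_F\bvec{\sigma}\cdot\bvec{\mu} - \int_F\bvec{\tau}\cdot\bvec{\lambda}$ with $a_F$ collecting the edge and $\ROT_F$ contributions from \eqref{eq:AF}, we face a (sign-twisted) mixed problem on finite-dimensional spaces of equal total dimension. It therefore suffices to establish the a priori bound \eqref{eq:serendipity.well-posedness:F}, as existence then follows from uniqueness by dimension counting. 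The centrepiece of the argument is Lemma \ref{lem:reconstruction.vpoly} applied on $F$, which is viable precisely because $\ell_F+1=k+2-\eta_F$ matches the order of the complementary projection appearing there.

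To bound $\bvec{\sigma}$, I would apply Lemma \ref{lem:reconstruction.vpoly}, giving
\begin{equation*}
\norm[F]{\bvec{\sigma}}\lesssim \norm[F]{\Rproj{k-2}{F}\bvec{\sigma}} + \norm[F]{\Rcproj{\ell_F+1}{F}\bvec{\sigma}} + \bigg(\sum_{E\in\EF}h_F\norm[E]{\bvec{\sigma}\cdot\tangent_E}^2\bigg)^{\nicefrac12}.
\end{equation*}
The last term is immediately $\le a_F(\bvec{\sigma},\bvec{\sigma})^{\nicefrac12}$. Testing \eqref{eq:serendipity.problem:F} with $(\bvec{0},\bvec{\mu})$ gives $\int_F\bvec{\sigma}\cdot\bvec{\mu}=\mathcal{L}_F(\bvec{0},\bvec{\mu})$ for all $\bvec{\mu}\in\cRoly{\ell_F+1}(F)$, which controls $\norm[F]{\Rcproj{\ell_F+1}{F}\bvec{\sigma}}$ by $\norm[F]{\mathcal{L}_F}$. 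The main step---and the expected main obstacle---is to bound $\norm[F]{\Rproj{k-2}{F}\bvec{\sigma}}$ by $a_F(\bvec{\sigma},\bvec{\sigma})^{\nicefrac12}$: since $\Rproj{k-2}{F}\bvec{\sigma}\in\Roly{k-2}(F)=\VROT_F\Poly{k-1}(F)$, I would write it as $\VROT_F\phi$ for a zero-mean $\phi\in\Poly{k-1}(F)$, so that an integration by parts in the plane of $F$ yields
\begin{equation*}
\norm[F]{\Rproj{k-2}{F}\bvec{\sigma}}^2 = \int_F\bvec{\sigma}\cdot\VROT_F\phi = -\int_F\phi\,\ROT_F\bvec{\sigma} + \sum_{E\in\EF}\omega_{FE}\int_E\phi\,(\bvec{\sigma}\cdot\tangent_E).
\end{equation*}
A Poincar\'e inequality on the zero-mean polynomial $\phi$ ($\norm[F]{\phi}\lesssim h_F\norm[F]{\VROT_F\phi}$) followed by a discrete trace inequality ($\norm[E]{\phi}\lesssim h_F^{-\nicefrac12}\norm[F]{\phi}$) then delivers $\norm[F]{\Rproj{k-2}{F}\bvec{\sigma}}\lesssim a_F(\bvec{\sigma},\bvec{\sigma})^{\nicefrac12}$. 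Collecting yields $\norm[F]{\bvec{\sigma}}\lesssim a_F(\bvec{\sigma},\bvec{\sigma})^{\nicefrac12}+\norm[F]{\mathcal{L}_F}$.

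For $\bvec{\lambda}$, I would test \eqref{eq:serendipity.problem:F} with $(\bvec{\lambda},\bvec{0})$ (admissible since $\bvec{\lambda}\in\cRoly{\ell_F+1}(F)\subset\vPoly{k}(F)$), producing $\norm[F]{\bvec{\lambda}}^2 = a_F(\bvec{\sigma},\bvec{\lambda})-\mathcal{L}_F(\bvec{\lambda},\bvec{0})$; a Cauchy--Schwarz on $a_F$ combined with the elementary bound $a_F(\bvec{\lambda},\bvec{\lambda})\lesssim\norm[F]{\bvec{\lambda}}^2$ (from discrete trace and inverse inequalities applied to $\bvec{\lambda}\in\vPoly{k}(F)$) then gives $\norm[F]{\bvec{\lambda}}\lesssim a_F(\bvec{\sigma},\bvec{\sigma})^{\nicefrac12}+\norm[F]{\mathcal{L}_F}$. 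Finally, testing \eqref{eq:serendipity.problem:F} with $(\bvec{\sigma},\bvec{\lambda})$ itself makes the mixed terms cancel and leaves $a_F(\bvec{\sigma},\bvec{\sigma}) = \mathcal{L}_F(\bvec{\sigma},\bvec{\lambda})\le\norm[F]{\mathcal{L}_F}(\norm[F]{\bvec{\sigma}}+\norm[F]{\bvec{\lambda}})$; substituting the two previous bounds and absorbing via Young's inequality yields $a_F(\bvec{\sigma},\bvec{\sigma})\lesssim\norm[F]{\mathcal{L}_F}^2$, which closes \eqref{eq:serendipity.well-posedness:F}.
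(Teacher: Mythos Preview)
Your proof is correct and follows essentially the same route as the paper's: the same application of Lemma~\ref{lem:reconstruction.vpoly}, the same integration-by-parts argument to control $\Rproj{k-2}{F}\bvec{\sigma}$ via the $a_F$-energy, and the same test-function choices $(\bvec{0},\bvec{\mu})$, $(\bvec{\lambda},\bvec{0})$, $(\bvec{\sigma},\bvec{\lambda})$. The only organisational difference is that the paper packages these steps as an inf-sup condition on $\mathcal{A}_F$ (from which the a~priori bound follows), whereas you derive the a~priori bound on the solution directly and invoke dimension counting for existence.
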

\begin{proof}
  The estimate \eqref{eq:serendipity.well-posedness:F} classically follows from the following inf-sup condition on $\mathcal{A}_F$:
  For all $(\bvec{\upsilon},\bvec{\nu})\in\vPoly{k}(F)\times\cRoly{\ell_F+1}(F)$,
  \begin{equation}\label{eq:AF:inf-sup}
    \norm[F]{(\bvec{\upsilon},\bvec{\nu})}
    \lesssim\sup_{(\bvec{\tau},\bvec{\mu})\in\vPoly{k}(F)\times\cRoly{\ell_F+1}(F)\setminus\{(\bvec{0},\bvec{0})\}}
    \frac{\mathcal{A}_F((\bvec{\upsilon},\bvec{\nu}), (\bvec{\tau},\bvec{\mu}))}{\norm[F]{(\bvec{\tau},\bvec{\mu})}}
    \eqcolon\$.
  \end{equation}
  Let us prove this condition.
  Taking $(\bvec{\tau},\bvec{\mu}) = (\bvec{\upsilon},\bvec{\nu})$ in the definition \eqref{eq:AF} of $\mathcal{A}_F$, we obtain
  \begin{equation}\label{eq:serendipity.well-posedness:F:1}
    h_F\sum_{E\in\EF}\norm[E]{\bvec{\upsilon}\cdot\tangent_E}^2
    + h_F^2\norm[F]{\ROT_F\bvec{\upsilon}}^2
    = \mathcal{A}_F((\bvec{\upsilon},\bvec{\nu}), (\bvec{\upsilon},\bvec{\nu}))
    \le\$\norm[F]{(\bvec{\upsilon},\bvec{\nu})}.
  \end{equation}
  We next notice that, for all $q\in\Poly{0,k-1}(F)$,
  \[
  \int_F\ROT_F\bvec{\upsilon}~q
  = \int_F\Rproj{k-2}{F}\bvec{\upsilon}\cdot\VROT_F q
  - \sum_{E\in\EF}\omega_{FE}\int_E(\bvec{\upsilon}\cdot\tangent_E)~q.
  \]
  Taking $q$ such that $\VROT_Fq = \Rproj{k-2}{F}\bvec{\upsilon}$ (which is possible since $\VROT_F:\Poly{0,k-1}(F)\to\Roly{k-2}(F)$ is an isomorphism) and using Cauchy--Schwarz, discrete inverse, and trace inequalities along with $\norm[F]{q}\lesssim h_F\norm[F]{\VROT_F q} = h_F\norm[F]{\Rproj{k-2}{F}\bvec{\upsilon}}$ this gives, after simplifying and raising to the square,
  \begin{equation}\label{eq:serendipity.well-posedness:F:2}
    \norm[F]{\Rproj{k-2}{F}\bvec{\upsilon}}^2
    \lesssim h_F\sum_{E\in\EF}\norm[E]{\bvec{\upsilon}\cdot\tangent_E}^2
    + h_F^2\norm[F]{\ROT_F\bvec{\upsilon}}^2
    \lesssim\$\norm[F]{(\bvec{\upsilon},\bvec{\nu})},
  \end{equation}
  the conclusion being a consequence of \eqref{eq:serendipity.well-posedness:F:1}.
  On the other hand, taking $(\bvec{\tau},\bvec{\mu}) = (\bvec{0},\Rcproj{\ell_F+1}{F}\bvec{\upsilon})$ in the definition \eqref{eq:AF} of $\mathcal{A}_F$ and recalling the definition \eqref{eq:AF:inf-sup} of $\$$, we obtain
  \begin{equation}\label{eq:serendipity.well-posedness:F:3}
    \norm[F]{\Rcproj{\ell_F+1}{F}\bvec{\upsilon}}^2
    = \mathcal{A}_F((\bvec{\upsilon},\bvec{\nu}), (\bvec{0},\Rcproj{\ell_F+1}{F}\bvec{\upsilon}))
    \le\$\norm[F]{\Rcproj{\ell_F+1}{F}\bvec{\upsilon}}
    \le\$\norm[F]{\bvec{\upsilon}}
    \le\$\norm[F]{(\bvec{\upsilon},\bvec{\nu})},
  \end{equation}
  where the second inequality follows from the $L^2$-boundedness of $\Rcproj{\ell_F+1}{F}$.
  Summing \eqref{eq:serendipity.well-posedness:F:1}, \eqref{eq:serendipity.well-posedness:F:2} and \eqref{eq:serendipity.well-posedness:F:3} and recalling \eqref{eq:est.reconstruction.curl} along with \eqref{eq:def.ellF}, we obtain
  \begin{equation}\label{eq:serendipity.well-posedness:F:4}
    \norm[F]{\bvec{\upsilon}}^2\lesssim\$\norm[F]{(\bvec{\upsilon},\bvec{\nu})}.
  \end{equation}
  To estimate the $L^2$-norm of $\bvec{\nu}$, we notice that $\bvec{\nu}\in\vPoly{k}(F)$ owing to \eqref{eq:def.ellF}, and we write
  \[
  \begin{aligned}
    \norm[F]{\bvec{\nu}}^2
    &= \mathcal{A}_F((\bvec{\upsilon},\bvec{\nu}), (-\bvec{\nu},\bvec{0}))
    + h_F\sum_{E\in\EF}\int_E(\bvec{\upsilon}\cdot\tangent_E)~(\bvec{\nu}\cdot\tangent_E)
    + h_F^2\int_F\ROT_F\bvec{\upsilon}~\ROT_F\bvec{\nu}
    \\
    &\lesssim
    \$\norm[F]{\bvec{\nu}}
    + h_F^{\frac12}\left(
    \sum_{E\in\EF}\norm[E]{\bvec{\upsilon}\cdot\tangent_E}^2
    \right)^{\frac12}~\norm[F]{\bvec{\nu}}
    + h_F\norm[F]{\ROT_F\bvec{\upsilon}}~\norm[F]{\bvec{\nu}},
  \end{aligned}
  \]
  where we have used Cauchy--Schwarz, discrete trace, and inverse inequalities to pass to the second line.
  Combining this estimate with \eqref{eq:serendipity.well-posedness:F:1}, simplifying, and raising to the square, we conclude that
  \begin{equation}\label{eq:serendipity.well-posedness:F:5}
    \norm[F]{\bvec{\nu}}^2\lesssim \$^2 + \$\norm[F]{(\bvec{\upsilon},\bvec{\nu})}.
  \end{equation}
  Summing \eqref{eq:serendipity.well-posedness:F:4} and \eqref{eq:serendipity.well-posedness:F:5}, using Young's inequality and taking the square root gives \eqref{eq:AF:inf-sup}.
\end{proof}

\subsubsection{Serendipity spaces and operators}\label{sec:serendipity.ddr:faces}

The serendipity gradient and curl spaces on a face $F\in\Fh$ are respectively defined as
\begin{align*}
  \sXgrad{F}&\coloneq
  \left\{\su{q}_F=(\s{q}_F,\s{q}_{\EF})\st\text{$\s{q}_F\in\Poly{\ell_F}(F)$ and $\s{q}_{\EF}\in\Poly[c]{k+1}(\EF)$}\right\},\\
  \sXcurl{F}&\coloneq
  \Big\{
  \begin{aligned}[t]
    \suvec{v}_F&=(\sbvec{v}_{\cvec{R},F},\sbvec{v}_{\cvec{R},F}^\compl,(\s{v}_E)_{E\in\EF})
    \st
    \\
    &\qquad
    \text{$\sbvec{v}_{\cvec{R},F}\in\Roly{k-1}(F)$, $\sbvec{v}_{\cvec{R},F}^\compl\in\cRoly{\ell_F+1}(F)$,
    and $\s{v}_E\in\Poly{k}(E)$ for all $E\in\EF$}      
    \Big\}.
  \end{aligned}
\end{align*}
We note that, since these serendipity spaces are subspaces of $\Xgrad{F}$ and $\Xcurl{F}$, the components $L^2$-norms defined by \eqref{eq:tnorm} can be applied to their elements. 

The \emph{gradient serendipity operator on faces} $\SG{F}:\sXgrad{F}\to\vPoly{k}(F)$, the role of which is to reconstruct a consistent gradient (see Proposition \ref{prop:consistency.S.E.sfP} below), is defined, for all $\su{q}_F\in\sXgrad{F}$, as the component $\bvec{\sigma}$ of the unique solution to problem \eqref{eq:serendipity.problem:F} with right-hand side linear form $\mathcal{L}_F = \mathcal{L}_{\GRAD,F}$ given by
\begin{equation}\label{eq:LF.grad}
  \mathcal{L}_{\GRAD,F}(\su{q}_F;\bvec{\tau},\bvec{\mu})
  =
  h_F\sum_{E\in\EF}\int_E\s{q}_E'~(\bvec{\tau}\cdot\tangent_E)
  -\int_F \s{q}_F\DIV_F\bvec{\mu} + \sum_{E\in\EF}\omega_{FE}\int_E\s{q}_E~(\bvec{\mu}\cdot\normal_{FE}).
\end{equation}
Combining the estimate \eqref{eq:serendipity.well-posedness:F} with Cauchy--Schwarz and discrete trace and inverse inequalities, we infer
\begin{equation}\label{eq:continuity.SGF}
  \norm[F]{\SG{F}\su{q}_F}
  \lesssim h_F^{-1}\tnorm[\GRAD,F]{\su{q}_F}
  \qquad\forall\su{q}_F\in\sXgrad{F}.
\end{equation}

The \emph{curl serendipity operator on faces} $\SC{F}:\sXcurl{F}\to\vPoly{k}(F)$, which reconstructs a consistent vector potential (as shown by Proposition \ref{prop:consistency.S.E.sfP} below), is defined for all $\suvec{v}_F\in\sXcurl{F}$ as the component $\bvec{\sigma}$ of the unique solution to problem \eqref{eq:serendipity.problem:F} with right-hand side linear form $\mathcal{L}_F = \mathcal{L}_{\CURL,F}$ given by
\begin{multline}\label{eq:LF.curl}
  \mathcal{L}_{\CURL,F}(\suvec{v}_F;\bvec{\tau},\bvec{\mu})
  = h_F\sum_{E\in\EF}\int_E \s{v}_E~(\bvec{\tau}\cdot\tangent_E)
  \\
  + h_F^2\bigg(
  \int_F\sbvec{v}_{\cvec{R},F}\cdot\VROT_F\ROT_F\bvec{\tau}
  - \sum_{E\in\EF}\omega_{FE}\int_E \s{v}_E~\ROT_F\bvec{\tau}
  \bigg)
  + \int_F\sbvec{v}_{\cvec{R},F}^\compl\cdot\bvec{\mu}.
\end{multline}
The estimate \eqref{eq:serendipity.well-posedness:F} along with Cauchy--Schwarz and discrete trace and inverse inequalities yields
\begin{equation}\label{eq:continuity.SCF}
  \norm[F]{\SC{F}\suvec{v}_F}
  \lesssim\tnorm[\CURL,F]{\suvec{v}_F}
  \qquad\forall\suvec{v}_F\in\sXcurl{F}.
\end{equation}

\subsubsection{Extensions and reductions}

The \emph{extensions on faces} $\Egrad{F}:\sXgrad{F}\to\Xgrad{F}$ and $\Ecurl{F}:\sXcurl{F}\to\Xcurl{F}$ are such that
\begin{alignat}{4} \label{eq:Egrad.F}
  \Egrad{F}\su{q}_F
  &= (\EPoly{F}\su{q}_F, \s{q}_{\EF})
  &\qquad&\forall\su{q}_F\in\sXgrad{F},
  \\ \label{eq:Ecurl.F}
  \Ecurl{F}\suvec{v}_F
  &= (\sbvec{v}_{\cvec{R},F}, \Rcproj{k}{F}\SC{F}\suvec{v}_F, (\s{v}_E)_{E\in\EF})
  &\qquad&\forall\suvec{v}_F\in\sXcurl{F},
\end{alignat}
with $\EPoly{F}:\sXgrad{F}\to\Poly{k-1}(F)$ defined through a formal integration-by-parts using the serendipity gradient:
For all $\bvec{w}_F\in\cRoly{k}(F)$,
\begin{equation}\label{eq:def.hatErF}
  \int_F\EPoly{F}\su{q}_F\DIV_F\bvec{w}_F
  = - \int_F\SG{F}\su{q}_F\cdot\bvec{w}_F
  + \sum_{E\in\EF}\omega_{FE}{}\int_E \s{q}_E~(\bvec{w}_F\cdot\normal_{FE}).
\end{equation}
Using discrete trace inequalities, \eqref{eq:continuity.SGF}, \eqref{eq:continuity.SCF}, and the isomorphism property of $\DIV_F:\cRoly{k}(F)\to\Poly{k-1}(F)$ stated in \cite[Lemma 9]{Di-Pietro.Droniou:21*2}, we get the following continuity properties:
\begin{alignat}{4}
\label{eq:est.EgradF}
\tnorm[\GRAD,F]{\Egrad{F}\su{q}_F}\lesssim{}& \tnorm[\GRAD,F]{\su{q}_F}&\qquad&\forall\su{q}_F\in\sXgrad{F},\\
\label{eq:est.EcurlF}
\tnorm[\CURL,F]{\Ecurl{F}\suvec{v}_F}\lesssim{}& \tnorm[\CURL,F]{\suvec{v}_F}&\qquad&\forall\suvec{v}_F\in\sXcurl{F}.
\end{alignat}

The \emph{reductions on faces}, which also define the interpolators $\sIgrad{F}$ on $\sXgrad{F}$ and $\sIcurl{F}$ on $\sXcurl{F}$ through \eqref{eq:def.tIg}, are $\Rgrad{F}:\Xgrad{F}\to\sXgrad{F}$ and $\Rcurl{F}:\Xcurl{F}\to\sXcurl{F}$ such that
\begin{alignat}{4}\label{eq:R.grad.F}
  \Rgrad{F}\underline{q}_F
  &= (\lproj{\ell_F}{F}q_F, q_{\EF})
  &\qquad&\forall\underline{q}_F\in\Xgrad{F},
  \\ \label{eq:R.curl.F}
  \Rcurl{F}\uvec{v}_F
  &= (\bvec{v}_{\cvec{R},F}, \Rcproj{\ell_F+1}{F}\bvec{v}_{\cvec{R},F}^\compl, (v_E)_{E\in\EF})
  &\qquad&\forall\uvec{v}_F\in\Xcurl{F}.
\end{alignat}

\subsection{Serendipity, extension, and reduction operators on elements}

\subsubsection{Serendipity problem}

For a fixed element $T\in\Th$, recalling that $\eta_T$ is given by Assumption \ref{assum:choice.detaP}, we let
\begin{equation}\label{eq:def.ellT}
\ell_T\coloneq k+1-\eta_T\le k-1
\end{equation}
and, given a linear form $\mathcal{L}_T:\vPoly{k}(T)\times\cRoly{\ell_T+1}(T)\to\Real$, consider the following problem:
Find $(\bvec{\sigma},\bvec{\lambda})\in\vPoly{k}(T)\times\cRoly{\ell_T+1}(T)$ such that
\begin{equation}\label{eq:serendipity.problem:T}
  \mathcal{A}_T((\bvec{\sigma},\bvec{\lambda}), (\bvec{\tau},\bvec{\mu}))
  = \mathcal{L}_T(\bvec{\tau},\bvec{\mu})\qquad
  \forall (\bvec{\tau},\bvec{\mu})\in\vPoly{k}(T)\times\cRoly{\ell_T+1}(T),
\end{equation}
with bilinear form $\mathcal{A}_T:[\vPoly{k}(T)\times\cRoly{\ell_T+1}(T)]^2\to\Real$ such that
\begin{equation*}
  \mathcal{A}_T((\bvec{\upsilon},\bvec{\nu}), (\bvec{\tau},\bvec{\mu}))
  \coloneq
  h_T\sum_{F\in\FT}\int_{F}\bvec{\upsilon}_{{\rm t},F}\cdot\bvec{\tau}_{{\rm t},F}
  + h_T^2\int_T\CURL\bvec{\upsilon}\cdot\CURL\bvec{\tau}
  + \int_T\bvec{\upsilon}\cdot\bvec{\mu} - \int_T\bvec{\tau}\cdot\bvec{\nu}.
\end{equation*}
We remind the reader that, for any $\bvec{\upsilon}:T\to\Real^3$ smooth enough and any $F\in\FT$, $\bvec{\upsilon}_{{\rm t},F}\coloneq\normal_F\times(\bvec{\upsilon}_{|F}\times\normal_F)$ denotes the tangential component of $\bvec{\upsilon}$ on $F$.
Proceeding as in the proof of Proposition \ref{prop:serendipity.well-posedness:F}, we can prove the following result.

\begin{proposition}[Well-posedness of the serendipity problem on elements]\label{prop:serendipity.well-posedness:T}
  The serendipity problem \eqref{eq:serendipity.problem:T} admits a unique solution which satisfies
  \begin{equation}\label{eq:serendipity.well-posedness:T}
    \norm[T]{\bvec{\sigma}} + \norm[T]{\bvec{\lambda}}
    \lesssim \norm[T]{\mathcal{L}_T}
  \end{equation}
  where $\norm[T]{\mathcal{L}_T}$ is the norm of the linear form $\mathcal{L}_T$ induced by the $L^2$-norm on $\vPoly{k}(T)\times\cRoly{\ell_T+1}(T)$.
\end{proposition}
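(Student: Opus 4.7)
The plan is to mirror almost verbatim the argument given for Proposition \ref{prop:serendipity.well-posedness:F}, transposing each ingredient from a 2D face to a 3D element. Since the bilinear form $\mathcal{A}_T$ has exactly the same skew-symmetric block structure as $\mathcal{A}_F$, uniqueness (hence existence, by finite-dimensionality) and the stability estimate \eqref{eq:serendipity.well-posedness:T} both follow from an inf-sup condition
\[
\norm[T]{(\bvec{\upsilon},\bvec{\nu})}
\lesssim\sup_{(\bvec{\tau},\bvec{\mu})\neq(\bvec{0},\bvec{0})}
\frac{\mathcal{A}_T((\bvec{\upsilon},\bvec{\nu}),(\bvec{\tau},\bvec{\mu}))}{\norm[T]{(\bvec{\tau},\bvec{\mu})}}
\eqcolon\$,
\]
so the task reduces to producing this inf-sup bound by testing against a handful of well-chosen pairs.

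The testing proceeds in four steps. \textbf{Step 1:} Taking $(\bvec{\tau},\bvec{\mu})=(\bvec{\upsilon},\bvec{\nu})$, the off-diagonal terms cancel and we obtain directly that $h_T\sum_{F\in\FT}\norm[F]{\bvec{\upsilon}_{{\rm t},F}}^2+h_T^2\norm[T]{\CURL\bvec{\upsilon}}^2\lesssim\$\norm[T]{(\bvec{\upsilon},\bvec{\nu})}$, the analog of \eqref{eq:serendipity.well-posedness:F:1}. \textbf{Step 2:} To estimate $\Rproj{k-2}{T}\bvec{\upsilon}$, we use integration by parts for the curl:
\[
\int_T\CURL\bvec{\upsilon}\cdot\bvec{w}
=\int_T\bvec{\upsilon}\cdot\CURL\bvec{w}
+\sum_{F\in\FT}\omega_{TF}\int_F\bvec{\upsilon}_{{\rm t},F}\cdot(\bvec{w}\times\normal_F),
\]
then pick $\bvec{w}\in\cGoly{k-1}(T)$ such that $\CURL\bvec{w}=\Rproj{k-2}{T}\bvec{\upsilon}$, which is possible since $\CURL:\cGoly{k-1}(T)\to\Roly{k-2}(T)$ is an isomorphism (the decomposition \eqref{eq:vPoly=Goly+cGoly} ensures injectivity, and $\Roly{k-2}(T)=\CURL\Poly{k-1}(T)=\CURL\cGoly{k-1}(T)$ gives surjectivity), and with $\norm[T]{\bvec{w}}\lesssim h_T\norm[T]{\CURL\bvec{w}}$ by a Poincaré-type bound on $\cGoly{k-1}(T)$. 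Cauchy--Schwarz, a discrete trace inequality, and Step 1 then yield $\norm[T]{\Rproj{k-2}{T}\bvec{\upsilon}}^2\lesssim\$\norm[T]{(\bvec{\upsilon},\bvec{\nu})}$. \textbf{Step 3:} Testing with $(\bvec{\tau},\bvec{\mu})=(\bvec{0},\Rcproj{\ell_T+1}{T}\bvec{\upsilon})$ and using $L^2$-boundedness of the projector gives $\norm[T]{\Rcproj{\ell_T+1}{T}\bvec{\upsilon}}^2\lesssim\$\norm[T]{(\bvec{\upsilon},\bvec{\nu})}$, exactly as in \eqref{eq:serendipity.well-posedness:F:3}.

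\textbf{Step 4:} Now apply the 3D reconstruction estimate \eqref{eq:est.reconstruction.curl} with $\sfP=T$ — this is the key place where the size $\eta_T$ of the boundary selection enters through \eqref{eq:def.ellT}, exactly matching the projector index $\ell_T+1$ appearing in Step 3. Combining with Steps 1--3, we obtain $\norm[T]{\bvec{\upsilon}}^2\lesssim\$\norm[T]{(\bvec{\upsilon},\bvec{\nu})}$. Finally, to control $\bvec{\nu}$, test with $(\bvec{\tau},\bvec{\mu})=(-\bvec{\nu},\bvec{0})$, which produces $\norm[T]{\bvec{\nu}}^2$ on the left plus boundary and $\CURL$ cross-terms against $\bvec{\nu}$; bounding the latter via Cauchy--Schwarz, discrete trace/inverse inequalities, and Step 1 gives $\norm[T]{\bvec{\nu}}^2\lesssim\$^2+\$\norm[T]{(\bvec{\upsilon},\bvec{\nu})}$. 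Summing all the $L^2$-bounds on $\bvec{\upsilon}$ and $\bvec{\nu}$, absorbing via Young's inequality, and taking square roots produces the inf-sup bound, from which \eqref{eq:serendipity.well-posedness:T} follows by the standard duality argument.

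The only real subtlety, as compared to the 2D case, is locating the correct isomorphism in Step 2: on the face, the single scalar potential space $\Poly{0,k-1}(F)$ realizes $\Roly{k-2}(F)$ via $\VROT_F$, whereas in 3D we must instead work on the \emph{vector} space $\cGoly{k-1}(T)$ and verify both the surjectivity/injectivity of $\CURL$ on it and the accompanying Poincaré-type estimate $\norm[T]{\bvec{w}}\lesssim h_T\norm[T]{\CURL\bvec{w}}$. Everything else — boundary trace inequalities, $L^2$-boundedness of $\Rcproj{}{T}$, and the final combination — is a direct transposition.
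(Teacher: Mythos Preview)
Your proposal is correct and follows exactly the route the paper intends: the paper's own proof consists of the single sentence ``Proceeding as in the proof of Proposition \ref{prop:serendipity.well-posedness:F}, we can prove the following result,'' and your four-step transposition is precisely that procedure, with the correct 3D substitution of the isomorphism $\CURL:\cGoly{k-1}(T)\to\Roly{k-2}(T)$ (with its Poincar\'e-type bound, cf.~\cite[Lemma 9]{Di-Pietro.Droniou:21*2}) for the 2D isomorphism $\VROT_F:\Poly{0,k-1}(F)\to\Roly{k-2}(F)$.
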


\subsubsection{Serendipity spaces and operators}\label{sec:serendipity.ddr:elements}

The serendipity spaces on an element $T\in\Th$ are defined as
\begin{align*}
  \sXgrad{T}&\coloneq
  \Big\{
    \begin{aligned}[t]
      \su{q}_T=(\s{q}_T,(\s{q}_F)_{F\in\FT},\s{q}_{\ET})\st{}&
       \text{$\s{q}_T\in\Poly{\ell_T}(T)$, $\s{q}_F\in\Poly{\ell_F}(F)$ for all $F\in\FT$},\\
      & \text{and $\s{q}_{\ET}\in\Poly[c]{k+1}(\ET)$}
    \Big\},
    \end{aligned}\\
  \sXcurl{T}&\coloneq
  \Big\{
  \begin{aligned}[t]
    \suvec{v}_T&=(\sbvec{v}_{\cvec{R},T},\sbvec{v}_{\cvec{R},T}^\compl,(\sbvec{v}_{\cvec{R},F},\sbvec{v}_{\cvec{R},F}^\compl)_{F\in\FT},(\s{v}_E)_{E\in\ET})
    \st
    \\
    &\qquad
    \text{$\sbvec{v}_{\cvec{R},T}\in\Roly{k-1}(T)$ and $\sbvec{v}_{\cvec{R},T}^\compl\in\cRoly{\ell_T+1}(T)$,}\\
    &\qquad
    \text{$\sbvec{v}_{\cvec{R},F}\in\Roly{k-1}(F)$ and $\sbvec{v}_{\cvec{R},F}^\compl\in\cRoly{\ell_F+1}(F)$ for all $F\in\FT$,}\\
    &\qquad
    \text{and $\s{v}_E\in\Poly{k}(E)$ for all $E\in\ET$}      
    \Big\}.
  \end{aligned}
\end{align*}
The component norms defined by \eqref{eq:tnorm} naturally apply to these spaces.

The \emph{gradient serendipity operator on elements} $\SG{T}:\sXgrad{T}\to\vPoly{k}(T)$ is defined, for all $\su{q}_T\in\sXgrad{T}$, as the component $\bvec{\sigma}$ of the unique solution to problem \eqref{eq:serendipity.problem:T} with $\mathcal{L}_T = \mathcal{L}_{\GRAD,T}$, where
\begin{multline}\label{eq:LT.grad}
  \mathcal{L}_{\GRAD,T}(\su{q}_T;\bvec{\tau},\bvec{\mu})
  =
  h_T\sum_{F\in\FT}\int_F\cGF\Egrad{F}\su{q}_F\cdot\bvec{\tau}_{{\rm t},F}
  \\
  -\int_T \s{q}_T\DIV\bvec{\mu}
  + \sum_{F\in\FT}\omega_{TF}\int_F\trF\Egrad{F}\su{q}_F~(\bvec{\mu}\cdot\normal_F).
\end{multline}
Combining the a priori estimate \eqref{eq:serendipity.well-posedness:T} with Cauchy--Schwarz and discrete trace and inverse inequalities,
observing that $\norm[F]{\cGF\Egrad{F}\su{q}_F}\lesssim h_F^{-1}\tnorm[\GRAD,F]{\su{q}_F}$ by \eqref{eq:GT.boundedness} for $(\sfP,\underline{q}_\sfP) = (F,\Egrad{F}\su{q}_F)$ followed by \eqref{eq:est.EgradF}, and that $\norm[F]{\trF\Egrad{F}\su{q}_F}\lesssim\tnorm[\GRAD,F]{\su{q}_F}$ (consequence of \cite[Proposition 6]{Di-Pietro.Droniou:21*2} followed by \eqref{eq:est.EgradF}), we have
\begin{equation}\label{eq:continuity.SGT}
  \norm[T]{\SG{T}\su{q}_T}
  \lesssim h_T^{-1}\tnorm[\GRAD,T]{\su{q}_T}
  \qquad\forall\su{q}_T\in\sXgrad{T}.
\end{equation}

The \emph{curl serendipity operator on elements} $\SC{T}:\sXcurl{T}\to\vPoly{k}(T)$ is defined, for all $\suvec{v}_T\in\sXcurl{T}$, as the component $\bvec{\sigma}$ of the unique solution to problem \eqref{eq:serendipity.problem:T} with $\mathcal{L}_T\coloneq \mathcal{L}_{\CURL,T}$ defined by
\begin{multline}\label{eq:LT.curl}
  \mathcal{L}_{\CURL,T}(\suvec{v}_T;\bvec{\tau},\bvec{\mu})
  = h_T\sum_{F\in\FT}\int_F \trFt\Ecurl{F}\suvec{v}_F\cdot\bvec{\tau}_{{\rm t},F}
  \\
  + h_T^2\left(
  \int_T\sbvec{v}_{\cvec{R},T}\cdot\CURL\CURL\bvec{\tau}
  + \sum_{F\in\FT}\omega_{TF}\int_F\trFt\Ecurl{F}\suvec{v}_F\cdot(\CURL\bvec{\tau}\times\normal_F)
  \right)
  + \int_T\sbvec{v}_{\cvec{R},T}^\compl\cdot\bvec{\mu}.
\end{multline}
Using the a priori estimate \eqref{eq:serendipity.well-posedness:T}, Cauchy--Schwarz and discrete trace and inverse inequalities, as well as estimates of $\trFt\Ecurl{F}\suvec{v}_F$ following from \cite[Proposition 6]{Di-Pietro.Droniou:21*2} and \eqref{eq:est.EcurlF}, we have
\begin{equation}\label{eq:continuity.SCT}
  \norm[T]{\SC{T}\suvec{v}_T}
  \lesssim
  \tnorm[\CURL,T]{\suvec{v}_T}
  \qquad\forall\suvec{v}_T\in\sXcurl{T}.
\end{equation}

\subsubsection{Extensions and reductions}

The \emph{extensions on elements} are $\Egrad{T}:\sXgrad{T}\to\Xgrad{T}$ and $\Ecurl{T}:\sXcurl{T}\to\Xcurl{T}$ such that
\begin{alignat}{4} \label{eq:Egrad.T}
  \Egrad{T}\su{q}_T
  &= (\EPoly{T}\su{q}_T, (\EPoly{F}\su{q}_F)_{F\in\FT}, \s{q}_{\ET})
  &\enspace&\forall\su{q}_T\in\sXgrad{T},
  \\ \label{eq:Ecurl.T}
  \Ecurl{T}\suvec{v}_T
  &= (
  \sbvec{v}_{\cvec{R},T}, \Rcproj{k}{T}\SC{T}\suvec{v}_T,
  (\sbvec{v}_{\cvec{R},F}, \Rcproj{k}{F}\SC{F}\suvec{v}_F)_{F\in\FT}, (\s{v}_E)_{E\in\ET})
  &\enspace&\forall\suvec{v}_T\in\sXcurl{T},
\end{alignat}
with $\EPoly{F}$ defined by \eqref{eq:def.hatErF} and $\EPoly{T}:\sXgrad{T}\to\Poly{k-1}(T)$ such that, for all $\bvec{w}_T\in\cRoly{k}(T)$,
\begin{equation*}
  \int_T\EPoly{T}\su{q}_T\DIV\bvec{w}_T
  = - \int_T\SG{T}\su{q}_T\cdot\bvec{w}_T
  + \sum_{F\in\FT}\omega_{TF}\int_F \trF\Egrad{F}\su{q}_F~(\bvec{w}_T\cdot\normal_{F}).
\end{equation*}
Proceeding in a similar way as for the face extensions, \eqref{eq:continuity.SGT} and \eqref{eq:continuity.SCT} yield
\begin{alignat}{4}
   \label{eq:cont.Egrad}
  \tnorm[\GRAD,T]{\Egrad{T}\su{q}_T}
  &\lesssim \tnorm[\GRAD,T]{\su{q}_T}
  &\qquad&\forall\su{q}_T\in\sXgrad{T},\\
  \nonumber
  \tnorm[\CURL,T]{\Ecurl{T}\suvec{v}_T}
  &\lesssim \tnorm[\CURL,T]{\suvec{v}_T}
  &\qquad&\forall\suvec{v}_T\in\sXcurl{T}.
\end{alignat}

The \emph{reductions on elements} are $\Rgrad{T}:\Xgrad{T}\to\sXgrad{T}$ and $\Rcurl{T}:\Xcurl{T}\to\sXcurl{T}$ such that
\begin{alignat}{4}\label{eq:R.grad.T}
  \Rgrad{T}\underline{q}_T
  &= (\RPoly{T}\underline{q}_T,(\lproj{\ell_F}{F}q_F)_{F\in\FT},q_{\ET})
  &\quad&\forall\underline{q}_T\in\Xgrad{T},\\ \label{eq:R.curl.T}
  \Rcurl{T}\uvec{v}_T
  &= (\RRoly{T}\uvec{v}_T, \Rcproj{\ell_T+1}{T}\bvec{v}_{\cvec{R},T}^\compl,(\bvec{v}_{\cvec{R},F},\Rcproj{\ell_F+1}{F}\bvec{v}_{\cvec{R},F}^\compl)_{F\in\FT},(v_E)_{E\in\ET})
  &\quad&\forall\uvec{v}_T\in\Xcurl{T},
\end{alignat}
where $\RPoly{T}\underline{q}_T\in\Poly{\ell_T}(T)$ is such that, for all $\bvec{w}_T\in\cRoly{\ell_T+1}(T)$,
\begin{equation}\label{eq:def.hatRqT}
  \int_T\RPoly{T}\underline{q}_T\DIV\bvec{w}_T
  =-\int_T\cGT\underline{q}_T\cdot\bvec{w}_T
  + \sum_{F\in\FT}\omega_{TF}\int_F \trF\Egrad{F}\Rgrad{F}\underline{q}_F~(\bvec{w}_T\cdot\normal_F)
\end{equation}
while $\RRoly{T}\uvec{v}_T\in\Roly{k-1}(T)$ is such that, for all $\bvec{w}_T\in\cGoly{k}(T)$,
\begin{equation}
  \label{eq:def.hatu}
  \int_T \RRoly{T}\uvec{v}_T\cdot\CURL\bvec{w}_T
  = \int_T\cCT\uvec{v}_T\cdot\bvec{w}_T
  - \sum_{F\in\FT}\omega_{TF}\int_F \trFt\Ecurl{F}\Rcurl{F}\uvec{v}_F\cdot (\bvec{w}_T\times\normal_F).
\end{equation}

\section{Properties of the SDDR complex}\label{sec:prop.sddr}

\subsection{Preliminary results}

\begin{proposition}[Polynomial consistency of the serendipity operators and extensions]\label{prop:consistency.S.E.sfP}
  The serendipity operators and extensions enjoy the following polynomial consistency properties:
  For all $\sfP\in\Th\cup\Fh$,
  \begin{alignat}{4}
    \label{eq:SG.sfP:polynomial.consistency}
    \SG{\sfP}\sIgrad{\sfP} q ={}& \GRAD_\sfP q &\qquad&\forall q\in\Poly{k+1}(\sfP),\\
    \label{eq:EG.sfP:polynomial.consistency}
    \Egrad{\sfP}\sIgrad{\sfP}q ={}& \Igrad{\sfP}q&\qquad&\forall q\in\Poly{k+1}(\sfP),\\ 
    \label{eq:SC.sfP:polynomial.consistency}
    \SC{\sfP}\sIcurl{\sfP}\bvec{v} ={}& \bvec{v}&\qquad&\forall\bvec{v}\in\vPoly{k}(\sfP),\\
    \label{eq:EC.sfP:polynomial.consistency}
    \Ecurl{\sfP}\sIcurl{\sfP}\bvec{v} ={}& \Icurl{\sfP}\bvec{v}&\qquad&\forall\bvec{v}\in\vPoly{k}(\sfP).
    \end{alignat}
\end{proposition}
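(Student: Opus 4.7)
I would organise the proof into two layers, treating the face case ($\sfP=F$) first and then the element case ($\sfP=T$), because the element linear forms $\mathcal{L}_{\GRAD,T}$ and $\mathcal{L}_{\CURL,T}$ involve the face extensions $\Egrad{F}$ and $\Ecurl{F}$ applied to the face components of the element interpolate, which can only be simplified once \eqref{eq:EG.sfP:polynomial.consistency}--\eqref{eq:EC.sfP:polynomial.consistency} are known on faces. Within each layer, the two serendipity identities \eqref{eq:SG.sfP:polynomial.consistency} and \eqref{eq:SC.sfP:polynomial.consistency} serve as stepping stones to the corresponding extension identities \eqref{eq:EG.sfP:polynomial.consistency} and \eqref{eq:EC.sfP:polynomial.consistency}.

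For the face serendipity identities, the strategy is to show that the candidate pair $(\bvec{\sigma},\bvec{\lambda})=(\GRAD_F q,\bvec{0})$, respectively $(\bvec{v},\bvec{0})$, solves the saddle-point problem \eqref{eq:serendipity.problem:F} with right-hand side $\mathcal{L}_{\GRAD,F}(\sIgrad{F}q;\cdot)$, resp.~$\mathcal{L}_{\CURL,F}(\sIcurl{F}\bvec{v};\cdot)$; uniqueness from Proposition \ref{prop:serendipity.well-posedness:F} then concludes. The computation proceeds by (i) tangential integration by parts, (ii) degree-counting that makes the $L^2$-projections $\lproj{\ell_F}{F}$ and $\Rcproj{\ell_F+1}{F}$ hidden in $\sIgrad{F}q$ and $\sIcurl{F}\bvec{v}$ transparent against the test spaces, exploiting $\DIV_F\cRoly{\ell_F+1}(F)\subset\Poly{\ell_F}(F)$, $\VROT_F\Poly{k-1}(F)\subset\Roly{k-2}(F)$, and the hierarchy relation \eqref{eq:Xcproj.ell+1.k}, and (iii) the kernel identity $\ROT_F\GRAD_F q=0$ in the gradient case (respectively, standard manipulations of $\VROT_F\ROT_F\bvec{\tau}$ against $\Rproj{k-1}{F}\bvec{v}$ in the curl case), which kills the $\ROT_F$ term in $\mathcal{A}_F$. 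The face extension identities then follow from substituting $\SG{F}\sIgrad{F}q=\GRAD_F q$ (resp.~$\SC{F}\sIcurl{F}\bvec{v}=\bvec{v}$) into \eqref{eq:def.hatErF} and \eqref{eq:Ecurl.F}, performing one further IBP, and using the isomorphism $\DIV_F:\cRoly{k}(F)\to\Poly{k-1}(F)$ of \cite[Lemma 9]{Di-Pietro.Droniou:21*2} to identify the resulting polynomial with $\lproj{k-1}{F}q$; the other components match by inspection.

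The element case is handled by the same blueprint. The extra ingredient is that the face contributions in $\mathcal{L}_{\GRAD,T}$ and $\mathcal{L}_{\CURL,T}$ must first be simplified via the face-level identities just proved: $\Egrad{F}\sIgrad{F}(q_{|F})=\Igrad{F}(q_{|F})$ and $\Ecurl{F}\sIcurl{F}(\bvec{v}_{|F})=\Icurl{F}(\bvec{v}_{|F})$. Combined with the standard DDR polynomial consistency of $\cGF$, $\trF$, and $\trFt$ on interpolated polynomials (inherited from \cite{Di-Pietro.Droniou:21*2}), the face sums in $\mathcal{L}_{\GRAD,T}$ and $\mathcal{L}_{\CURL,T}$ become the boundary integrals that naturally arise from an IBP on $T$. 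Matching $\mathcal{A}_T((\GRAD q,\bvec{0}),\cdot)$ then uses $\CURL\GRAD q=0$, while the curl case uses $\int_T\Rproj{k-1}{T}\bvec{v}\cdot\CURL\CURL\bvec{\tau}=\int_T\bvec{v}\cdot\CURL\CURL\bvec{\tau}$ followed by IBP over $T$. Uniqueness from Proposition \ref{prop:serendipity.well-posedness:T} yields \eqref{eq:SG.sfP:polynomial.consistency} and \eqref{eq:SC.sfP:polynomial.consistency}, and \eqref{eq:EG.sfP:polynomial.consistency}--\eqref{eq:EC.sfP:polynomial.consistency} on elements follow as for faces, using the isomorphism $\DIV:\cRoly{k}(T)\to\Poly{k-1}(T)$.

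The main obstacle is not any single algebraic step but the bookkeeping: one must ensure that each $L^2$-projection hidden in the interpolate is absorbed by the test space, which is precisely what the calibration $\ell_\sfP=k+1-\eta_\sfP$ and the test spaces $\vPoly{k}(\sfP)\times\cRoly{\ell_\sfP+1}(\sfP)$ in the serendipity problems are designed for, and one must chain the face identities into the element verification in the right order. Once this bookkeeping is set up, each of the four identities reduces to an integration by parts and a uniqueness argument.
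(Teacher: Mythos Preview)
Your proposal is correct and follows essentially the same approach as the paper: verify that $(\GRAD_\sfP q,\bvec{0})$ and $(\bvec{v},\bvec{0})$ solve the respective serendipity problems, invoke uniqueness, then feed the result into the definitions of $\EPoly{\sfP}$ and $\Ecurl{\sfP}$; do faces before elements so the face extension identities are available in the element linear forms.

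One point deserves more care than your wording suggests. You describe the element-level bookkeeping as ``each $L^2$-projection hidden in the interpolate is absorbed by the test space'', but the element components of $\sIgrad{T}q=\Rgrad{T}\Igrad{T}q$ and $\sIcurl{T}\bvec{v}=\Rcurl{T}\Icurl{T}\bvec{v}$ are \emph{not} $L^2$-projections: they are $\RPoly{T}\Igrad{T}q$ and $\RRoly{T}\Icurl{T}\bvec{v}$, defined through \eqref{eq:def.hatRqT} and \eqref{eq:def.hatu}, which involve $\cGT$, $\trF$, $\cCT$, $\trFt$, and the face extension/reduction. The paper therefore inserts a short preliminary step in the element case: using the face identity $\Egrad{F}\Rgrad{F}\Igrad{F}q=\Igrad{F}q$ together with the DDR polynomial consistency of $\cGT$ and $\trF$, one checks from \eqref{eq:def.hatRqT} that $\RPoly{T}\Igrad{T}q=\lproj{\ell_T}{T}q$ (and analogously $\RRoly{T}\Icurl{T}\bvec{v}=\Rproj{k-1}{T}\bvec{v}$). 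Only after this identification does the volume term $-\int_T\s{q}_T\,\DIV\bvec{\mu}$ reduce to $-\int_T\lproj{\ell_T}{T}q\,\DIV\bvec{\mu}$ and your degree-counting argument go through. You have all the ingredients for this step in hand; just be explicit that it precedes the $\mathcal{A}_T$-matching.
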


\begin{proof}
  Let us first consider the case $\sfP=F\in\Fh$.
  Let $q\in\Poly{k+1}(F)$. By uniqueness of the solution to \eqref{eq:serendipity.problem:F}, \eqref{eq:SG.sfP:polynomial.consistency} follows from proving that $(\GRAD_F q,\bvec{0})$ solves this problem with $\mathcal{L}_F=\mathcal{L}_{\GRAD,F}$ defined by \eqref{eq:LF.grad} with $\su{q}_F=\sIgrad{F}q$.
  Recalling the definition \eqref{eq:AF} of $\mathcal{A}_F$, it holds, for all $(\bvec{\tau},\bvec{\mu})\in\vPoly{k}(F)\times\cRoly{\ell_F+1}(F)$,  
  \[
  \begin{aligned}
    &\mathcal{A}_F((\GRAD_F q,\bvec{0}), (\bvec{\tau},\bvec{\mu}))
    = h_F\sum_{E\in\EF}\int_E(\GRAD_F q\cdot\tangent_E)~(\bvec{\tau}\cdot\tangent_E)
    + \int_F\GRAD_F q\cdot\bvec{\mu}
    \\
    &\qquad
    = h_F\sum_{E\in\EF}\int_E q_{|E}'~(\bvec{\tau}\cdot\tangent_E)
    - \int_F \lproj{\ell_F}{F} q~\DIV_F\bvec{\mu}
    + \sum_{E\in\EF}\omega_{FE}\int_E \lproj{k-1}{E} q~(\bvec{\mu}\cdot\normal_{FE})
    \\
    &\qquad
    = \mathcal{L}_{\GRAD,F}(\sIgrad{F} q; \bvec{\tau},\bvec{\mu}),
  \end{aligned}
  \]
  where we have used $\ROT_F\GRAD_F=0$ in the first line,
  an integration by parts along with the fact that $\DIV_F\bvec{\mu}\in\Poly{\ell_F}(F)$ and, for all $E\in\EF$, $\bvec{\mu}\cdot\normal_{FE}\in\Poly{\ell_F}(E)\subset\Poly{k-1}(E)$ (see \cite[Proposition 8]{Di-Pietro.Droniou:21*2}) to insert $\lproj{\ell_F}{F}$ and $\lproj{k-1}{E}$ in the second line,
  and the definition of $\sIgrad{F}$ (based on \eqref{eq:def.tIg} with full interpolator and reduction respectively given by \eqref{eq:Igradh} and \eqref{eq:R.grad.F}) together with \eqref{eq:LF.grad} to conclude.
  This proves \eqref{eq:SG.sfP:polynomial.consistency} for $\sfP=F$.
  From this result, using an integration by parts in \eqref{eq:def.hatErF} together with the fact that $\DIV_F:\cRoly{k}(F)\to\Poly{k-1}(F)$ is an isomorphism, we infer that $\EPoly{F}\sIgrad{F}q=\lproj{k-1}{F}q$, which gives \eqref{eq:EG.sfP:polynomial.consistency}.
  \smallskip

  We proceed similarly to prove \eqref{eq:SC.sfP:polynomial.consistency} for $\sfP=F$.
  Specifically, let $\bvec{v}\in\vPoly{k}(F)$.
  For all $(\bvec{\tau},\bvec{\mu})\in\vPoly{k}(F)\times\cRoly{\ell_F+1}(F)$, it holds
  \[
  \begin{aligned}
    &\mathcal{A}_F ((\bvec{v},\bvec{0}),(\bvec{\tau},\bvec{\mu}))
    = h_F\sum_{E\in\EF}\int_E(\bvec{v}\cdot\tangent_E)~(\bvec{\tau}\cdot\tangent_E)
    + h_F^2\int_F\ROT_F\bvec{v}~\ROT_F\bvec{\tau}
    + \int_F\bvec{v}\cdot\bvec{\mu}
    \\
    &\qquad
    = h_F\sum_{E\in\EF}\int_E\lproj{k}{E}(\bvec{v}\cdot\tangent_E)~(\bvec{\tau}\cdot\tangent_E)
    \\
    &\qquad\quad
    + h_F^2\bigg(
    \int_F\Rproj{k-2}{F}\bvec{v}\cdot\VROT_F\ROT_F\bvec{\tau}
    - \sum_{E\in\EF}\omega_{FE}\int_E\lproj{k}{E}(\bvec{v}\cdot\tangent_E)~\ROT_F\bvec{\tau}
    \bigg)
    + \int_F\Rcproj{\ell_F+1}{F}\bvec{v}\cdot\bvec{\mu}
    \\
    &\qquad
    = \mathcal{L}_{\CURL,F}(\sIcurl{F}\bvec{v};(\bvec{\tau},\bvec{\mu})),
  \end{aligned}
  \]
  where we have used an integration by parts along with the definitions of the $L^2$-orthogonal projectors to pass to the second line, and \eqref{eq:LF.curl} together with the definition of $\sIcurl{F}$ (obtained, according to \eqref{eq:def.tIg}, composing \eqref{eq:R.curl.F} and the $L^2$-projectors on the components of $\Xcurl{F}$) to conclude.
  This proves that $(\bvec{v},\bvec{0})$ solves \eqref{eq:serendipity.problem:F} with $\mathcal{L}_F = \mathcal{L}_{\CURL,F}$ given by \eqref{eq:LF.curl} with $\suvec{v}_F = \sIcurl{F}\bvec{v}$. This establishes \eqref{eq:SC.sfP:polynomial.consistency}, from which \eqref{eq:EC.sfP:polynomial.consistency} immediately follows by definition \eqref{eq:Ecurl.F} of $\Ecurl{F}$.
\medskip

We now briefly discuss the case $\sfP=T\in\Th$.
Let $q\in\Poly{k+1}(T)$. Using $\Rgrad{\sfQ}\Igrad{\sfQ}=\sIgrad{\sfQ}$ for $ \sfQ\in \Th\cup\Fh$ (by definition \eqref{eq:def.tIg}), \eqref{eq:EG.sfP:polynomial.consistency} for $\sfP=F$, and the polynomial consistencies of $\cGT$ and $\trF$ (see \cite[Lemma 3]{Di-Pietro.Droniou:21*2}), the definition \eqref{eq:def.hatRqT} shows that the element component of $\sIgrad{T}q$ is $\RPoly{T}\Igrad{T}q=\lproj{\ell_T}{T}q$. We can then proceed as above, using the polynomial consistency of $\cGF$ and $\trF$ together with \eqref{eq:EG.sfP:polynomial.consistency} for $\sfP=F$ to see that \eqref{eq:SG.sfP:polynomial.consistency} and \eqref{eq:EG.sfP:polynomial.consistency} hold for $\sfP=T$.
\smallskip

The proof of \eqref{eq:SC.sfP:polynomial.consistency} and \eqref{eq:EC.sfP:polynomial.consistency} for $\sfP=T$ is similar, noticing that the polynomial consistencies of $\trFt$ and $\cCT$ (see \cite[Eqs.~(3.25) and (3.29)]{Di-Pietro.Droniou:21*2}) and \eqref{eq:EC.sfP:polynomial.consistency} for $\sfP=F$ yield $\RRoly{T}\Icurl{T}\bvec{v}=\Rproj{k-1}{T}\bvec{v}$.
\end{proof}

\begin{lemma}[Projections of extension, serendipity and gradient operators]
  For all $\sfP\in\Th\cup\Fh$, it holds:
  \begin{alignat}{4}
    \label{eq:proj.ErP}
    \lproj{\ell_\sfP}{\sfP}\EPoly{\sfP}\su{q}_\sfP &= \s{q}_\sfP
    &\quad&\forall \su{q}_\sfP\in\sXgrad{\sfP},\\
    \label{eq:proj.GPErP}
    \Rcproj{k}{\sfP}\cGP\Egrad{\sfP}\su{q}_\sfP &= \Rcproj{k}{\sfP}\SG{\sfP}\su{q}_\sfP
    &\quad&\forall \su{q}_\sfP\in\sXgrad{\sfP}.
    \\ \label{eq:Rcproj.SC=compl}
    \Rcproj{\ell_\sfP+1}{\sfP}\SC{\sfP}\suvec{v}_\sfP &= \sbvec{v}_{\cvec{R},\sfP}^\compl
    &\quad&\forall\suvec{v}_\sfP\in\sXcurl{\sfP}.
  \end{alignat}
\end{lemma}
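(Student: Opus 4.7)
The plan is to derive all three identities from the variational/duality characterisations of the various operators, choosing tailored test functions in the serendipity problem and in the definitions of $\EPoly{\sfP}$ and $\cGP$. Throughout, I would treat the face case $\sfP=F$ in detail and indicate that the element case $\sfP=T$ is completely analogous (the only substantive difference being that scalar potentials on edges and the face operator $\cGF, \trF$ are replaced by their element counterparts, together with the intermediate step $\Rgrad{F}\Igrad{F}q=\sIgrad{F}q$ already used in the proof of Proposition \ref{prop:consistency.S.E.sfP}).

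For \eqref{eq:proj.ErP}, the key observation is that, since $\DIV_F:\cRoly{\ell_F+1}(F)\to\Poly{\ell_F}(F)$ is an isomorphism (and likewise for $\DIV$ from $\cRoly{\ell_T+1}(T)$ to $\Poly{\ell_T}(T)$), it suffices to show that $\int_F\EPoly{F}\su{q}_F\,\DIV_F\bvec{w}_F=\int_F\s{q}_F\,\DIV_F\bvec{w}_F$ for every $\bvec{w}_F\in\cRoly{\ell_F+1}(F)$. Since $\ell_F+1\le k$, any such $\bvec{w}_F$ is admissible as a test function in the definition \eqref{eq:def.hatErF} of $\EPoly{F}$. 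I would then test the serendipity problem \eqref{eq:serendipity.problem:F} (whose solution has first component $\SG{F}\su{q}_F$) against $(\bvec{\tau},\bvec{\mu})=(\bvec{0},\bvec{w}_F)$: by the definition \eqref{eq:AF} of $\mathcal{A}_F$ and the definition \eqref{eq:LF.grad} of $\mathcal{L}_{\GRAD,F}$, this yields $\int_F\SG{F}\su{q}_F\cdot\bvec{w}_F=-\int_F\s{q}_F\,\DIV_F\bvec{w}_F+\sum_{E\in\EF}\omega_{FE}\int_E\s{q}_E\,(\bvec{w}_F\cdot\normal_{FE})$. Plugging this into \eqref{eq:def.hatErF} makes the boundary terms cancel and leaves precisely $\int_F\s{q}_F\,\DIV_F\bvec{w}_F$, concluding the argument.

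For \eqref{eq:proj.GPErP}, I would test $\cGF\Egrad{F}\su{q}_F$ against an arbitrary $\bvec{v}_F\in\cRoly{k}(F)$ in the definition \eqref{eq:cGF} of $\cGF$, giving $\int_F\cGF\Egrad{F}\su{q}_F\cdot\bvec{v}_F=-\int_F\EPoly{F}\su{q}_F\,\DIV_F\bvec{v}_F+\sum_{E\in\EF}\omega_{FE}\int_E\s{q}_E\,(\bvec{v}_F\cdot\normal_{FE})$. Now $\bvec{v}_F\in\cRoly{k}(F)$ is exactly the range of test functions allowed in \eqref{eq:def.hatErF}, so substituting the definition of $\EPoly{F}$ the two boundary contributions cancel and we are left with $\int_F\SG{F}\su{q}_F\cdot\bvec{v}_F$. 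Since this holds for every $\bvec{v}_F\in\cRoly{k}(F)$ and the equality involves elements of $\vPoly{k}(F)$ tested against all of $\cRoly{k}(F)$, it is exactly the identification of $L^2$-projections onto $\cRoly{k}(F)$, giving the result.

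For \eqref{eq:Rcproj.SC=compl}, the argument is the most direct: since $\sbvec{v}_{\cvec{R},F}^\compl\in\cRoly{\ell_F+1}(F)$ by the very definition of $\sXcurl{F}$, it is enough to check that $\SC{F}\suvec{v}_F$ has the same $L^2$-inner product as $\sbvec{v}_{\cvec{R},F}^\compl$ against every $\bvec{\mu}\in\cRoly{\ell_F+1}(F)$. Testing the serendipity problem \eqref{eq:serendipity.problem:F} that defines $\SC{F}\suvec{v}_F$ with $(\bvec{\tau},\bvec{\mu})=(\bvec{0},\bvec{\mu})$ kills every term in $\mathcal{A}_F$ except $\int_F\SC{F}\suvec{v}_F\cdot\bvec{\mu}$, and inspection of the right-hand side \eqref{eq:LF.curl} shows that $\mathcal{L}_{\CURL,F}(\suvec{v}_F;\bvec{0},\bvec{\mu})=\int_F\sbvec{v}_{\cvec{R},F}^\compl\cdot\bvec{\mu}$, which is exactly what is needed.

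No step looks genuinely hard; the only mild subtleties are keeping track of which polynomial degrees allow which test functions (ensuring $\ell_\sfP+1\le k$ so that test functions in $\cRoly{\ell_\sfP+1}(\sfP)$ are legitimate in the definition of $\EPoly{\sfP}$), and remembering that in the element case \eqref{eq:def.hatRqT} involves $\trF\Egrad{F}\Rgrad{F}\underline{q}_F$ rather than bare edge/face values, which is exactly the form that matches the definition of $\SG{T}$ through $\mathcal{L}_{\GRAD,T}$ in \eqref{eq:LT.grad}.
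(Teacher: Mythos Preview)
Your proposal is correct and follows essentially the same approach as the paper's own proof: for \eqref{eq:proj.ErP} you test the serendipity problem with $(\bvec{0},\bvec{w}_F)$ for $\bvec{w}_F\in\cRoly{\ell_F+1}(F)$ and substitute into \eqref{eq:def.hatErF}, for \eqref{eq:proj.GPErP} you test \eqref{eq:cGF} against $\cRoly{k}(F)$ and invoke \eqref{eq:def.hatErF}, and for \eqref{eq:Rcproj.SC=compl} you simply test the curl serendipity problem with $(\bvec{0},\bvec{\mu})$. One small slip in your closing remarks: in the element case the relevant definition of $\EPoly{T}$ is the (unnumbered) equation following \eqref{eq:Ecurl.T}, not \eqref{eq:def.hatRqT} (which defines the \emph{reduction} component $\RPoly{T}$), and its boundary term already reads $\trF\Egrad{F}\su{q}_F$, matching \eqref{eq:LT.grad} directly with no $\Rgrad{F}$ in sight; this does not affect your argument.
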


\begin{proof}
  We focus, for the sake of brevity, on the case $\sfP = F\in\Fh$, the case $\sfP = T\in\Th$ being similar.
  Taking a generic $\bvec{w}_F\in\cRoly{\ell_F+1}(F)$ and plugging $(\bvec{\tau},\bvec{\mu}) = (\bvec{0}, \bvec{w}_F)$ into the variational problem defining $\SG{F}\s{q}_F$ (i.e., \eqref{eq:serendipity.problem:F} with $\mathcal{L}_F = \mathcal{L}_{\GRAD,F}$ given by \eqref{eq:LF.grad}), we infer that
  \begin{equation}\label{eq:SG.cRolyl}
  \int_F\SG{F}\su{q}_F\cdot\bvec{w}_F
  = -\int_F \s{q}_F~\DIV_F\bvec{w}_F
  + \sum_{E\in\EF}\omega_{FE}\int_E \s{q}_E~(\bvec{w}_F\cdot\normal_{FE}).
  \end{equation}
  Writing the definition \eqref{eq:def.hatErF} of $\EPoly{F}$ with this choice of $\bvec{w}_F$ (which is possible since $\cRoly{\ell_F+1}(F)\subset\cRoly{k}(F)$ by \eqref{eq:def.ellF}) and subtracting \eqref{eq:SG.cRolyl} from the resulting expression yields
  \[
  \int_F \EPoly{F}\su{q}_F\DIV_F\bvec{w}_F = \int_F\s{q}_F\DIV_F\bvec{w}_F,
  \]
  which proves \eqref{eq:proj.ErP} since $\DIV_F:\cRoly{\ell_F+1}(F)\to\Poly{\ell_F}(F)$ is an isomorphism.
  To establish \eqref{eq:proj.GPErP}, we take $\bvec{w}_F\in\cRoly{k}(F)$ and write the definition \eqref{eq:cGF} of $\cGF\Egrad{F}\su{q}_F$ to get
  \[
  \int_F \cGF\Egrad{F}\su{q}_F{}\cdot\bvec{w}_F
  = -\int_F\EPoly{F}\su{q}_F\DIV_F\bvec{w}_F
  + \sum_{E\in\EF}\omega_{FE}\int_E \s{q}_E~(\bvec{w}_F\cdot\normal_{FE})
  = \int_F\SG{F}\su{q}_F\cdot\bvec{w}_F,
  \]
  where the second equality follows from the definition \eqref{eq:def.hatErF} of $\EPoly{F}$.
    Finally, to prove \eqref{eq:Rcproj.SC=compl}, it suffices to take test functions of the form $(\bvec{0},\bvec{\mu})$, with $\bvec{\mu}$ spanning $\cRoly{\ell_F+1}(F)$, in the problem defining $\SC{F}\suvec{v}_F$ (i.e., \eqref{eq:serendipity.problem:F} with right-hand side $\mathcal{L}_F = \mathcal{L}_{\CURL,F}$ given by \eqref{eq:LF.curl}).%
\end{proof}

\begin{lemma}[Equivalence of norms on {$\sXgrad{T}$} and {$\sXcurl{T}$}]\label{lem:equiv.norms.sXbullet}
  For $\bullet\in\{\GRAD,\CURL\}$, it holds $\norm[\bullet,\ser,T]{{\cdot}}\simeq \tnorm[\bullet,T]{{\cdot}}$ on $\sXbullet{T}$ where, in accordance with \eqref{eq:def.ps.tXg}, $\norm[\bullet,\ser,T]{{\cdot}}\coloneq\norm[\bullet,T]{\Ebullet{T}{\cdot}}$ with $\norm[\bullet,T]{{\cdot}}$ defined in Section \ref{sec:ddr:L2-products.norms}.
\end{lemma}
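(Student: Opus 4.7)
The plan is to chain two equivalences: first transfer from the serendipity norm to the component norm of the extension in $\Xbullet{T}$, then show that the component norm of the extension is equivalent to the original component norm of the serendipity element.

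For the first step, I would invoke the equivalence $\norm[\bullet,T]{{\cdot}}\simeq\tnorm[\bullet,T]{{\cdot}}$ on $\Xbullet{T}$, which is established in \cite{Di-Pietro.Droniou:21*2} for the (non-serendipity) DDR spaces. Combined with the definition $\norm[\bullet,\ser,T]{\su{x}_T}=\norm[\bullet,T]{\Ebullet{T}\su{x}_T}$, this gives
\[
  \norm[\bullet,\ser,T]{\su{x}_T}\simeq\tnorm[\bullet,T]{\Ebullet{T}\su{x}_T}
  \qquad\forall\su{x}_T\in\sXbullet{T}.
\]
It then suffices to prove $\tnorm[\bullet,T]{\Ebullet{T}\su{x}_T}\simeq\tnorm[\bullet,T]{\su{x}_T}$.

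For the inequality $\tnorm[\bullet,T]{\Ebullet{T}\su{x}_T}\lesssim\tnorm[\bullet,T]{\su{x}_T}$, I would invoke the continuity estimate \eqref{eq:cont.Egrad} for $\bullet=\GRAD$ and its analogue (stated unlabeled right after \eqref{eq:cont.Egrad}) for $\bullet=\CURL$, both of which are direct consequences of the bounds \eqref{eq:continuity.SGT}, \eqref{eq:continuity.SCT} on the serendipity operators and of the face estimates \eqref{eq:est.EgradF}, \eqref{eq:est.EcurlF}.

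The reverse inequality $\tnorm[\bullet,T]{\su{x}_T}\lesssim\tnorm[\bullet,T]{\Ebullet{T}\su{x}_T}$ is the key point and follows from the projection identities \eqref{eq:proj.ErP} and \eqref{eq:Rcproj.SC=compl}. Indeed, for $\bullet=\GRAD$, the components of $\Egrad{T}\su{q}_T$ are $(\EPoly{T}\su{q}_T,(\EPoly{F}\su{q}_F)_{F\in\FT},\s{q}_{\ET})$, and \eqref{eq:proj.ErP} (applied on $T$ and on each $F\in\FT$) together with $L^2$-boundedness of the polynomial projectors yields $\norm[\sfP]{\s{q}_\sfP}=\norm[\sfP]{\lproj{\ell_\sfP}{\sfP}\EPoly{\sfP}\su{q}_\sfP}\le\norm[\sfP]{\EPoly{\sfP}\su{q}_\sfP}$ for $\sfP\in\{T\}\cup\FT$, while the edge components are unchanged by the extension; summing these contributions with the weights prescribed by \eqref{eq:tnorm} gives the desired bound. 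For $\bullet=\CURL$, the components of $\Ecurl{T}\suvec{v}_T$ that differ from those of $\suvec{v}_T$ are precisely the $\Rcoly{}$-complement pieces $\Rcproj{k}{\sfP}\SC{\sfP}\suvec{v}_\sfP$ for $\sfP\in\{T\}\cup\FT$; using the hierarchy \eqref{eq:Xcproj.ell+1.k} and identity \eqref{eq:Rcproj.SC=compl} we get $\sbvec{v}_{\cvec{R},\sfP}^\compl=\Rcproj{\ell_\sfP+1}{\sfP}\Rcproj{k}{\sfP}\SC{\sfP}\suvec{v}_\sfP$, whence $L^2$-boundedness of $\Rcproj{\ell_\sfP+1}{\sfP}$ gives $\norm[\sfP]{\sbvec{v}_{\cvec{R},\sfP}^\compl}\le\norm[\sfP]{\Rcproj{k}{\sfP}\SC{\sfP}\suvec{v}_\sfP}$, and summing with the weights of \eqref{eq:tnorm} concludes.

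The main (minor) obstacle is not in any single step, but in the bookkeeping of verifying that every ``unchanged'' sub-component of $\su{x}_T$ really appears identically in $\Ebullet{T}\su{x}_T$, and that the ``replaced'' sub-components are precisely those controlled by the projection identities \eqref{eq:proj.ErP} and \eqref{eq:Rcproj.SC=compl}; once this comparison is made term by term in the definition \eqref{eq:tnorm} of $\tnorm[\bullet,T]{{\cdot}}$, the equivalence $\tnorm[\bullet,T]{\Ebullet{T}\su{x}_T}\simeq\tnorm[\bullet,T]{\su{x}_T}$ and hence the conclusion follow.
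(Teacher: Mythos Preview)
Your proposal is correct and follows essentially the same approach as the paper: the DDR norm equivalence $\norm[\bullet,T]{{\cdot}}\simeq\tnorm[\bullet,T]{{\cdot}}$ reduces the question to $\tnorm[\bullet,T]{\Ebullet{T}\su{x}_T}\simeq\tnorm[\bullet,T]{\su{x}_T}$, one direction of which is the continuity \eqref{eq:cont.Egrad} of the extension, the other following from the projection identities \eqref{eq:proj.ErP} (for $\GRAD$) and \eqref{eq:Rcproj.SC=compl} (for $\CURL$) together with $L^2$-boundedness of the projectors. The only cosmetic difference is that the paper interleaves the DDR norm equivalence once per direction rather than factoring it out first, and leaves the $\CURL$ case implicit as ``similar'' whereas you spell it out.
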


\begin{proof}
  We only consider the case $\bullet=\GRAD$, the proof for $\bullet=\CURL$ being similar.
  For all $\su{q}_T\in\sXgrad{T}$, we have
  \[
  \norm[\GRAD,\ser,T]{\su{q}_T}
  = \norm[\GRAD,T]{\Egrad{T}\su{q}_T}
  \lesssim \tnorm[\GRAD,T]{\Egrad{T}\su{q}_T}
  \lesssim \tnorm[\GRAD,T]{\su{q}_T},
  \]
  where the first inequality comes from the equivalence of norms on $\Xgrad{T}$, see \cite[Lemma 5]{Di-Pietro.Droniou:21*2}, and the second inequality is \eqref{eq:cont.Egrad}.
  To prove the converse inequality, we use \eqref{eq:proj.ErP} to write
  \begin{align*}
    \tnorm[\GRAD,T]{\su{q}_T}^2
    &= \norm[T]{\lproj{\ell_T}{T}\EPoly{T}\su{q}_T}^2 + \sum_{F\in\FT}h_F\norm[F]{\lproj{\ell_F}{F}\EPoly{F}\su{q}_F}^2 + \sum_{F\in\FT}h_F\sum_{E\in\EF}h_E\norm[E]{q_E}^2
    \\
    &\le\tnorm[\GRAD,T]{\Egrad{T}\su{q}_T}^2,
  \end{align*}
  where the last bound follows from the $L^2$-boundedness of the $L^2$-projectors and the definition \eqref{eq:Egrad.T} of $\Egrad{T}\su{q}_T$. We then invoke the equivalence of norms in $\Xgrad{T}$ (see \cite[Lemma 5]{Di-Pietro.Droniou:21*2}) to get $\tnorm[\GRAD,T]{\Egrad{T}\su{q}_T}^2\lesssim \norm[\GRAD,T]{\Egrad{T}\su{q}_T}^2=\norm[\GRAD,\ser,T]{\su{q}_T}^2$.
\end{proof}

\subsection{Commutation property for the serendipity operators}

The property in the following lemma justifies the choice of serendipity operators made in Sections \ref{sec:serendipity.ddr:faces} and \ref{sec:serendipity.ddr:elements} and will play a crucial role in the proof of Lemma \ref{lem:cochain.E.curl} below (cochain property for the extension map).

\begin{lemma}[Commutation property for the serendipity operators]\label{lem:commutation.serendipity}
  For all $\sfP\in\Th\cup\Fh$, it holds, recalling that $\suGP\coloneq\Rcurl{\sfP}\uGP\Egrad{\sfP}$ by \eqref{eq:def.tdg},
  \begin{equation}
    \label{eq:SRecRcurlG=GRAD}
    \SC{\sfP}\suGP\su{q}_\sfP
    = \SG{\sfP}\su{q}_\sfP
    \qquad\forall\su{q}_\sfP\in\sXgrad{\sfP},
  \end{equation}
  which expresses the fact that the following diagram commutes:
  \[
  \begin{tikzcd}
    \sXgrad{\sfP} \arrow[r, "\SG{\sfP}"] \arrow[rd, swap, "\suGP"] & \vPoly{k}(\sfP) \\ 
    {} & \sXcurl{\sfP} \arrow[u, swap, "\SC{\sfP}"] 
  \end{tikzcd}
  \]
\end{lemma}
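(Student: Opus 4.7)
The plan is to exploit uniqueness of solutions to the serendipity problems \eqref{eq:serendipity.problem:F} and \eqref{eq:serendipity.problem:T} by reducing \eqref{eq:SRecRcurlG=GRAD} to the identity of right-hand sides: it suffices to show that $\mathcal{L}_{\CURL,\sfP}(\suGP\su{q}_\sfP;\bvec{\tau},\bvec{\mu}) = \mathcal{L}_{\GRAD,\sfP}(\su{q}_\sfP;\bvec{\tau},\bvec{\mu})$ for every test pair $(\bvec{\tau},\bvec{\mu})$. Since both problems share the bilinear form $\mathcal{A}_\sfP$, this equality of right-hand sides implies equality of full solutions, and in particular of their first components $\SC{\sfP}\suGP\su{q}_\sfP$ and $\SG{\sfP}\su{q}_\sfP$.

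For the face case $\sfP = F$, I would first unpack $\suGF\su{q}_F = \Rcurl{F}\uGF\Egrad{F}\su{q}_F$ using the definitions together with \eqref{eq:Xcproj.ell+1.k}, to obtain the components $\sbvec{v}_{\cvec{R},F} = \Rproj{k-1}{F}\cGF\Egrad{F}\su{q}_F$, $\sbvec{v}_{\cvec{R},F}^\compl = \Rcproj{\ell_F+1}{F}\cGF\Egrad{F}\su{q}_F$, and $\s{v}_E = \s{q}_E'$. I would then verify the vanishing of $\mathcal{L}_{\CURL,F}-\mathcal{L}_{\GRAD,F}$ by treating three groups of terms separately: the edge-tangential terms cancel trivially since $\s{v}_E=\s{q}_E'$; the $\bvec{\mu}$-dependent terms cancel by plugging $\bvec{\mu}\in\cRoly{\ell_F+1}(F)\subset\cRoly{k}(F)$ into \eqref{eq:cGF} applied to $\Egrad{F}\su{q}_F$ and invoking \eqref{eq:proj.ErP} to replace $\EPoly{F}\su{q}_F$ by $\s{q}_F$ under the $L^2$-pairing against $\DIV_F\bvec{\mu}\in\Poly{\ell_F}(F)$; the $h_F^2$ rotational terms cancel upon setting $r_F=\ROT_F\bvec{\tau}\in\Poly{k-1}(F)$ in \eqref{eq:CF} applied to $\uGF\Egrad{F}\su{q}_F$ and exploiting the complex property $\CF\uGF\Egrad{F}\su{q}_F=0$.

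For the element case $\sfP = T$ I would follow the same pattern. The $\bvec{\mu}$-terms are handled analogously using \eqref{eq:cGT} and \eqref{eq:proj.ErP}. The face boundary integrals in $\mathcal{L}_{\CURL,T}$ involve $\trFt\Ecurl{F}\suGF\su{q}_F$; combining \eqref{eq:proj.GPErP} with the already-proved face commutation yields $\Ecurl{F}\suGF\su{q}_F=\uGF\Egrad{F}\su{q}_F$, and the identity $\trFt\uGF\underline{q}_F=\cGF\underline{q}_F$ (obtained from the definitions of $\trFt$ and $\cGF$ together with $\CF\uGF=0$ and a Stokes-type integration by parts along edges using the continuity of $q_{\EF}$) reduces these to $\cGF\Egrad{F}\su{q}_F$, matching the corresponding terms of $\mathcal{L}_{\GRAD,T}$. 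The main obstacle is the element-interior term $h_T^2\int_T\sbvec{v}_{\cvec{R},T}\cdot\CURL\CURL\bvec{\tau}$, since $\sbvec{v}_{\cvec{R},T}=\RRoly{T}\uGT\Egrad{T}\su{q}_T$ is defined only implicitly by \eqref{eq:def.hatu}. I would compare \eqref{eq:def.hatu} against \eqref{eq:cCT} tested on $\bvec{w}_T\in\cGoly{k}(T)$: the face defect $\trFt\uGF\Egrad{F}\su{q}_F-\trFt\Ecurl{F}\suGF\su{q}_F$ vanishes by the previous step, and since $\cCT\uGT\Egrad{T}\su{q}_T=0$ and $\CURL$ is an isomorphism from $\cGoly{k}(T)$ onto $\Roly{k-1}(T)$, this forces the replacement $\RRoly{T}\uGT\Egrad{T}\su{q}_T=\Rproj{k-1}{T}\cGT\Egrad{T}\su{q}_T$. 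Substituting and invoking \eqref{eq:cCT} directly at $\uGT\Egrad{T}\su{q}_T$ with test function $\CURL\bvec{\tau}\in\vPoly{k-1}(T)\subset\vPoly{k}(T)$---whose left-hand side vanishes since $\cCT\uGT\Egrad{T}\su{q}_T=0$---then delivers the desired cancellation of the remaining $\bvec{\tau}$-dependent terms.
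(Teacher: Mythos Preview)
Your proposal is correct and follows essentially the same strategy as the paper: show that $\mathcal{L}_{\CURL,\sfP}(\suGP\su{q}_\sfP;\cdot,\cdot)=\mathcal{L}_{\GRAD,\sfP}(\su{q}_\sfP;\cdot,\cdot)$ and conclude by uniqueness in the serendipity problem. The only cosmetic difference is that for the $\bvec{\mu}$-terms you go through \eqref{eq:cGF}/\eqref{eq:cGT} combined with \eqref{eq:proj.ErP}, whereas the paper passes via \eqref{eq:proj.GPErP} and \eqref{eq:SG.cRolyl}; your element case, which the paper leaves to the reader, is handled correctly (in particular the identification $\RRoly{T}\uGT\Egrad{T}\su{q}_T=\Rproj{k-1}{T}\cGT\Egrad{T}\su{q}_T$ and the use of \eqref{eq:cCT} with $\bvec{w}_T=\CURL\bvec{\tau}$).
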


\begin{proof}
  Let us consider $\sfP=F$, and set, for the sake of brevity, $\suvec{v}_F\coloneq\suGF\su{q}_F$.
  Then, $\s{v}_E = \s{q}_E'$ and
  \begin{equation}\label{eq:ser.red.ext:0}
    \sbvec{v}_{\cvec{R},F}^\compl
    = \Rcproj{\ell_F+1}{F}\cGF\Egrad{F}\su{q}_F
    = \Rcproj{\ell_F+1}{F}\SG{F}\su{q}_F
  \end{equation}
  by \eqref{eq:proj.GPErP} for $\sfP = F$ recalling \eqref{eq:Xcproj.ell+1.k} along with $\ell_F+1\le k$ (cf.~\eqref{eq:def.ellF}).
  Moreover, for all $z_F\in\Poly{k}(F)$,
  \begin{align}
    \int_F \sbvec{v}_{\cvec{R},F}\cdot\VROT_Fz_F
    - \sum_{E\in\EF}\omega_{FE}\int_E \s{v}_E z_F
    &=
    \int_F \Rproj{k-1}{F}\cGF\Egrad{F}\su{q}_F\cdot\VROT_Fz_F-\sum_{E\in\EF}\omega_{FE}\int_E \s{q}_E' z_F\nonumber\\ 
    &=
    \int_F \CF\uGF\Egrad{F}\su{q}_F~z_F=0,
  \label{eq:ser.red.ext:1}
  \end{align}
  where, to pass to the second line, we have invoked the definition \eqref{eq:CF} of $\CF$ together with the fact that the edge components of $\uGF\Egrad{F}\su{q}_F$ are $(\s{q}_E')_{E\in\EF}$, and we have concluded thanks to the local complex property stated in \cite[Proposition 2]{Di-Pietro.Droniou:21*2}.
  We next notice that, recalling \eqref{eq:ser.red.ext:0} and \eqref{eq:SG.cRolyl}, for all $\bvec{\mu}\in\cRoly{\ell_F+1}(F)$,
  \begin{equation}\label{eq:ser.red.ext:2}
    \int_F\sbvec{v}_{\cvec{R},F}^\compl\cdot\bvec{\mu}
    = \int_F\SG{F}\su{q}_F\cdot\bvec{\mu}
    = -\int_F \s{q}_F\DIV_F\bvec{\mu}
    + \sum_{E\in\EF}\omega_{FE}\int_E\s{q}_E~(\bvec{\mu}\cdot\normal_{FE}).
  \end{equation}
  Recalling that $v_E = \s{q}_E'$ for all $E\in\Eh$, plugging \eqref{eq:ser.red.ext:1} and \eqref{eq:ser.red.ext:2} into the expression \eqref{eq:LF.curl} of the linear form $\mathcal{L}_{\CURL,F}(\suvec{v}_F;\cdot,\cdot)$, and recalling the expression \eqref{eq:LF.grad} of the linear form $\mathcal{L}_{\GRAD,F}$, we conclude that $\mathcal{L}_{\CURL,F}(\suvec{v}_F;\bvec{\tau},\bvec{\mu}) = \mathcal{L}_{\GRAD,F}(\su{q}_F;\bvec{\tau},\bvec{\mu})$ for all $(\bvec{\tau},\bvec{\mu})\in\vPoly{k}(F)\times\cRoly{\ell_F+1}(F)$.
  Letting $\bvec{\lambda}_{\suvec{v}_F},\bvec{\lambda}_{\su{q}_F}\in\cRoly{\ell_F+1}(F)$ be the second components in the serendipity problems \eqref{eq:serendipity.problem:F} defining $\SC{F}\suvec{v}_F$ and $\SG{F}\su{q}_F$, respectively, this implies
  \[
  \mathcal{A}_F((\SC{F}\suvec{v}_F-\SG{F}\su{q}_F,\bvec{\lambda}_{\suvec{v}_F}-\bvec{\lambda}_{\su{q}_F}),(\bvec{\tau},\bvec{\mu}))
  = 0
  \qquad\forall(\bvec{\tau},\bvec{\mu})\in\vPoly{k}\times\cRoly{\ell_F+1}(F),
  \]
  showing that $\SC{F}\suvec{v}_F=\SG{F}\su{q}_F$.
  The proof in the case $\sfP=T\in\Th$ is similar. The details are left to the reader.
\end{proof}

\subsection{Homological and analytical properties for the gradient}

\begin{lemma}[Properties of the serendipity gradient space]\label{lem:properties.sXgrad}
  The serendipity construction for the gradient space satisfies both the homological properties of Assumption \ref{ass:cohomology} and the analytical properties of Assumption \ref{ass:analytical}, with continuity constants in \ref{P:continuity} and \ref{P:continuity.interpolator} that do not depend on $h$.
\end{lemma}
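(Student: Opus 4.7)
The plan is to verify each of the six properties in Assumptions~\ref{ass:cohomology} and \ref{ass:analytical} for the pair $(\Xgrad{T},\sXgrad{T})$ with operators $\Egrad{T}$, $\Rgrad{T}$, and $\suGT=\Rcurl{T}\uGT\Egrad{T}$, taking the shorter items first. For \ref{P:extension} I would actually prove the stronger statement $\Rgrad{T}\Egrad{T}=\Id_{\sXgrad{T}}$: the face components follow directly from \eqref{eq:proj.ErP} applied on each $F\in\FT$, and for the element component I would insert the definition \eqref{eq:cGT} of $\cGT\Egrad{T}\su{q}_T$ into the defining relation \eqref{eq:def.hatRqT} of $\RPoly{T}(\Egrad{T}\su{q}_T)$, use the face-level identity $\Rgrad{F}\Egrad{F}\su{q}_F=\su{q}_F$ just established, and conclude via \eqref{eq:proj.ErP} for $\sfP=T$. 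Property \ref{P:complex.ER} follows from the local exactness $\ker\uGT=\Image\Igrad{T}$ of \cite{Di-Pietro.Droniou:21*2} combined with $\Rgrad{T}\Igrad{T}=\sIgrad{T}$ and the polynomial consistency \eqref{eq:EG.sfP:polynomial.consistency}: for $\underline{q}_T=\Igrad{T}c$ these give $\Egrad{T}\Rgrad{T}\underline{q}_T=\underline{q}_T$, placing the difference trivially in $\Image\Igrad{T}$. Property \ref{P:consistency} is exactly \eqref{eq:EG.sfP:polynomial.consistency}, and \ref{P:continuity.interpolator} is a standard $L^2$-boundedness/trace argument on the DDR interpolator \eqref{eq:Igradh}.

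The main work is the cochain property \ref{P:E.R.cochain}, i.e.\ the two identities $\Rcurl{T}\uGT=\Rcurl{T}\uGT\Egrad{T}\Rgrad{T}$ (reduction) and $\Ecurl{T}\Rcurl{T}\uGT\Egrad{T}=\uGT\Egrad{T}$ (extension), which I would check component by component in $\Xcurl{T}$. Edge components are trivially preserved by $\Egrad{T}\Rgrad{T}$. Face $\Rproj{k-1}{F}$ components: testing \eqref{eq:cGF} against $\bvec{v}_F\in\Roly{k-1}(F)$ kills $\DIV_F\bvec{v}_F$, leaving only edge terms which coincide. Face $\Rcproj{\ell_F+1}{F}$ components: use $\lproj{\ell_F}{F}\EPoly{F}\Rgrad{F}\underline{q}_F=\lproj{\ell_F}{F}q_F$ (a consequence of \eqref{eq:proj.ErP}) inside \eqref{eq:cGF}. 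Element $\Rcproj{\ell_T+1}{T}$ component: compare \eqref{eq:cGT} applied to $\underline{q}_T$ and to $\Egrad{T}\Rgrad{T}\underline{q}_T$; thanks to \eqref{eq:def.hatRqT}, the discrepancy on $\cRoly{\ell_T+1}(T)$ reduces to $\int_T(\EPoly{T}\Rgrad{T}\underline{q}_T-\RPoly{T}\underline{q}_T)\DIV\bvec{w}_T$, which vanishes by \eqref{eq:proj.ErP} for $\sfP=T$. Element $\Rproj{k-1}{T}$ component: combine \eqref{eq:def.hatu} with the vanishing $\cCT\uGT=0$ from the local DDR complex property and the already-verified face-level cochain identities. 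The extension identity then reduces to showing that each component of $\uGT\Egrad{T}\su{q}_T$ is fixed by $\Ecurl{T}\Rcurl{T}$: the $\Rproj{k-1}{\sfP}$-components self-project under the $L^2$-projector, while for the $\Rcproj{k}{\sfP}$-components the chain $\Rcproj{k}{\sfP}\SC{\sfP}\Rcurl{\sfP}\uGP\Egrad{\sfP}\su{q}_\sfP=\Rcproj{k}{\sfP}\SG{\sfP}\su{q}_\sfP=\Rcproj{k}{\sfP}\cGP\Egrad{\sfP}\su{q}_\sfP$ follows from Lemma~\ref{lem:commutation.serendipity} combined with \eqref{eq:proj.GPErP}.

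For \ref{P:continuity}, the isometry definition \eqref{eq:def.ps.tXg} gives the serendipity norm as $\norm[\GRAD,T]{\Egrad{T}{\cdot}}$, so I need $\norm[\GRAD,T]{\Egrad{T}\Rgrad{T}\underline{q}_T}\lesssim\norm[\GRAD,T]{\underline{q}_T}$. The norm equivalence of \cite[Lemma~5]{Di-Pietro.Droniou:21*2} together with \eqref{eq:cont.Egrad} and Lemma~\ref{lem:equiv.norms.sXbullet} reduces this to $\tnorm[\GRAD,T]{\Rgrad{T}\underline{q}_T}\lesssim \tnorm[\GRAD,T]{\underline{q}_T}$. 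Face and edge components of $\Rgrad{T}\underline{q}_T$ are controlled by $L^2$-stability of projections, while $\RPoly{T}\underline{q}_T$ is bounded via \eqref{eq:def.hatRqT} using the isomorphism $\DIV:\cRoly{\ell_T+1}(T)\to\Poly{\ell_T}(T)$ with the correct $h_T$-scaling, the bound \eqref{eq:GT.boundedness} on $\cGT$, and the continuity of $\trF\Egrad{F}\Rgrad{F}$ obtained by combining \cite[Proposition~6]{Di-Pietro.Droniou:21*2} with \eqref{eq:est.EgradF}. All constants that appear depend only on $\varrho$, $\theta$, and $k$, hence are $h$-independent.

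The chief obstacle is the cochain property \ref{P:E.R.cochain}, where element- and face-level operators interact in a nontrivial way. The conceptual reason it works is that the element-level reductions $\RPoly{T}$ and $\RRoly{T}$ were \emph{designed} via \eqref{eq:def.hatRqT} and \eqref{eq:def.hatu} so that the correction terms on their right-hand sides (the face-extended traces $\trF\Egrad{F}\Rgrad{F}\underline{q}_F$ and $\trFt\Ecurl{F}\Rcurl{F}\uvec{v}_F$) absorb exactly the discrepancy introduced by face-level DOF suppression; this provides the cancellations needed after projection onto $\cRoly{\ell_T+1}(T)$ and $\Roly{k-1}(T)$ to turn what would otherwise be a nontrivial error into an identity.
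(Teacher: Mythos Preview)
Your approach matches the paper's in spirit and in most details: \ref{P:extension} (strengthened to $\Rgrad{T}\Egrad{T}=\Id$), \ref{P:complex.ER} via local exactness $\ker\uGT=\Igrad{T}(\Real)$, \ref{P:consistency} via \eqref{eq:EG.sfP:polynomial.consistency}, and \ref{P:continuity} via the norm equivalences and \eqref{eq:def.hatRqT}, are all handled exactly as in the paper. The organizational difference is that you fold the cochain property for $\uGh$ into this lemma, whereas the paper interprets \ref{P:E.R.cochain} here only for the trivial slice $\Real\to\Xgrad{h}$ (where $\Eg{i}=\Rg{i}=\Id_\Real$, so the property is immediate from the definition of $\sIgrad{h}$) and postpones the $\Xgrad{h}\to\Xcurl{h}$ cochain identities to separate Lemmas~\ref{lem:RGER=RG} and~\ref{lem:cochain.E.curl}. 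Your choice is equally valid, and your reduction-side argument is essentially the paper's proof of Lemma~\ref{lem:RGER=RG}.

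There is, however, one genuine imprecision in your extension identity sketch. You write that ``the $\Rproj{k-1}{\sfP}$-components self-project under the $L^2$-projector'', but for $\sfP=T$ this is false: the reduction $\Rcurl{T}$ does \emph{not} leave the $\Roly{k-1}(T)$ component alone---it replaces it by $\RRoly{T}\uvec{v}_T$ defined via \eqref{eq:def.hatu}. Hence showing that the element $\Roly{k-1}(T)$ component of $\uGT\Egrad{T}\su{q}_T$ is fixed by $\Ecurl{T}\Rcurl{T}$ requires proving $\RRoly{T}\uGT\Egrad{T}\su{q}_T=\Rproj{k-1}{T}\cGT\Egrad{T}\su{q}_T$. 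This is not a projection identity; one must combine \eqref{eq:def.hatu} with $\cCT\uGT=\bvec{0}$, the \emph{already established face-level extension identity} $\Ecurl{F}\Rcurl{F}\uGF\Egrad{F}\su{q}_F=\uGF\Egrad{F}\su{q}_F$, the relation $\trFt\uGF=\cGF$, and the element-to-face gradient link, exactly as the paper does in the last paragraph of Lemma~\ref{lem:cochain.E.curl}. You have all of these ingredients elsewhere in your outline, so the fix is just to route them correctly to this step rather than invoking a nonexistent self-projection.
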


\begin{proof}
  \emph{(i) Proof of \ref{P:extension}.}
  Let $T\in\Th$ and $\su{q}_T\in\sXgrad{T}$ and let, for the sake of brevity, $\underline{q}_T\coloneq\Egrad{T}\su{q}_T$.
  Owing to \eqref{eq:proj.ErP}, for all $F\in\FT$ we have $\Rgrad{F}\underline{q}_F=\su{q}_F$.
  Plugging this relation into the definition \eqref{eq:def.hatRqT} of $\RPoly{T}\underline{q}_T$ we find, for all $\bvec{w}_T\in\cRoly{\ell_T+1}(T)$,
  \[
  \int_T\RPoly{T}\underline{q}_T~\DIV\bvec{w}_T
  = -\int_T\cGT\underline{q}_T\cdot\bvec{w}_T
  + \sum_{F\in\FT}\omega_{TF}\int_F \trF\underline{q}_F~(\bvec{w}_T\cdot\normal_F)
  = \int_T\EPoly{T}\su{q}_T\DIV\bvec{w}_T,
  \]
  where the conclusion comes from the definitions \eqref{eq:cGT} of $\cGT$ and \eqref{eq:Egrad.T} of $\Egrad{T}$. Recalling that $\DIV: \cRoly{\ell_T+1}(T)\to\Poly{\ell_T}(T)$ is an isomorphism and applying \eqref{eq:proj.ErP} for $\sfP=T$ yields $\RPoly{T}\underline{q}_T=\s{q}_T$. This proves that $\Rgrad{h}\Egrad{h}=\Id$ on the whole of $\sXgrad{T}$.
  \medskip\\
  \emph{(ii) Proof of \ref{P:consistency}.} The polynomial consistency is only meaningful for the local complex on a mesh element $T\in\Th$, for which it is simply \eqref{eq:EG.sfP:polynomial.consistency} with $\sfP=T$.
  \medskip\\
  \emph{(iii) Proof of \ref{P:complex.ER}.}
  Take $\underline{q}_h\in \ker\uGh$. We claim that $\Egrad{T}\Rgrad{T}\underline{q}_T=\underline{q}_T$ for all $T\in\Th$, which establishes a stronger version of \ref{P:complex.ER} (namely, $\Egrad{h}\Rgrad{h}-\Id=0$ on $\ker\uGh$). The condition $\uGh\underline{q}_h=\underline{0}$ implies $\uGT\underline{q}_T=\underline{0}$ for all $T\in\Th$ and thus, by exactness of the DDR sequence on $T$, $\underline{q}_T=\Igrad{T}M$ for some $M\in\Real$. The property \ref{P:consistency} then yields $\Egrad{T}\Rgrad{T}\underline{q}_T=\underline{q}_T$ as claimed.
  \medskip\\  
  \emph{(iv) Proof of \ref{P:E.R.cochain}.}
  For this slice of the complex between $\Real$ and $\Xgrad{h}$, we have $\Eg{i}=\Rg{i}=\Id_\Real$ and the operators in the sequences are $\Igrad{h}$ and $\sIgrad{h}$.
  The fact that the reductions are cochain maps then follows from the definition \eqref{eq:def.tIg} of $\sIgrad{h}$, while the cochain map property of the extension is simply \ref{P:consistency}, proved above, applied to constant polynomials.
  \medskip\\
  \emph{(iv) Proof of \ref{P:continuity}.}
  For all $T\in\Th$, the continuity of $\Rgrad{T}:\Xgrad{T}\to\sXgrad{T}$ is a direct consequence of the norm equivalences in $\Xgrad{T}$ and $\sXgrad{T}$ (see \cite[Lemma 5]{Di-Pietro.Droniou:21*2} and Lemma \ref{lem:equiv.norms.sXbullet}), the boundedness of the $L^2$-projector $\lproj{\ell_F}{F}$, and, to estimate $\RPoly{T}\underline{q}_T$, \cite[Lemma 9]{Di-Pietro.Droniou:21*2} and the continuities of $\cGT$ and $\trF$ (see \eqref{eq:GT.boundedness} and \cite[Eq.~(4.22)]{Di-Pietro.Droniou:21*2}).
  \medskip\\
  \emph{(iv) Proof of \ref{P:continuity.interpolator}.} This property on the original DDR complex is proved in \cite[Lemma 6]{Di-Pietro.Droniou:21*2}.
\end{proof}

\subsection{Homological and analytical properties for the curl}

The goal of this section is to prove homological and analytical properties for the curl.
We remind the reader that, according to \eqref{eq:def.tdg},
\[
\text{%
  $\suGh\coloneq\Rcurl{h}\uGh\Egrad{h}$\quad and\quad
  $\suCh\coloneq\Rdiv{h}\uCh\Ecurl{h}$.
}
\]
We start by addressing the cochain property for the reduction and extension maps.

\begin{lemma}[Cochain property of the reduction]\label{lem:RGER=RG}
  For the slice of the complex between $\Xgrad{h}$ and $\Xcurl{h}$, property \ref{P:E.R.cochain} holds for the reduction, i.e.,
  \begin{equation}\label{eq:RGER=RG}
    \suGh\Rgrad{h}\underline{q}_h=\Rcurl{h}\uGh\underline{q}_h\qquad\forall\underline{q}_h\in\Xgrad{h}.
  \end{equation}
\end{lemma}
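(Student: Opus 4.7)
The plan is to prove \eqref{eq:RGER=RG} by reducing to a local statement on each element $T\in\Th$ and then checking the equality componentwise (edge, face, element components) between $\Rcurl{T}\uGT\underline{q}_T$ and $\Rcurl{T}\uGT\Egrad{T}\Rgrad{T}\underline{q}_T$. The edge components agree trivially because the definition \eqref{eq:R.grad.T} of $\Rgrad{T}$ preserves edge polynomials (and $\Egrad{T}$ does too), and the edge component of $\uGT$ depends only on edge values.

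For the face components, both sides after application of $\Rcurl{F}$ consist of $\Rproj{k-1}{F}\cGF$ acting on either $\underline{q}_F$ or $\Egrad{F}\Rgrad{F}\underline{q}_F$, and analogously for $\Rcproj{\ell_F+1}{F}\cGF$ (after simplifying $\Rcproj{\ell_F+1}{F}\Rcproj{k}{F}=\Rcproj{\ell_F+1}{F}$ via \eqref{eq:Xcproj.ell+1.k}). Testing $\cGF$ against $\VROT_Fr\in\Roly{k-1}(F)$ in \eqref{eq:cGF} kills the face volumetric term (since $\DIV_F\VROT_F=0$) and leaves only edge contributions that agree on both sides. Testing against $\bvec{w}\in\cRoly{\ell_F+1}(F)$, one uses that $\DIV_F\bvec{w}\in\Poly{\ell_F}(F)$ to rewrite the face term with $\lproj{\ell_F}{F}$, then invokes \eqref{eq:proj.ErP} to conclude that $\int_F\EPoly{F}\Rgrad{F}\underline{q}_F\DIV_F\bvec{w}=\int_F\lproj{\ell_F}{F}q_F\DIV_F\bvec{w}=\int_F q_F\DIV_F\bvec{w}$.

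For the element components, the $\Rcproj{\ell_T+1}{T}\cGT$ component is treated in the same spirit: testing against $\bvec{w}\in\cRoly{\ell_T+1}(T)$ in the definition \eqref{eq:cGT} of $\cGT$ produces a volumetric term and face terms. The definition \eqref{eq:def.hatRqT} of $\RPoly{T}$ was engineered precisely so that, once we substitute the face component $\EPoly{T}\Rgrad{T}\underline{q}_T$ (with $\lproj{\ell_T}{T}\EPoly{T}\Rgrad{T}\underline{q}_T=\RPoly{T}\underline{q}_T$ by \eqref{eq:proj.ErP}) and rearrange, the $\trF\Egrad{F}\Rgrad{F}\underline{q}_F$ boundary contributions cancel, leaving exactly $\int_T\cGT\underline{q}_T\cdot\bvec{w}$. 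For the $\RRoly{T}$ component, we apply the definition \eqref{eq:def.hatu} to $\uvec{v}_T=\uGT\underline{q}_T$ and $\uvec{v}_T=\uGT\Egrad{T}\Rgrad{T}\underline{q}_T$: the volumetric terms involving $\cCT$ both vanish by the DDR local complex property $\uCT\uGT=\underline{0}$ (since $\cCT\uGT\underline{q}_T$ is in $\Goly{k}(T)^\perp\cap\cGoly{k}(T)^\perp=\{\bvec{0}\}$ inside $\vPoly{k}(T)$), and the remaining face contributions coincide because the face equality $\Rcurl{F}\uGF\Egrad{F}\Rgrad{F}\underline{q}_F=\Rcurl{F}\uGF\underline{q}_F$ has already been established, so the face traces $\trFt\Ecurl{F}\Rcurl{F}\uGF(\cdots)$ match.

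The main obstacle I expect is the element-level $\Rcproj{\ell_T+1}{T}\cGT$ part: the computation is delicate because one must combine \eqref{eq:cGT}, \eqref{eq:def.hatRqT}, and \eqref{eq:proj.ErP} in the right order, and the cancellation of the boundary terms involving $\trF\Egrad{F}\Rgrad{F}\underline{q}_F$ is precisely what justifies the otherwise non-obvious definition of $\RPoly{T}$. Everything else (edge triviality, face cases, and the use of local exactness for the $\RRoly{T}$ slot) is a relatively direct unpacking of definitions.
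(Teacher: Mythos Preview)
Your proof is correct and follows essentially the same componentwise strategy as the paper: trivial equality of edge and $\Roly{k-1}(F)$ components (depending only on skeletal data), direct verification of the $\cRoly{\ell_\sfP+1}(\sfP)$ components via \eqref{eq:cGF}/\eqref{eq:cGT} together with \eqref{eq:proj.ErP} and \eqref{eq:def.hatRqT}, and use of $\cCT\uGT=\bvec{0}$ plus the already-established face identity for the $\RRoly{T}$ component. The only cosmetic difference is that the paper routes the $\cRoly{\ell_\sfP+1}(\sfP)$ argument through $\SG{\sfP}$ (via \eqref{eq:proj.GPErP} and the serendipity problem), whereas you bypass $\SG{\sfP}$ and work directly with $\cGP$ and \eqref{eq:proj.ErP}; both arrive at the same cancellation.
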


\begin{proof}
  Let $\underline{q}_h\in\Xgrad{h}$ and set, for the sake of brevity, $\su{q}_h \coloneq \Rgrad{h}\underline{q}_h$.
  \medskip\\
  \emph{(i) Gradient components on $\cRoly{\ell_\sfP+1}(\sfP)$.} We start by proving the following result:
  For all $\sfP\in\Th\cup\Fh$,
  \begin{equation}\label{eq:RGER=RG.cRoly}
    \Rcproj{\ell_\sfP+1}{\sfP}\cGP\Egrad{\sfP}\Rgrad{\sfP}\underline{q}_\sfP
    = \Rcproj{\ell_\sfP+1}{\sfP}\cGP\underline{q}_\sfP\qquad
    \forall\underline{q}_\sfP\in\Xgrad{\sfP}.
  \end{equation}
  We detail the proof for the case $\sfP = T\in\Th$, the case $\sfP = F\in\Fh$ being similar.
  Owing to \eqref{eq:proj.GPErP} and \eqref{eq:Xcproj.ell+1.k} with $\ell = \ell_T$, we only have to prove that $\Rcproj{\ell_T+1}{T}\SG{T}\su{q}_T = \Rcproj{\ell_T+1}{T}\cGT\underline{q}_T$.
  This relation can be established taking $(\bvec{\tau},\bvec{\mu}) = (\bvec{0},\bvec{w}_T)$ with $\bvec{w}_T\in\cRoly{\ell_T+1}(T)$ as a test function in the problem defining $\SG{T}\su{q}_T$ (i.e., \eqref{eq:serendipity.problem:T} with $\mathcal{L}_T = \mathcal{L}_{\GRAD,T}$ given by \eqref{eq:LT.grad}) to write
  \[
  \int_T\SG{T}\su{q}_T\cdot\bvec{w}_T
  =
  -\int_T\RPoly{T}\underline{q}_T\DIV\bvec{w}_T
  + \sum_{F\in\FT}\omega_{TF}\int_F\trF\Egrad{F}\su{q}_F~(\bvec{w}_T\cdot\normal_F)
  \eqtext{\eqref{eq:def.hatRqT}} \int_T\cGT\underline{q}_T\cdot\bvec{w}_T.
  \]
  \\
  \emph{(ii) Cochain property of the reduction.}
  The components of $\Egrad{h}\su{q}_h$ and $\underline{q}_h$ on the mesh edge skeleton coincide, and thus so do their edge gradient as well as the components of their discrete gradients on $\Roly{k-1}(F)$, $F\in\Fh$ (depending only on the skeletal components).
  By definition of $\Rcurl{h}$, this shows that the equality in \eqref{eq:RGER=RG} holds for these components.
    The equality of the components on $\cRoly{\ell_F+1}(F)$, $F\in\Fh$, is an immediate consequence of \eqref{eq:RGER=RG.cRoly} for $\sfP = F$.

  Let now $T\in\Th$.
  We have just proved that, for all $F\in\FT$, $\suGF\su{q}_F=\Rcurl{F}\uGF\underline{q}_F$.  
  By \cite[Remark 15]{Di-Pietro.Droniou:21*2}, we have $\cCT\uGT=\bvec{0}$. Hence, the definition \eqref{eq:def.hatu} of $\RRoly{T}$ applied to $\uvec{v}_T=\uGT\Egrad{T}\su{q}_T$ and $\uvec{v}_T=\uGT\underline{q}_T$ shows that the components of each side of \eqref{eq:RGER=RG} on $\Roly{k-1}(T)$ coincide.
    The equality of the components on $\cRoly{\ell_T+1}(T)$, $T\in\Th$, is an immediate consequence of \eqref{eq:RGER=RG.cRoly}.
\end{proof}

\begin{lemma}[Cochain property of the extension]\label{lem:cochain.E.curl}
  For the slice of the complex between $\Xgrad{T}$ and $\Xcurl{T}$, property \ref{P:E.R.cochain} holds for the extensions, i.e.,
  \begin{equation}\label{eq:ERGE=GE}
    \Ecurl{h}\suGh\su{q}_h=\uGh\Egrad{h}\su{q}_h\qquad\forall \su{q}_h\in\sXgrad{h}.
  \end{equation}
\end{lemma}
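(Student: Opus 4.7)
The plan is to set $\underline{q}_h \coloneq \Egrad{h}\su{q}_h$ and $\uvec{v}_h \coloneq \uGh\underline{q}_h$, so that the identity to prove becomes $\Ecurl{h}\Rcurl{h}\uvec{v}_h = \uvec{v}_h$. Indeed, by the already-established cochain property of the reduction (Lemma \ref{lem:RGER=RG}), $\Rcurl{h}\uvec{v}_h = \Rcurl{h}\uGh\Egrad{h}\su{q}_h = \suGh\su{q}_h$, so the right-hand side of \eqref{eq:ERGE=GE} is $\Ecurl{h}\suGh\su{q}_h$, as required. I will then verify the equality component by component on each face $F\in\Fh$ and each element $T\in\Th$, comparing the definitions \eqref{eq:Ecurl.F}--\eqref{eq:Ecurl.T} of the extensions with the formulas \eqref{eq:R.curl.F}--\eqref{eq:R.curl.T} of the reductions applied to $\uvec{v}_h$.

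On the edges and on the $\Roly{k-1}(F)$-component of each face $F$, the equality $\Ecurl{F}\Rcurl{F}\uvec{v}_F=\uvec{v}_F$ is immediate from \eqref{eq:R.curl.F} and \eqref{eq:Ecurl.F}. For the $\cRoly{k}(F)$-component, the definition \eqref{eq:Ecurl.F} produces $\Rcproj{k}{F}\SC{F}(\Rcurl{F}\uvec{v}_F)$. Applying the face-level version of Lemma \ref{lem:RGER=RG} gives $\Rcurl{F}\uvec{v}_F = \suGF\su{q}_F$, and then the commutation property \eqref{eq:SRecRcurlG=GRAD} of Lemma \ref{lem:commutation.serendipity} yields $\SC{F}(\Rcurl{F}\uvec{v}_F) = \SG{F}\su{q}_F$. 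By \eqref{eq:proj.GPErP} we therefore have $\Rcproj{k}{F}\SC{F}(\Rcurl{F}\uvec{v}_F) = \Rcproj{k}{F}\cGF\Egrad{F}\su{q}_F = \Rcproj{k}{F}\cGF\underline{q}_F$, which is exactly the $\cRoly{k}(F)$-component of $\uGF\underline{q}_F=\uvec{v}_F$.

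On each element $T$, the $\cRoly{k}(T)$-component is treated identically to the $\cRoly{k}(F)$-component above, now using \eqref{eq:proj.GPErP} and Lemma \ref{lem:commutation.serendipity} for $\sfP=T$. The step I expect to be the main obstacle is the $\Roly{k-1}(T)$-component of $\Ecurl{T}\Rcurl{T}\uvec{v}_T$, which is $\RRoly{T}\uvec{v}_T$ and is defined implicitly by \eqref{eq:def.hatu}. To analyze it, I plug $\uvec{v}_T=\uGT\underline{q}_T$ into \eqref{eq:def.hatu} and use two ingredients: first, $\cCT\uGT\underline{q}_T = \bvec{0}$ by the local complex property (\cite[Remark 15]{Di-Pietro.Droniou:21*2}, already invoked in the proof of Lemma \ref{lem:RGER=RG}); second, the face result $\Ecurl{F}\Rcurl{F}\uvec{v}_F = \uvec{v}_F$ just established, which allows to replace $\trFt\Ecurl{F}\Rcurl{F}\uvec{v}_F$ by $\trFt\uvec{v}_F = \trFt\uGF\underline{q}_F$ in the boundary term. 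Comparing the resulting identity with the definition \eqref{eq:cCT} of $\cCT$ applied to $\uvec{v}_T=\uGT\underline{q}_T$ (which also uses $\cCT\uGT\underline{q}_T=\bvec{0}$) shows that, for all $\bvec{w}_T\in\cGoly{k}(T)$,
\[
\int_T\RRoly{T}\uvec{v}_T\cdot\CURL\bvec{w}_T
= \int_T\Rproj{k-1}{T}\cGT\underline{q}_T\cdot\CURL\bvec{w}_T.
\]
Since $\CURL:\cGoly{k}(T)\to\Roly{k-1}(T)$ is an isomorphism, this yields $\RRoly{T}\uvec{v}_T = \Rproj{k-1}{T}\cGT\underline{q}_T = \bvec{v}_{\cvec{R},T}$, completing the proof.
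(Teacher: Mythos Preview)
Your proof is correct and follows essentially the same route as the paper, hinging on Lemma~\ref{lem:commutation.serendipity} and \eqref{eq:proj.GPErP} for the $\cRoly{k}(\sfP)$ components and on \eqref{eq:def.hatu} together with $\cCT\uGT=\bvec{0}$ for the $\Roly{k-1}(T)$ component. The only minor differences are that your handling of $\Roly{k-1}(T)$ directly compares \eqref{eq:def.hatu} with the definition \eqref{eq:cCT} of $\cCT$ (the paper instead goes through $\trFt\uGF=\cGF$ and the element--face gradient link), and that your invocation of Lemma~\ref{lem:RGER=RG} in the first paragraph is unnecessary since $\Rcurl{h}\uvec{v}_h=\suGh\su{q}_h$ is just the definition \eqref{eq:def.tdg} of $\suGh$.
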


\begin{proof}
  Given that $\Ecurl{h}$ and $\Rcurl{h}$ leave the components on $\Poly{k}(E)$, $E\in\Eh$, and $\Roly{k-1}(F)$, $F\in\Fh$, unchanged, the equality of these components on each side of \eqref{eq:ERGE=GE} is trivial.
  For $F\in\Fh$ we have, owing to \eqref{eq:SRecRcurlG=GRAD} and \eqref{eq:proj.GPErP} for $\sfP = F$,
  \[
  \Rcproj{k}{F}\SC{F}\suGF\su{q}_F
  = \Rcproj{k}{F}\SG{F}\su{q}_F
  = \Rcproj{k}{F}\cGF\Egrad{F}\su{q}_F,
  \]
  which proves that the components on $\cRoly{k}(F)$ of each side of \eqref{eq:ERGE=GE}  also coincide, so that
  \begin{equation}\label{eq:ERGE=GE:1}
    \Ecurl{F}\suGF\su{q}_F
    = \Ecurl{F}\Rcurl{F}\uGF\Egrad{F}\su{q}_F
    = \uGF\Egrad{F}\su{q}_F
    \qquad\forall F\in\FT.
  \end{equation}
  With the same approach, using \eqref{eq:SRecRcurlG=GRAD} and \eqref{eq:proj.GPErP} for $\sfP=T\in\Th$, we show that the components on $\cRoly{k}(T)$ also coincide.
  It remains to analyse, for $T\in\Th$, the components on $\Roly{k-1}(T)$.
  Setting $\uvec{v}_T \coloneq \uGT\Egrad{T}\su{q}_T$, this component for the left-hand side of \eqref{eq:ERGE=GE} is $\RRoly{T}\uvec{v}_T$ given by \eqref{eq:def.hatu}.
  Plugging into this expression $\cCT\uvec{v}_T = \bvec{0}$ (consequence of \cite[Remark 15]{Di-Pietro.Droniou:21*2}),
  using \eqref{eq:ERGE=GE:1} to write $\Ecurl{F}\Rcurl{F}\uvec{v}_F = \uGF\Egrad{F}\su{q}_F$,
  using \cite[Eq.~(3.26)]{Di-Pietro.Droniou:21*2} to write $\trFt\uGF\Egrad{F}\su{q}_F = \cGF\Egrad{F}\su{q}_F$,
  and recalling the link between element and face gradients expressed by \cite[Eq. (3.17)]{Di-Pietro.Droniou:21*2},
  we infer that 
  \[
  \int_T\RRoly{T}\uvec{v}_T\cdot\CURL\bvec{w}_T=\int_T\cGT\Egrad{T}\su{q}_T\cdot\CURL\bvec{w}_T
  \qquad\forall \bvec{w}_T\in\cGoly{k}(T),
  \]
  which proves the equality of the components on $\Roly{k-1}(T)$ of each side of \eqref{eq:ERGE=GE}.
\end{proof}

The following  intermediate result will be used to establish \ref{P:complex.ER}, as well as \ref{P:E.R.cochain} for the last part of the complex involving $\Xdiv{h}$.

\begin{lemma}[Relation among {$\uCh$, $\Ecurl{h}$, and $\Rcurl{h}$}]
  It holds
  \begin{equation}\label{eq:CERu=Cu}
    \uCh\Ecurl{h}\Rcurl{h}\uvec{v}_h = \uCh\uvec{v}_h\qquad\forall \uvec{v}_h\in\Xcurl{h}.
  \end{equation}
\end{lemma}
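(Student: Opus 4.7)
The plan is to verify the equality component-by-component in $\Xdiv{h}$: separately for the face curl $\CF$ on each $F\in\Fh$ and for the two element projections $\Gproj{k-1}{T}\cCT$ and $\Gcproj{k}{T}\cCT$ on each $T\in\Th$. The face part is immediate. Inspecting the reduction \eqref{eq:R.curl.F} followed by the extension \eqref{eq:Ecurl.F}, one sees that both $\bvec{v}_{\cvec{R},F}$ and the edge polynomials $(v_E)_{E\in\EF}$ are left unchanged by $\Ecurl{F}\Rcurl{F}$, and since these are the only components of $\uvec{v}_F$ that enter the definition \eqref{eq:CF} of $\CF\uvec{v}_F$, one obtains $\CF\Ecurl{F}\Rcurl{F}\uvec{v}_F=\CF\uvec{v}_F$ at once.

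For the element part, fix $T\in\Th$. It suffices to prove that
\[
\int_T\cCT(\Ecurl{T}\Rcurl{T}\uvec{v}_T)\cdot\bvec{w}_T=\int_T\cCT\uvec{v}_T\cdot\bvec{w}_T
\qquad\forall\bvec{w}_T\in\Goly{k-1}(T)\oplus\cGoly{k}(T),
\]
and I would split the verification according to the two summands. For $\bvec{w}_T=\GRAD q$ with $q\in\Poly{k}(T)$, $\CURL\bvec{w}_T=\bvec{0}$ and the volumetric contribution in \eqref{eq:cCT} vanishes. On each $F\in\FT$, $\bvec{w}_T\times\normal_F=\VROT_F(q_{|F})\in\Roly{k-1}(F)$, so plugging a preimage $r_F\in\Poly{0,k+1}(F)$ (together with $\bvec{w}_F=\bvec{0}$) into the defining relation of $\trFt$ reduces the face boundary contribution to a combination of $\CF\uvec{v}_F$ and $(v_E)_{E\in\EF}$, both preserved by the face step. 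The same computation applies to $\Ecurl{T}\Rcurl{T}\uvec{v}_T$ and yields the same value, giving the equality on $\Goly{k-1}(T)$.

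For $\bvec{w}_T\in\cGoly{k}(T)$ the result is encoded in the very design of $\RRoly{T}$: rearranging \eqref{eq:def.hatu} yields
\[
\int_T\cCT\uvec{v}_T\cdot\bvec{w}_T=\int_T\RRoly{T}\uvec{v}_T\cdot\CURL\bvec{w}_T+\sum_{F\in\FT}\omega_{TF}\int_F\trFt\Ecurl{F}\Rcurl{F}\uvec{v}_F\cdot(\bvec{w}_T\times\normal_F),
\]
and inspecting \eqref{eq:R.curl.T}--\eqref{eq:Ecurl.T} shows that the $\Roly{k-1}(T)$ component of $\Ecurl{T}\Rcurl{T}\uvec{v}_T$ is precisely $\RRoly{T}\uvec{v}_T$ while its face traces reduce to $\Ecurl{F}\Rcurl{F}\uvec{v}_F$; hence the right-hand side is exactly the formula \eqref{eq:cCT} for $\cCT$ applied to $\Ecurl{T}\Rcurl{T}\uvec{v}_T$ and tested against $\bvec{w}_T$. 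The only conceptual step is recognising that $\RRoly{T}$ was engineered to make this identity automatic once the face traces are replaced by their extended/reduced counterparts; once this is seen, the verification is largely bookkeeping, and I do not anticipate any serious obstacle.
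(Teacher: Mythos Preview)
Your proposal is correct and follows essentially the same route as the paper: show that the face curls $\CF$ depend only on the components left untouched by $\Ecurl{F}\Rcurl{F}$, and then use the design of $\RRoly{T}$ in \eqref{eq:def.hatu} to close the $\cGoly{k}(T)$ part. The only cosmetic difference is that, for the $\Goly{k-1}(T)$ component, the paper invokes the already-established link between element and face curls (\cite[Proposition 4]{Di-Pietro.Droniou:21*2}) to conclude $\Gproj{k-1}{T}\cCT\Ecurl{T}\Rcurl{T}\uvec{v}_T=\Gproj{k-1}{T}\cCT\uvec{v}_T$ directly from $\CF\Ecurl{F}\Rcurl{F}\uvec{v}_F=\CF\uvec{v}_F$, whereas you reprove this fact by hand via the definition of $\trFt$.
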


\begin{proof}
  Let $\uvec{v}_h\in\Xcurl{h}$ and set, for the sake of brevity, $\suvec{v}_h\coloneq\Rcurl{h}\uvec{v}_h$.
  The components of $\uvec{v}_h$ and $\Ecurl{h}\suvec{v}_h$ on $\Poly{k}(E)$, $E\in\Eh$, and on $\Roly{k-1}(F)$, $F\in\Fh$, coincide.
  Since the face curls only depend on these components, we infer that $\CF\Ecurl{F}\suvec{v}_F = \CF\uvec{v}_F$ for all $F\in\Fh$.
  The link between full curl and face curls given in \cite[Proposition 4]{Di-Pietro.Droniou:21*2} then shows that $\Gproj{k-1}{T}\cCT\Ecurl{T}\suvec{v}_T = \Gproj{k-1}{T}\cCT\uvec{v}_T$ for all $T\in\Th$.
  We have therefore proved that the components on the faces and on $\Goly{k-1}(T)$, $T\in\Th$, of both sides of \eqref{eq:CERu=Cu} coincide.
  Let now $T\in\Th$ and consider the components of the discrete curls on $\cGoly{k}(T)$.  
  By definition \eqref{eq:Ecurl.T} of $\Ecurl{T}$, the component of $\Ecurl{T}\suvec{v}_T$ on $\Roly{k-1}(T)$ is that of $\suvec{v}_T$, which is $\RRoly{T}\uvec{v}_T$ given by \eqref{eq:def.hatu}.
  The definitions \eqref{eq:cCT} of $\cCT\Ecurl{T}\suvec{v}_T$ and \eqref{eq:def.hatu} of $\RRoly{T}\uvec{v}_T$ then give, for all $\bvec{w}_T\in\cGoly{k}(T)$,
  \begin{multline*}
    \int_T \cCT\Ecurl{T}\suvec{v}_T\cdot\bvec{w}_T
    \\
    =\int_T\RRoly{T}\uvec{v}_T\cdot\CURL\bvec{w}_T+\sum_{F\in\FT}\omega_{TF}\int_F\trFt\Ecurl{F}\suvec{v}_F\cdot(\bvec{w}_T\times\normal_F)=\int_T\cCT\uvec{v}_T\cdot\bvec{w}_T.
  \end{multline*}
  This implies that $\Gcproj{k}{T}\cCT\Ecurl{T}\suvec{v}_T = \Gcproj{k}{T}\cCT\uvec{v}_T$ and concludes the proof.
\end{proof}

\begin{lemma}[Properties of the serendipity curl space]\label{lem:properties.sXcurl}
  The serendipity construction for the curl space satisfies both the homological properties of Assumption \ref{ass:cohomology} and the analytical properties of Assumption \ref{ass:analytical}, with continuity constants in \ref{P:continuity} and \ref{P:continuity.interpolator} that do not depend on $h$.
\end{lemma}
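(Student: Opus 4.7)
I would proceed as in the proof of Lemma~\ref{lem:properties.sXgrad}, checking each required property in turn. Several pieces are immediate: \ref{P:consistency} is precisely \eqref{eq:EC.sfP:polynomial.consistency} for $\sfP = T$; \ref{P:continuity.interpolator} for $\Icurl{h}$ is known from \cite[Lemma 6]{Di-Pietro.Droniou:21*2}; and \ref{P:E.R.cochain} for the $\Xgrad{h}$--$\Xcurl{h}$ slice is exactly Lemmas~\ref{lem:RGER=RG} and~\ref{lem:cochain.E.curl}. For the $\Xcurl{h}$--$\Xdiv{h}$ slice, since no serendipity is performed on $\Xdiv{h}$ (cf.\ Remark~\ref{rem:no.seren.Xdiv}) the corresponding extension and reduction are the identity, so the reduction cochain property becomes \eqref{eq:CERu=Cu} while the extension cochain property reduces to the defining identity $\suCh = \uCh\Ecurl{h}$.

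For the left-inverse property \ref{P:extension}, I would establish the stronger statement $\Rcurl{T}\Ecurl{T} = \Id_{\sXcurl{T}}$ by a component-wise check. The edge and $\Roly{k-1}(F)$ components pass through unchanged; the $\cRoly{\ell_F+1}(F)$ and $\cRoly{\ell_T+1}(T)$ components are handled by \eqref{eq:Xcproj.ell+1.k} followed by \eqref{eq:Rcproj.SC=compl} at the face and element levels, respectively. For the $\Roly{k-1}(T)$ component, I would substitute $\uvec{v}_T = \Ecurl{T}\suvec{v}_T$ into \eqref{eq:def.hatu}, expand $\cCT\Ecurl{T}\suvec{v}_T$ via \eqref{eq:cCT}, invoke the analogous face-level identity $\Rcurl{F}\Ecurl{F}\suvec{v}_F = \suvec{v}_F$ (proved first by the same type of argument) to cancel the boundary contributions, and conclude using the isomorphism $\CURL\colon\cGoly{k}(T)\to\Roly{k-1}(T)$.

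The main obstacle is property \ref{P:complex.ER}: given $\uvec{v}_h \in \ker\uCh$, one must show that $(\Ecurl{h}\Rcurl{h} - \Id)\uvec{v}_h \in \Image\uGh$. Locally, exactness of the DDR complex on the topologically trivial element $T$ yields some $\underline{q}_T \in \Xgrad{T}$ with $\uvec{v}_T = \uGT\underline{q}_T$; combining both cochain properties then gives $(\Ecurl{T}\Rcurl{T} - \Id)\uvec{v}_T = \uGT\underline{p}_T$ with $\underline{p}_T \coloneq \Egrad{T}\Rgrad{T}\underline{q}_T - \underline{q}_T$. The delicate point is that the $\underline{q}_T$'s, being unique only modulo $\ker\uGT$ and constructed locally on each element, typically do not patch into a global $\underline{q}_h$. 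I would circumvent this by showing that $\underline{p}_T$ nonetheless does glue: its vertex and edge components vanish because $\Egrad{T}\Rgrad{T}$ leaves them untouched, while a short linearity check based on \eqref{eq:EG.sfP:polynomial.consistency} applied to a constant $c$ establishes, for $\sfP \in \{T\}\cup\FT$, the invariance
\[
\EPoly{\sfP}\Rgrad{\sfP}(\underline{q}_\sfP + \Igrad{\sfP}c) - (q_\sfP + c) = \EPoly{\sfP}\Rgrad{\sfP}\underline{q}_\sfP - q_\sfP,
\]
so that the face and element components of $\underline{p}_T$ are independent of the local choice of $\underline{q}_T$. The family $(\underline{p}_T)_{T\in\Th}$ then assembles into a well-defined $\underline{p}_h \in \Xgrad{h}$ satisfying $\uGh\underline{p}_h = (\Ecurl{h}\Rcurl{h} - \Id)\uvec{v}_h$.

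Finally, continuity \ref{P:continuity} of $\Rcurl{h}$ is proved component by component: skeleton and $\Roly{k-1}(F)$ components are isometric; the $\cRoly{\ell_\sfP+1}(\sfP)$ components are controlled by the $L^2$-boundedness of $\Rcproj{\ell_\sfP+1}{\sfP}$; and for $\RRoly{T}\uvec{v}_T$ one tests the defining relation \eqref{eq:def.hatu} against $\bvec{w}_T \in \cGoly{k}(T)$ with $\CURL\bvec{w}_T = \RRoly{T}\uvec{v}_T$, then bounds the right-hand side using discrete trace and inverse inequalities, the analogue of \eqref{eq:GT.boundedness} for $\cCT$, and the face-level bounds on $\trFt\Ecurl{F}\Rcurl{F}\uvec{v}_F$ obtained from \cite[Proposition 6]{Di-Pietro.Droniou:21*2}, \eqref{eq:est.EcurlF}, and the face-level version of \ref{P:continuity}; mesh regularity ensures that all constants are $h$-independent.
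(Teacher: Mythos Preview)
Your proposal is correct and covers all the required properties; in particular, the treatment of \ref{P:extension}, \ref{P:E.R.cochain}, \ref{P:continuity}, \ref{P:consistency}, and \ref{P:continuity.interpolator} coincides with the paper's.

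The genuine difference lies in the handling of \ref{P:complex.ER}. The paper applies local exactness not to $\uvec{v}_T$ but directly to the difference $\Ecurl{T}\Rcurl{T}\uvec{v}_T-\uvec{v}_T$, which by \eqref{eq:CERu=Cu} also belongs to $\ker\uCT$ and whose edge and $\Roly{k-1}(F)$ components vanish by construction. The resulting preimage $\underline{q}_T$ can then be normalised to $q_{\ET}=0$, after which $q_F$ is uniquely determined by the $\cRoly{k}(F)$ component of the difference (a single-valued face quantity), giving an explicit gluing. Your route---lifting $\uvec{v}_T$ itself, writing the difference as $\uGT\underline{p}_T$ with $\underline{p}_T=\Egrad{T}\Rgrad{T}\underline{q}_T-\underline{q}_T$, and invoking the constant-invariance coming from \eqref{eq:EG.sfP:polynomial.consistency}---is a legitimate alternative. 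It trades the paper's explicit identification of $q_F$ for a cleaner algebraic argument that avoids unpacking the structure of the difference vector; the one implicit step you should perhaps make explicit is that the restrictions of $\underline{q}_{T_1}$ and $\underline{q}_{T_2}$ to a common face $F$ differ by $\Igrad{F}c$ (since both are $\uGF$-preimages of the single-valued $\uvec{v}_F$), which is what your invariance then handles.
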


\begin{proof}
  \emph{(i) Proof of \ref{P:extension}}.
  Let $F\in\Fh$ and $\suvec{v}_F\in\sXcurl{F}$.
  By \eqref{eq:Rcproj.SC=compl} with $\sfP = F$, we have $\Rcproj{\ell_F+1}{F}\SC{F}\suvec{v}_F = \sbvec{v}_{\cvec{R},F}^\compl$.
  Combined with the definition \eqref{eq:R.curl.F} of $\Rcurl{F}$ with $\uvec{v}_F = \Ecurl{F}\suvec{v}_F$ and \eqref{eq:Xcproj.ell+1.k}, this gives
  \begin{equation}\label{eq:Rcurl.Ecurl.F}
    \forall F\in\Fh,\qquad
    \Rcurl{F}\Ecurl{F}\suvec{v}_F=\suvec{v}_F
    \qquad \forall \suvec{v}_F\in\sXcurl{F},
  \end{equation}
  the equality of the components on $\Poly{k}(E)$, $E\in\EF$, and $\Roly{k-1}(F)$ being trivial as these are not affected by the reduction/extension operators.
  
  Take now $\suvec{v}_h\in\sXcurl{h}$ and set, for the sake of brevity, $\uvec{v}_h\coloneq\Ecurl{h}\suvec{v}_h$.
  For all $T\in\Th$, invoking \eqref{eq:Rcurl.Ecurl.F} and using again \eqref{eq:Rcproj.SC=compl} and \eqref{eq:Xcproj.ell+1.k} gives
  \[
  \Rcurl{T}\uvec{v}_T
  = (\RRoly{T}\uvec{v}_T,\sbvec{v}_{\cvec{R},T}^\compl,(\sbvec{v}_{\cvec{R},F},\sbvec{v}_{\cvec{R},F}^\compl)_{F\in\FT},(\s{v}_E)_{E\in\ET}).
  \]
  According to its definition \eqref{eq:def.hatu}, $\RRoly{T}\uvec{v}_T\in\Roly{k-1}(T)$ satisfies, for all $\bvec{w}_T\in\cGoly{k}(T)$,
  \[
    \int_T \RRoly{T}\uvec{v}_T\cdot\CURL\bvec{w}_T
    =
    \int_T\cCT\uvec{v}_T\cdot\bvec{w}_T
    - \sum_{F\in\FT}\omega_{TF}\int_F\trFt\uvec{v}_F\cdot(\bvec{w}_T\times\normal_F)
    =\int_T\sbvec{v}_{\cvec{R},T}\cdot\CURL\bvec{w}_T,
  \]
  where we have used \eqref{eq:Rcurl.Ecurl.F} to write 
  \[
    \trFt\Ecurl{F}\Rcurl{F}\uvec{v}_F = \trFt\Ecurl{F}\Rcurl{F}\Ecurl{F}\suvec{v}_F = \trFt\Ecurl{F}\suvec{v}_F = \trFt\uvec{v}_F
  \]
  and concluded using the definition \eqref{eq:cCT} of $\cCT$ together with the fact that the component of $\uvec{v}_T = \Ecurl{T}\suvec{v}_T$ on $\Roly{k-1}(T)$ is $\sbvec{v}_{\cvec{R},T}$.
  Since $\CURL:\cGoly{k}(T)\to\Roly{k-1}(T)$ is onto, this gives $\RRoly{T}\uvec{v}_T=\sbvec{v}_{\cvec{R},T}$, concluding the proof of a stronger property than \ref{P:extension}, namely $\Rcurl{h}\Ecurl{h}=\Id$ on the whole of $\sXcurl{h}$.
  \medskip\\
  \emph{(ii) Proof of \ref{P:complex.ER}.}
  Let $\uvec{v}_h\in\ker\uCh$ and set $\suvec{v}_h \coloneq \Rcurl{h}\uvec{v}_h$.
  By \eqref{eq:CERu=Cu}, the vector
  \begin{equation}\label{eq:Erhv_minus_v}
  \begin{aligned}
    \Ecurl{h}\suvec{v}_h - \uvec{v}_h
    = \big(
    &(\RRoly{T}\uvec{v}_T-\bvec{v}_{\cvec{R},T},\Rcproj{k}{T}\SC{T}\suvec{v}_T-\bvec{v}_{\cvec{R},T}^\compl)_{T\in\Th},
    \\
    &(\bvec{0},\Rcproj{k}{F}\SC{F}\suvec{v}_F-\bvec{v}_{\cvec{R},F}^\compl)_{F\in\Fh}, \\
    &(0)_{E\in\Eh}\big)    
  \end{aligned}
  \end{equation}
  belongs to $\ker\uCh$.
  By local exactness, for all $T\in\Th$ we therefore have $\Ecurl{T}\suvec{v}_T - \uvec{v}_T = \uGT\underline{q}_T$ for some $\underline{q}_T\in\Xgrad{T}$ that is constant on the edge skeleton of $T$ (since its derivative there vanishes by \eqref{eq:Erhv_minus_v}).
  We can therefore assume, possibly after translation, that $q_{\ET}=0$.
  Moreover, for all $F\in\Fh$, by definition \eqref{eq:cGF} of $\cGF$, $q_F\in\Poly{k-1}(F)$ is entirely and uniquely fixed by the component $\Rcproj{k}{F}\cGF\underline{q}_F$ of $\uGT\underline{q}_T$ on $\cRoly{k}(F)$ (and the zero edge value of $\underline{q}_T$), which is $\Rcproj{k}{F}\SC{F}\suvec{v}_F-\bvec{v}_{\cvec{R},F}^\compl$ by \eqref{eq:Erhv_minus_v}; hence, $q_F$ matches between neighbouring elements. This enables us to trivially glue together all the $\underline{q}_T$ in each element into a vector $\underline{q}_h\in\Xgrad{h}$ that satisfies $\Ecurl{h}\suvec{v}_h - \uvec{v}_h = \uGh\underline{q}_h$, which concludes the proof of \ref{P:complex.ER}.
  \medskip\\
  \emph{(iii) Proof of \ref{P:E.R.cochain}.}
  This property is established in Lemmas \ref{lem:RGER=RG} and \ref{lem:cochain.E.curl}.
  \medskip\\
  \emph{(iv) Proof of \ref{P:continuity}.}
  The continuity of $\Rcurl{T}$ is proved in a similar way as that of $\Rgrad{T}$, using Lemma \ref{lem:equiv.norms.sXbullet}, the norm equivalence in $\Xcurl{T}$ stated in \cite[Lemma 5]{Di-Pietro.Droniou:21*2}, and the continuity of $\cCT$ stated in \cite[Proposition 13]{Di-Pietro.Droniou:21*1}.
  \medskip\\
  \emph{(v) Proof of \ref{P:consistency}.}
  This property is only meaningful for the local complex on a mesh element $T\in\Th$, for which it is simply \eqref{eq:EC.sfP:polynomial.consistency} for $\sfP=T$.
    \medskip\\
  \emph{(iv) Proof of \ref{P:continuity.interpolator}.} This property is proved in \cite[Lemma 6]{Di-Pietro.Droniou:21*2}.
\end{proof}

\subsection{Divergence space}\label{sec:Xdiv.no.serendipity}

As mentioned in Remark \ref{rem:no.seren.Xdiv}, no serendipity reduction of DOFs is performed on $\Xdiv{h}$. Hence, $\Ediv{T}=\Rdiv{T}=\Id_{\Xdiv{T}}$ and \ref{P:extension}, \ref{P:complex.ER}, \ref{P:continuity}, and \ref{P:consistency} are trivially satisfied.
Regarding \ref{P:E.R.cochain}, the co-chain property of the reduction follows from \eqref{eq:CERu=Cu}, while that of the extension is simply due to the definition \ref{eq:def.tdg} of $\suCh$.
  The property \ref{P:continuity.interpolator} is proved in \cite[Lemma 6]{Di-Pietro.Droniou:21*2}
\smallskip

Reducing the number of DOFs in discrete spaces associated with the divergence operator is actually technically quite challenging, as already noticed in \cite{Beirao-da-Veiga.Brezzi.ea:17}. However, we argue here that such a reduction is simply not possible if one wants to preserve even only two (important) properties of the discrete complex, namely the commutation property of Proposition \ref{prop:commutation.ID=dI} and the local exactness property.

Let $T\in\Th$.
The definition \eqref{eq:DT} of $\DT$ shows that this operator depends on the components on $\Poly{k}(F)$, $F\in\FT$, and $\Goly{k-1}(T)$ of $\Xdiv{T}$.
To preserve the commutation property, only the component $\cGoly{k}(T)$ can therefore be reduced (as already noticed in \cite{Beirao-da-Veiga.Brezzi.ea:17} in the context of VEM spaces). A candidate for the local serendipity space would therefore be:
For some $m_T<k-1$,
\begin{equation*}
  \sXdiv{T}\coloneq\Big\{
  \begin{aligned}[t]
    \uvec{w}_T
    &=\big(\bvec{w}_{\cvec{G},T},\bvec{w}_{\cvec{G},T}^\compl, (w_F)_{F\in\Fh}\big)\st
    \\
    &\qquad\text{$\bvec{w}_{\cvec{G},T}\in\Goly{k-1}(T)$ and $\bvec{w}_{\cvec{G},T}^\compl\in\cGoly{m_T+1}(T)$,
      and $w_F\in\Poly{k}(F)$ for all $F\in\FT$}
    \Big\},
  \end{aligned}
\end{equation*}
with reduction operator $\Rdiv{T}:\Xdiv{T}\to\sXdiv{T}$ defined by
\[
\Rdiv{T}\uvec{w}_T
=(\bvec{w}_{\cvec{G},T},\Gcproj{m_T+1}{T}\bvec{w}_{\cvec{G},T}^\compl,(w_F)_{F\in\FT})\quad\forall \uvec{w}_T\in\Xdiv{T}.
\]
However, such a choice would lead to a defect of the exactness property of the complex, specifically: 
\begin{equation}\label{eq:defect.exactness}
\ker\suCT\not\subset\Image\suGT.
\end{equation}
Let us clearly demonstrate this.
Since $m_T<k-1$, we can find $\sbvec{v}_{\cvec{R},T}\in\Roly{k-1}(T)$ such that $\sbvec{v}_{\cvec{R},T}\not=\bvec{0}$ and $\sbvec{v}_{\cvec{R},T}$ is orthogonal to $\Roly{m_T}(T)$ for the $L^2(T)$-inner product. Set
$\suvec{v}_T \coloneq (\sbvec{v}_{\cvec{R},T},\bvec{0},(\bvec{0},\bvec{0})_{F\in\FT},(0)_{E\in\ET})\in\sXcurl{T}$
and let $\uvec{v}_T \coloneq \Ecurl{T}\suvec{v}_T$.
We prove \eqref{eq:defect.exactness} by showing that 
\begin{equation}\label{eq:cex.exact}
  \suCT\suvec{v}_T
  \coloneq\Rdiv{T}\uCT\uvec{v}_T
  =\uvec{0}\quad\mbox{ and }\quad \suvec{v}_T\not\in\Image\suGT.
\end{equation}
We first notice that $\CF\uvec{v}_F=0$ for all $F\in\FT$, since all face and edge components of $\uvec{v}_T$ are equal zero.
The link between element and face discrete curls of \cite[Proposition 4]{Di-Pietro.Droniou:21*2} then shows that $\Gproj{k-1}{T}\cCT\uvec{v}_T=\bvec{0}$.
It remains to prove that the component of $\Rdiv{T}\uCT\uvec{v}_T$ on $\cGoly{m_T+1}(T)$ vanishes.
Since $m_T + 1 < k$, this component is $\Gcproj{m_T+1}{T}\cCT\uvec{v}_T$. Since $\uvec{v}_F=\uvec{0}$ for all $F\in\FT$, the definition \eqref{eq:cCT} of $\cCT$ yields
\[
\int_T \cCT\uvec{v}_T\cdot\bvec{z}_T
= \int_T \sbvec{v}_{\cvec{R},T}\cdot\CURL\bvec{z}_T=0\qquad\forall \bvec{z}_T\in\cGoly{m_T+1}(T),
\]
where we have used in the first equality $\bvec{v}_{\cvec{R},T}=\sbvec{v}_{\cvec{R},T}$ (see \eqref{eq:Ecurl.T}) and, to conclude, the orthogonality of $\sbvec{v}_{\cvec{R},T}$ and $\Roly{m_T}(T)$. This proves the first relation in \eqref{eq:cex.exact}.

We prove the second relation by contradiction. Assume that $\suvec{v}_T=\suGT\su{q}_T$ for some $\su{q}_T\in\sXgrad{T}$. This gives, for all $F\in\FT$, $\suGF\su{q}_F=\uvec{v}_F=\uvec{0}$ and thus, by the cochain property \ref{P:E.R.cochain}, 
$\uGF\Egrad{F}\su{q}_F=\Ecurl{F}\suGF\su{q}_F=\uvec{0}$.
Using the definition \eqref{eq:def.hatu} of $\RRoly{T}$ together with $\cCT\uGT=\bvec{0}$ (see \cite[Remark 15]{Di-Pietro.Droniou:21*2}), we deduce that $\RRoly{T}\uGT\Egrad{T}\su{q}_T=\bvec{0}$. Since $\suGT\su{q}_T=\suvec{v}_T$, this condition implies $\sbvec{v}_{\cvec{R},T}=\bvec{0}$, which contradicts our choice of $\sbvec{v}_{\cvec{R},T}$.

\subsection{Homological and analytical properties of the SDDR complex and numerical tests} \label{sec:tests}

Lemmas \ref{lem:properties.sXgrad} and \ref{lem:properties.sXcurl} show that the Diagram \eqref{eq:ser.ddr.seq} (as well as its global counterpart) fulfils Assumptions \ref{ass:cohomology} and \ref{ass:analytical}. By the results in Section \ref{sec:blueprint}, the SDDR complex thus inherits the homological and analytical properties of the DDR complex. Specifically:
\begin{itemize}[left=0pt,parsep=2pt,itemsep=2pt]
\item The SDDR complex has the same cohomology as the de Rham complex, whether the latter is exact (as in the case of domains without holes/tunnels) or has a non-trivial cohomology (\emph{Proposition \ref{prop:cohomologies}, \cite[Section 3]{Di-Pietro.Droniou:21*2} and \cite{Di-Pietro.Droniou.ea:22}});
\item The SDDR complex satisfies Poincar\'e's inequalities (\emph{Proposition \ref{prop:poincare} and \cite[Section 5]{Di-Pietro.Droniou:21*2}});
\item The SDDR complex satisfies primal and adjoint consistency properties of the reconstructed potentials and discrete differential operators (\emph{Propositions \ref{prop:primal.consistency}, \ref{prop:primal.consistency.Pi}, \ref{prop:commutation.ID=dI} and \ref{prop:adjoint.consistency}, and \cite[Section 6]{Di-Pietro.Droniou:21*2}}).
\end{itemize}

As a consequence, the numerical scheme for the magnetostatics problem \eqref{eq:magnetostatics} obtained replacing in \cite[Section 7]{Di-Pietro.Droniou:21*2} the space $\Xcurl{h}$ with its serendipity version $\sXcurl{h}$ has analogous convergence properties as the original one (see \cite[Theorem 12]{Di-Pietro.Droniou:21*2} for a precise statement of an error estimate). The same holds for the scheme for the Stokes problem obtained replacing in \cite[Eq.~(3.14)]{Beirao-da-Veiga.Dassi.ea:22} the spaces $\Xcurl{h}$ and $\Xgrad{h}$ with their serendipity counterparts.
To assess the possible difference in practical accuracy and the gain resulting from the use of the serendipity space, we therefore compare the original DDR and the SDDR schemes on the numerical examples of \cite[Section 7.3]{Di-Pietro.Droniou:21*2} (for the magnetostatics problem) and \cite[Section 5]{Beirao-da-Veiga.Dassi.ea:22} (for the Stokes problem, with pressure scaling $\lambda=1$), for polynomial degrees $k=1,2,3$; no DOF reduction is achieved for the lowest order degree $k=0$, which is therefore removed from our comparison.
All schemes are implemented within the open-source HArDCore3D C++ framework (see \url{https://github.com/jdroniou/HArDCore}), using standard C++ multi-threading routines and linear algebra facilities from the Eigen3 library (see \url{http://eigen.tuxfamily.org}).
All tests were performed on a Dell Precision 5820 desktop with a 14-core Intel Xeon processor (W-2275) clocked at 3.3GHz and equipped with 128Gb of DDR4 RAM, running Ubuntu {20.04.4} LTS.
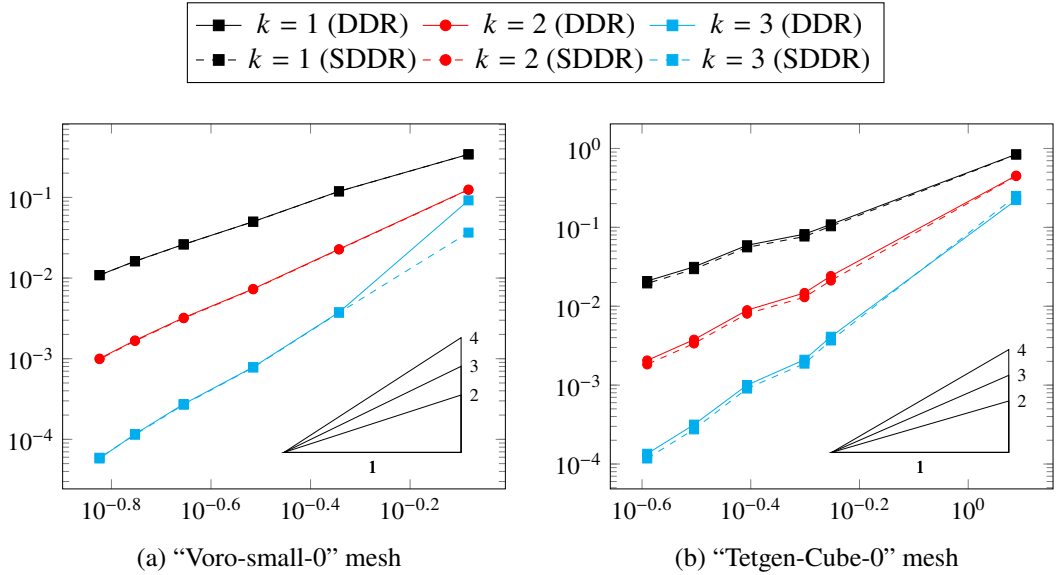
\begin{figure}\centering
  {
    \hypersetup{hidelinks}
    \ref{conv.legend}
  }
  \vspace{0.50cm}\\
  \begin{minipage}{0.45\textwidth}
    \begin{tikzpicture}[scale=0.85]
      \begin{loglogaxis}[legend columns=3 ]       
        \addplot [mark=square*, black] table[x=MeshSize,y=HcurlHdivError] {dat/magnetostatics/ddr/Voro-small-0_1/data_rates.dat};
        \logLogSlopeTriangle{0.90}{0.4}{0.1}{2}{black};
        \addplot [mark=*, red] table[x=MeshSize,y=HcurlHdivError] {dat/magnetostatics/ddr/Voro-small-0_2/data_rates.dat};
        \logLogSlopeTriangle{0.90}{0.4}{0.1}{3}{black};
        \addplot [mark=square*, cyan] table[x=MeshSize,y=HcurlHdivError] {dat/magnetostatics/ddr/Voro-small-0_3/data_rates.dat};
        \logLogSlopeTriangle{0.90}{0.4}{0.1}{4}{black};        
        \addplot [mark=square*, mark options=solid, black, dashed] table[x=MeshSize,y=HcurlHdivError] {dat/magnetostatics/sddr/Voro-small-0_1/data_rates.dat};
        \addplot [mark=*, mark options=solid, red, dashed] table[x=MeshSize,y=HcurlHdivError] {dat/magnetostatics/sddr/Voro-small-0_2/data_rates.dat};
        \addplot [mark=square*, mark options=solid, cyan, dashed] table[x=MeshSize,y=HcurlHdivError] {dat/magnetostatics/sddr/Voro-small-0_3/data_rates.dat};
      \end{loglogaxis}            
    \end{tikzpicture}
    \subcaption{``Voro-small-0'' mesh}
  \end{minipage}
  \begin{minipage}{0.45\textwidth}
    \begin{tikzpicture}[scale=0.85]
      \begin{loglogaxis}[legend columns=3, legend to name=conv.legend]       
        \addplot [mark=square*, black] table[x=MeshSize,y=HcurlHdivError] {dat/magnetostatics/ddr/Tetgen-Cube-0_1/data_rates.dat};
        \logLogSlopeTriangle{0.90}{0.4}{0.1}{2}{black};
        \addplot [mark=*, red] table[x=MeshSize,y=HcurlHdivError] {dat/magnetostatics/ddr/Tetgen-Cube-0_2/data_rates.dat};
        \logLogSlopeTriangle{0.90}{0.4}{0.1}{3}{black};
        \addplot [mark=square*, cyan] table[x=MeshSize,y=HcurlHdivError] {dat/magnetostatics/ddr/Tetgen-Cube-0_3/data_rates.dat};
        \logLogSlopeTriangle{0.90}{0.4}{0.1}{4}{black};        
        \addplot [mark=square*, mark options=solid, black, dashed] table[x=MeshSize,y=HcurlHdivError] {dat/magnetostatics/sddr/Tetgen-Cube-0_1/data_rates.dat};
        \addplot [mark=*, mark options=solid, red, dashed] table[x=MeshSize,y=HcurlHdivError] {dat/magnetostatics/sddr/Tetgen-Cube-0_2/data_rates.dat};
        \addplot [mark=square*, mark options=solid, cyan, dashed] table[x=MeshSize,y=HcurlHdivError] {dat/magnetostatics/sddr/Tetgen-Cube-0_3/data_rates.dat};
        \legend{$k=1$ (DDR), $k=2$ (DDR), $k=3$ (DDR), $k=1$ (SDDR), $k=2$ (SDDR), $k=3$ (SDDR)};
      \end{loglogaxis}                     
    \end{tikzpicture}
    \subcaption{``Tetgen-Cube-0'' mesh}
  \end{minipage}
  \caption{Relative errors in the discrete $\Hcurl{\Omega}\times\Hdiv{\Omega}$ norm vs.\ $h$, for the standard DDR scheme (continuous lines), and the SDDR scheme (dashed lines) applied to the magnetostatic problem. \label{fig:conv}}
\end{figure}

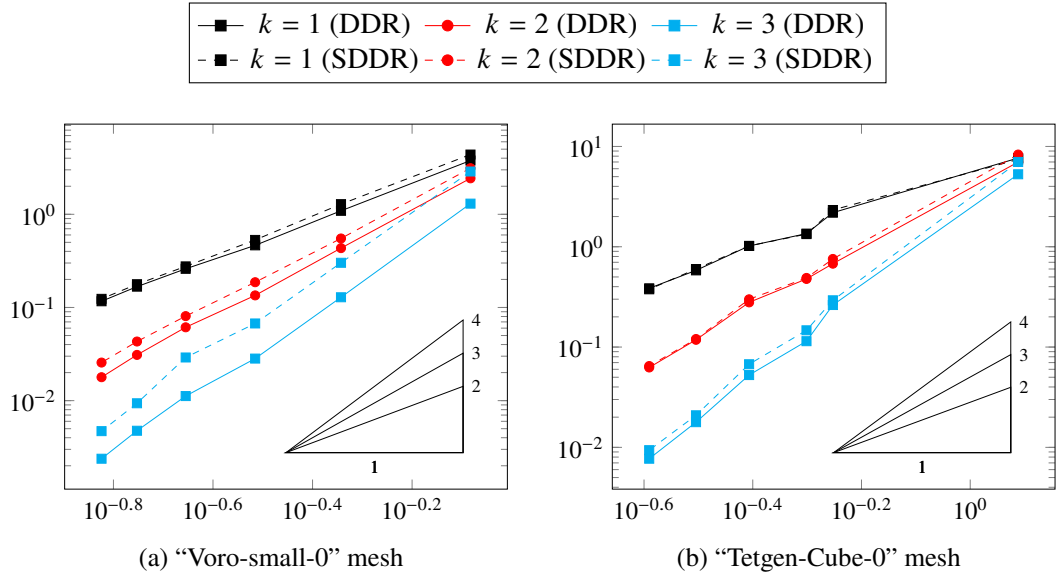
\begin{figure}\centering
  {
    \hypersetup{hidelinks}
    \ref{conv.stokes.legend}
  }
  \vspace{0.50cm}\\
  \begin{minipage}{0.45\textwidth}
    \begin{tikzpicture}[scale=0.85]
      \begin{loglogaxis}[legend columns=3 ]       
       \addplot [mark=square*, black] table[x=MeshSize,y expr=\thisrow{E_cHcurlVel}+\thisrow{E_cL2GradPre}] {dat/stokes/ddr/Voro-small-0_1/data_rates.dat};
        \logLogSlopeTriangle{0.90}{0.4}{0.1}{2}{black};
        \addplot [mark=*, red] table[x=MeshSize,y expr=\thisrow{E_cHcurlVel}+\thisrow{E_cL2GradPre}] {dat/stokes/ddr/Voro-small-0_2/data_rates.dat};
        \logLogSlopeTriangle{0.90}{0.4}{0.1}{3}{black};
        \addplot [mark=square*, cyan] table[x=MeshSize,y expr=\thisrow{E_cHcurlVel}+\thisrow{E_cL2GradPre}] {dat/stokes/ddr/Voro-small-0_3/data_rates.dat};
        \logLogSlopeTriangle{0.90}{0.4}{0.1}{4}{black};        
        \addplot [mark=square*, mark options=solid, black, dashed] table[x=MeshSize,y expr=\thisrow{E_cHcurlVel}+\thisrow{E_cL2GradPre}] {dat/stokes/sddr/Voro-small-0_1/data_rates.dat};
        \addplot [mark=*, mark options=solid, red, dashed] table[x=MeshSize,y expr=\thisrow{E_cHcurlVel}+\thisrow{E_cL2GradPre}] {dat/stokes/sddr/Voro-small-0_2/data_rates.dat};
        \addplot [mark=square*, mark options=solid, cyan, dashed] table[x=MeshSize,y expr=\thisrow{E_cHcurlVel}+\thisrow{E_cL2GradPre}] {dat/stokes/sddr/Voro-small-0_3/data_rates.dat};
      \end{loglogaxis}            
    \end{tikzpicture}
    \subcaption{``Voro-small-0'' mesh}
  \end{minipage}
  \begin{minipage}{0.45\textwidth}
    \begin{tikzpicture}[scale=0.85]
      \begin{loglogaxis}[legend columns=3, legend to name=conv.stokes.legend]       
        \addplot [mark=square*, black] table[x=MeshSize,y expr=\thisrow{E_cHcurlVel}+\thisrow{E_cL2GradPre}] {dat/stokes/ddr/Tetgen-Cube-0_1/data_rates.dat};
        \logLogSlopeTriangle{0.90}{0.4}{0.1}{2}{black};
        \addplot [mark=*, red] table[x=MeshSize,y expr=\thisrow{E_cHcurlVel}+\thisrow{E_cL2GradPre}] {dat/stokes/ddr/Tetgen-Cube-0_2/data_rates.dat};
        \logLogSlopeTriangle{0.90}{0.4}{0.1}{3}{black};
        \addplot [mark=square*, cyan] table[x=MeshSize,y expr=\thisrow{E_cHcurlVel}+\thisrow{E_cL2GradPre}] {dat/stokes/ddr/Tetgen-Cube-0_3/data_rates.dat};
        \logLogSlopeTriangle{0.90}{0.4}{0.1}{4}{black};        
        \addplot [mark=square*, mark options=solid, black, dashed] table[x=MeshSize,y expr=\thisrow{E_cHcurlVel}+\thisrow{E_cL2GradPre}] {dat/stokes/sddr/Tetgen-Cube-0_1/data_rates.dat};
        \addplot [mark=*, mark options=solid, red, dashed] table[x=MeshSize,y expr=\thisrow{E_cHcurlVel}+\thisrow{E_cL2GradPre}] {dat/stokes/sddr/Tetgen-Cube-0_2/data_rates.dat};
        \addplot [mark=square*, mark options=solid, cyan, dashed] table[x=MeshSize,y expr=\thisrow{E_cHcurlVel}+\thisrow{E_cL2GradPre}] {dat/stokes/sddr/Tetgen-Cube-0_3/data_rates.dat};
        \legend{$k=1$ (DDR), $k=2$ (DDR), $k=3$ (DDR), $k=1$ (SDDR), $k=2$ (SDDR), $k=3$ (SDDR)};
      \end{loglogaxis}                     
    \end{tikzpicture}
    \subcaption{``Tetgen-Cube-0'' mesh}
  \end{minipage}
  \caption{Relative errors in the $\Hcurl{\Omega}\times L^2(\Omega)^d$ norm (for the couple velocity--gradient of the pressure) vs.\ $h$, for the standard DDR scheme (continuous lines), and the SDDR scheme (dashed lines) applied to the Stokes problem. \label{fig:conv.stokes}}
\end{figure}

A comparison of the corresponding discrete errors as functions of the mesh size for the ``Tetgen-Cube-0'' (matching simplicial) and ``Voro-small-0'' (Voronoi) mesh families available in the HArDCore3D repository is provided in Figs.~\ref{fig:conv} and \ref{fig:conv.stokes}, showing that the DDR and SDDR achieve optimal rates of convergence, irrespective of the considered mesh, polynomial degree or problem. The accuracies of both methods are almost identical for the magnetostatics problem. For the Stokes problem, SDDR appears to be slightly less accurate that DDR on some meshes (by a maximum factor of about 2-3); a more precise analysis of each error component shows that the loss of accuracy only occurs on the pressure variable, which can be explained by the fact that $\sXgrad{h}$ has about 50\% fewer DOFs than $\Xgrad{h}$ on Voronoi meshes.


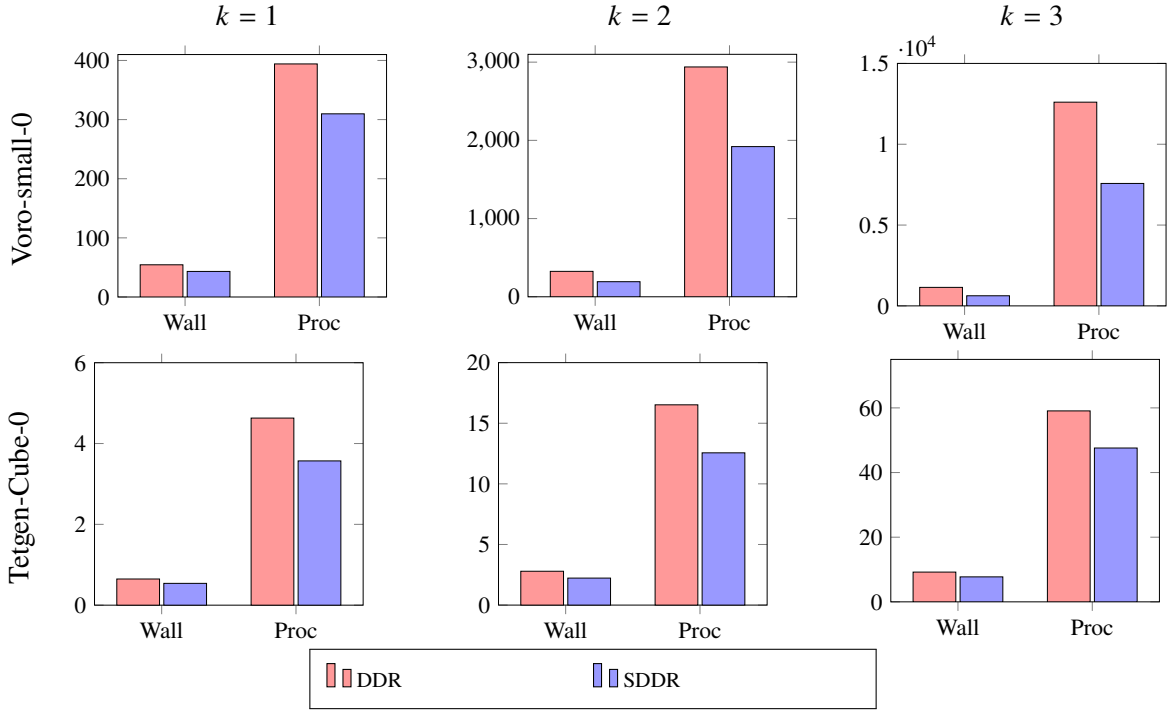
\begin{figure}\centering
  \begin{tabular}{cccc}
    & $k=1$ & $k=2$ & $k=3$ \\
    \rotatebox[origin=c]{90}{Voro-small-0}
    &
    \begin{minipage}{0.3\textwidth}
      \begin{tikzpicture}[scale=0.80]
        \begin{axis}[
            ybar,
            bar width=20pt,
            xticklabels={Wall,Proc},
            xtick=data,
            enlarge x limits=0.5,
            ymin=0,
            ymax=410,
            height=5.6cm,
            width=6cm,
            legend style={
              legend columns=-1,
              draw=none,
              font=\scriptsize
            },
            legend to name=leg:times:ddr
          ]
          \addplot[fill=red!40] table[x expr=\coordindex,y=Solve] {dat/magnetostatics/ddr/Voro-small-0_1/wall-proc-times.dat};
          \addplot[fill=blue!40] table[x expr=\coordindex,y=Solve] {dat/magnetostatics/sddr/Voro-small-0_1/wall-proc-times.dat};
          \legend{\parbox{24ex}{DDR},\parbox{24ex}{SDDR}}
        \end{axis}
      \end{tikzpicture}
    \end{minipage}
    &
    \begin{minipage}{0.3\textwidth}
      \begin{tikzpicture}[scale=0.80]
        \begin{axis}[
            ybar,
            bar width=20pt,
            xticklabels={Wall,Proc},
            xtick=data,
            enlarge x limits=0.5,
            ymin=0,
            ymax=3100,
            height=5.6cm,
            width=6cm
          ]
          \addplot[fill=red!40] table[x expr=\coordindex,y=Solve] {dat/magnetostatics/ddr/Voro-small-0_2/wall-proc-times.dat};
          \addplot[fill=blue!40] table[x expr=\coordindex,y=Solve] {dat/magnetostatics/sddr/Voro-small-0_2/wall-proc-times.dat};
        \end{axis}
      \end{tikzpicture}
    \end{minipage}
    &
    \begin{minipage}{0.3\textwidth}
      \begin{tikzpicture}[scale=0.80]
        \begin{axis}[
            ybar,
            bar width=20pt,
            xticklabels={Wall,Proc},
            xtick=data,
            enlarge x limits=0.5,
            ymin=0,
            ymax=15000,
            height=5.6cm,
            width=6cm
          ]
          \addplot[fill=red!40] table[x expr=\coordindex,y=Solve] {dat/magnetostatics/ddr/Voro-small-0_3/wall-proc-times.dat};
          \addplot[fill=blue!40] table[x expr=\coordindex,y=Solve] {dat/magnetostatics/sddr/Voro-small-0_3/wall-proc-times.dat};
        \end{axis}
      \end{tikzpicture}
    \end{minipage}
    \\
    \rotatebox[origin=c]{90}{Tetgen-Cube-0}
    &
    \begin{minipage}{0.3\textwidth}
      \begin{tikzpicture}[scale=0.80]
        \begin{axis}[
            ybar,
            bar width=20pt,
            xticklabels={Wall,Proc},
            xtick=data,
            enlarge x limits=0.5,
            ymin=0,
            ymax=6,
            height=5.6cm,
            width=6cm
          ]
          \addplot[fill=red!40] table[x expr=\coordindex,y=Solve] {dat/magnetostatics/ddr/Tetgen-Cube-0_1/wall-proc-times.dat};
          \addplot[fill=blue!40] table[x expr=\coordindex,y=Solve] {dat/magnetostatics/sddr/Tetgen-Cube-0_1/wall-proc-times.dat};
        \end{axis}
      \end{tikzpicture}
    \end{minipage}
    &
    \begin{minipage}{0.3\textwidth}
      \begin{tikzpicture}[scale=0.80]
        \begin{axis}[
            ybar,
            bar width=20pt,
            xticklabels={Wall,Proc},
            xtick=data,
            enlarge x limits=0.5,
            ymin=0,
            ymax=20,
            height=5.6cm,
            width=6cm
          ]
          \addplot[fill=red!40] table[x expr=\coordindex,y=Solve] {dat/magnetostatics/ddr/Tetgen-Cube-0_2/wall-proc-times.dat};
          \addplot[fill=blue!40] table[x expr=\coordindex,y=Solve] {dat/magnetostatics/sddr/Tetgen-Cube-0_2/wall-proc-times.dat};
        \end{axis}
      \end{tikzpicture}
    \end{minipage}
    &
    \begin{minipage}{0.3\textwidth}
      \begin{tikzpicture}[scale=0.80]
        \begin{axis}[
            ybar,
            bar width=20pt,
            xticklabels={Wall,Proc},
            xtick=data,
            enlarge x limits=0.5,
            ymin=0,
            ymax=75,
            height=5.6cm,
            width=6cm
          ]
          \addplot[fill=red!40] table[x expr=\coordindex,y=Solve] {dat/magnetostatics/ddr/Tetgen-Cube-0_3/wall-proc-times.dat};
          \addplot[fill=blue!40] table[x expr=\coordindex,y=Solve] {dat/magnetostatics/sddr/Tetgen-Cube-0_3/wall-proc-times.dat};
        \end{axis}
      \end{tikzpicture}
    \end{minipage}
  \end{tabular}  
          \centerline{\fbox{\hypersetup{hidelinks}\ref{leg:times:ddr}}}
      \caption{Wall and processor times (in seconds) for the resolution of the linear system for the DDR and SDDR methods for the magnetostatics problem.
        Times correspond to the finest mesh of each sequence.}\label{fig:times}
\end{figure}

\begin{figure}\centering
  \begin{tabular}{cccc}
    & $k=1$ & $k=2$ & $k=3$ \\
    \rotatebox[origin=c]{90}{Voro-small-0}
    &
    \begin{minipage}{0.3\textwidth}
      \begin{tikzpicture}[scale=0.80]
        \begin{axis}[
            ybar,
            bar width=20pt,
            xticklabels={Wall,Proc},
            xtick=data,
            enlarge x limits=0.5,
            ymin=0,
            ymax=710,
            height=5.6cm,
            width=6cm,
            legend style={
              legend columns=-1,
              draw=none,
              font=\scriptsize
            },
            legend to name=leg:times:ddr.stokes
          ]
          \addplot[fill=red!40] table[x expr=\coordindex,y=Solve] {dat/stokes/ddr/Voro-small-0_1/wall-proc-times.dat};
          \addplot[fill=blue!40] table[x expr=\coordindex,y=Solve] {dat/stokes/sddr/Voro-small-0_1/wall-proc-times.dat};
          \legend{\parbox{24ex}{DDR},\parbox{24ex}{SDDR}}
        \end{axis}
      \end{tikzpicture}
    \end{minipage}
    &
    \begin{minipage}{0.3\textwidth}
      \begin{tikzpicture}[scale=0.80]
        \begin{axis}[
            ybar,
            bar width=20pt,
            xticklabels={Wall,Proc},
            xtick=data,
            enlarge x limits=0.5,
            ymin=0,
            ymax=5100,
            height=5.6cm,
            width=6cm
          ]
          \addplot[fill=red!40] table[x expr=\coordindex,y=Solve] {dat/stokes/ddr/Voro-small-0_2/wall-proc-times.dat};
          \addplot[fill=blue!40] table[x expr=\coordindex,y=Solve] {dat/stokes/sddr/Voro-small-0_2/wall-proc-times.dat};
        \end{axis}
      \end{tikzpicture}
    \end{minipage}
    &
    \begin{minipage}{0.3\textwidth}
      \begin{tikzpicture}[scale=0.80]
        \begin{axis}[
            ybar,
            bar width=20pt,
            xticklabels={Wall,Proc},
            xtick=data,
            enlarge x limits=0.5,
            ymin=0,
            ymax=20000,
            height=5.6cm,
            width=6cm
          ]
          \addplot[fill=red!40] table[x expr=\coordindex,y=Solve] {dat/stokes/ddr/Voro-small-0_3/wall-proc-times.dat};
          \addplot[fill=blue!40] table[x expr=\coordindex,y=Solve] {dat/stokes/sddr/Voro-small-0_3/wall-proc-times.dat};
        \end{axis}
      \end{tikzpicture}
    \end{minipage}
    \\
    \rotatebox[origin=c]{90}{Tetgen-Cube-0}
    &
    \begin{minipage}{0.3\textwidth}
      \begin{tikzpicture}[scale=0.80]
        \begin{axis}[
            ybar,
            bar width=20pt,
            xticklabels={Wall,Proc},
            xtick=data,
            enlarge x limits=0.5,
            ymin=0,
            ymax=5,
            height=5.6cm,
            width=6cm
          ]
          \addplot[fill=red!40] table[x expr=\coordindex,y=Solve] {dat/stokes/ddr/Tetgen-Cube-0_1/wall-proc-times.dat};
          \addplot[fill=blue!40] table[x expr=\coordindex,y=Solve] {dat/stokes/sddr/Tetgen-Cube-0_1/wall-proc-times.dat};
        \end{axis}
      \end{tikzpicture}
    \end{minipage}
    &
    \begin{minipage}{0.3\textwidth}
      \begin{tikzpicture}[scale=0.80]
        \begin{axis}[
            ybar,
            bar width=20pt,
            xticklabels={Wall,Proc},
            xtick=data,
            enlarge x limits=0.5,
            ymin=0,
            ymax=17,
            height=5.6cm,
            width=6cm
          ]
          \addplot[fill=red!40] table[x expr=\coordindex,y=Solve] {dat/stokes/ddr/Tetgen-Cube-0_2/wall-proc-times.dat};
          \addplot[fill=blue!40] table[x expr=\coordindex,y=Solve] {dat/stokes/sddr/Tetgen-Cube-0_2/wall-proc-times.dat};
        \end{axis}
      \end{tikzpicture}
    \end{minipage}
    &
    \begin{minipage}{0.3\textwidth}
      \begin{tikzpicture}[scale=0.80]
        \begin{axis}[
            ybar,
            bar width=20pt,
            xticklabels={Wall,Proc},
            xtick=data,
            enlarge x limits=0.5,
            ymin=0,
            ymax=65,
            height=5.6cm,
            width=6cm
          ]
          \addplot[fill=red!40] table[x expr=\coordindex,y=Solve] {dat/stokes/ddr/Tetgen-Cube-0_3/wall-proc-times.dat};
          \addplot[fill=blue!40] table[x expr=\coordindex,y=Solve] {dat/stokes/sddr/Tetgen-Cube-0_3/wall-proc-times.dat};
        \end{axis}
      \end{tikzpicture}
    \end{minipage}
  \end{tabular}  
          \centerline{\fbox{\hypersetup{hidelinks}\ref{leg:times:ddr.stokes}}}
      \caption{Wall and processor times (in seconds) for the resolution of the linear system for the DDR and SDDR methods for the Stokes problem.
        Times correspond to the finest mesh of each sequence.}\label{fig:times.stokes}
\end{figure}

The (processor and wall) CPU times for the resolution the linear systems (after static condensation of the remaining element unknowns) through the \texttt{MKL PARDISO} solver (see \url{https://software.intel.com/en-us/mkl}) are depicted in Figs.~\ref{fig:times} and \ref{fig:times.stokes}.
The observed gain is significant: for the magnetostatics problem and $k=3$ on the finest ``Voro-small-0'' mesh, a 40\% reduction of the solution processor time is observed for the SDDR scheme, translating into a 45\% reduction of the wall time (626s vs.~1145s). The gain is even more pronounced for the Stokes problem, with a reduction of about 57\% in processor time and 52\% in wall time (742s vs.~1566s), which again can be explained by the fact that the serendipity process is more efficient on the discrete $H^1$ space, used in the scheme for the Stokes problem but not for the magnetostatics problem.
A comparison of the assembly wall and processor times, not reported here for the sake of conciseness, also confirms that, being an embarrassingly parallel step, assembly can fully benefit from shared-memory parallelism and that the additional calculations required for the serendipity spaces form a negligible portion of that assembly time.


\appendix

\section{Estimates of polynomials from boundary values}\label{sec:estimates.proofs}

\subsection{Proof of Lemma \ref{lem:reconstruction}}\label{sec:estimates.proofs:reconstruction}

Let $q\in\Poly{k+1}(\sfP)$ and let us list the elements of $\sbdry{\sfP}$ as $\sfb_1,\ldots,\sfb_{\eta_\sfP}$.
We will construct by induction polynomials $(q_i)_{i=1,\ldots,\eta_\sfP}$ and $(\widetilde{q}_i)_{i=1,\ldots,\eta_\sfP}$ such that, for all $i=1,\ldots,\eta_\sfP$,
\begin{equation}\label{eq:decomposition.q}
\begin{aligned}
&q=q_1+\dist_{\sfP\sfb_1}q_2+\cdots+(\dist_{\sfP\sfb_1}\cdots\dist_{\sfP\sfb_{i-1}})q_i+(\dist_{\sfP\sfb_1}\cdots\dist_{\sfP\sfb_i})\widetilde{q}_i,\\
&q_i\in\Poly{k+1-(i-1)}(\sfP)\,,\quad\widetilde{q}_i\in\Poly{k+1-i}(\sfP)\,,\quad \norm[\sfP]{q_i}\lesssim h_\sfP^{\nicefrac12}(\norm[\sfb_1]{q_{|\sfb_1}}+\cdots+\norm[\sfb_i]{q_{|\sfb_i}}).
\end{aligned}
\end{equation}

Let us first consider $i=1$. We can easily extend $q_{|\sfb_1}$ (e.g.~by making it independent of the normal coordinate to $\sfb_1$) as a polynomial $q_1\in\Poly{k+1}(\sfP)$ such that $\norm[\sfP]{q_1}\lesssim h_\sfP^{\nicefrac12}\norm[\sfb_1]{q_{|\sfb_1}}$.
Then, $q-q_1\in\Poly{k+1}(\sfP)$ vanishes on $\sfb_1$, and can thus be factorised by $\dist_{\sfP\sfb_1}\in\Poly{1}(\sfP)$: there is $\widetilde{q}_1\in\Poly{k}(\sfP)$ such that $q-q_1=\dist_{\sfP\sfb_1}\widetilde{q}_1$. This concludes the proof of \eqref{eq:decomposition.q} for $i=1$.

Let us now take $j\le \eta_\sfP-1$ and show that this relation holds for $i=j+1$ if it holds for $i=1,\ldots,j$. Since 
\begin{equation}\label{eq:est.dist}
|\dist_{\sfP\sfb_r}|\lesssim 1\qquad\forall r=1,\ldots,\eta_\sfP,
\end{equation}
we have, by triangle inequality and the equality in \eqref{eq:decomposition.q} for $i=j$,
\begin{align}
\norm[\sfb_{j+1}]{(\dist_{\sfP\sfb_1}\cdots\dist_{\sfP\sfb_j})_{|\sfb_{j+1}}(\widetilde{q}_j)_{|\sfb_{j+1}}}\le{}&
\norm[\sfb_{j+1}]{q_{|\sfb_{j+1}}}+\norm[\sfb_{j+1}]{(q_1)_{|\sfb_{j+1}}}+\cdots+\norm[\sfb_{j+1}]{(q_j)_{|\sfb_{j+1}}}\nonumber\\
\lesssim{}&\norm[\sfb_{j+1}]{q_{|\sfb_{j+1}}}+\norm[\sfb_1]{q_{|\sfb_1}}+\cdots+\norm[\sfb_j]{q_{|\sfb_j}}
\label{eq:est.dec.q.1}
\end{align}
where the second line is obtained using discrete trace inequalities and the estimates in \eqref{eq:decomposition.q} for $i=1,\ldots,j$.
Assumption \ref{assum:choice.detaP} ensures that $0\le (\dist_{\sfP\sfb_1}\cdots\dist_{\sfP\sfb_j})_{|\sfb_{j+1}}\lesssim 1$, and that $(\dist_{\sfP\sfb_1}\cdots\dist_{\sfP\sfb_j})_{|\sfb_{j+1}}\gtrsim 1$ on a ball in $\sfb_{j+1}$ of radius $\gtrsim h_{\sfb_{j+1}}$. The arguments in the proof of \cite[Lemma 1.25]{Di-Pietro.Droniou:20} then show that the norms $\norm[\sfb_{j+1}]{{\cdot}}$ and $\norm[\sfb_{j+1}]{(\dist_{\sfP\sfb_1}\cdots\dist_{\sfP\sfb_j})_{|\sfb_{j+1}}{\cdot}}$ are equivalent on $\Poly{k+1-j}(\sfb_{j+1})$, and we infer from \eqref{eq:est.dec.q.1} that
\[
\norm[\sfb_{j+1}]{(\widetilde{q}_j)_{|\sfb_{j+1}}}\lesssim \norm[\sfb_1]{q_{|\sfb_1}}+\cdots+\norm[\sfb_j]{q_{|\sfb_j}}+\norm[\sfb_{j+1}]{q_{|\sfb_{j+1}}}.
\]
We then extend $(\widetilde{q}_j)_{|\sfb_{j+1}}$ into $q_{j+1}\in\Poly{k+1-j}(\sfP)$ such that
\[
\norm[\sfP]{q_{j+1}}\lesssim h_{\sfP}^{\nicefrac12}\norm[\sfb_{j+1}]{(\widetilde{q}_j)_{|\sfb_{j+1}}}\lesssim
h_{\sfP}^{\nicefrac12}(\norm[\sfb_1]{q_{|\sfb_1}}+\cdots+\norm[\sfb_{j+1}]{q_{|\sfb_{j+1}}})
\]
and we note that, since $\widetilde{q}_j-q_{j+1}\in\Poly{k+1-j}(\sfP)$ vanishes on $\sfb_{j+1}$, we can factorise this polynomial
into $\widetilde{q}_j-q_{j+1}=\dist_{\sfP\sfb_{j+1}}\widetilde{q}_{j+1}$ with $\widetilde{q}_{j+1}\in\Poly{k+1-j-1}(\sfP)$, which concludes
the proof of \eqref{eq:decomposition.q} for $i=j+1$.

To conclude the proof of the lemma, we consider \eqref{eq:decomposition.q} with $i=\eta_\sfP$. A triangle inequality and the estimate stated in this relation, together with \eqref{eq:est.dist}, give
\begin{equation}\label{eq:est.dec.q}
\norm[\sfP]{q}\lesssim h_\sfP^{\nicefrac12}(\norm[\sfb_1]{q_{|\sfb_1}}+\cdots+\norm[\sfb_i]{q_{|\sfb_{\eta_\sfP}}})+\norm[\sfP]{\widetilde{q}_{\eta_\sfP}}.
\end{equation}
Multiplying the equality in \eqref{eq:decomposition.q} by $\widetilde{q}_{\eta_\sfP}$ and integrating over $\sfP$ yields
\begin{align*}
\int_\sfP (\dist_{\sfP\sfb_1}\cdots\dist_{\sfP\sfb_{\eta_\sfP}})\widetilde{q}_{\eta_\sfP}^2={}&
  \int_\sfP q\widetilde{q}_{\eta_\sfP}-\int_\sfP q_1\widetilde{q}_{\eta_\sfP}-\cdots-\int_\sfP(\dist_{\sfP\sfb_1}\cdots\dist_{\sfP\sfb_{\eta_\sfP-1}})q_{\eta_\sfP}\widetilde{q}_{\eta_\sfP}\\
={}&
\int_\sfP (\lproj{k+1-\eta_\sfP}{\sfP}q)\widetilde{q}_{\eta_\sfP}-\int_\sfP q_1\widetilde{q}_{\eta_\sfP}-\cdots-\int_\sfP(\dist_{\sfP\sfb_1}\cdots\dist_{\sfP\sfb_{\eta_\sfP-1}})q_{\eta_\sfP}\widetilde{q}_{\eta_\sfP},
\end{align*}
where the introduction of the projector in the second line is justified by $\widetilde{q}_{\eta_\sfP}\in\Poly{k+1-\eta_\sfP}(\sfP)$. Since $\dist_{\sfP\sfb_1}\cdots\dist_{\sfP\sfb_{\eta_\sfP}}$
is nonnegative and, by mesh regularity assumption, $\gtrsim 1$ on a ball in $\sfP$ of radius $\gtrsim h_{\sfP}$, we can use
the arguments in the proof of \cite[Lemma 1.25]{Di-Pietro.Droniou:20}, \eqref{eq:est.dist} and Cauchy--Schwarz inequalities to get
\[
\norm[\sfP]{\widetilde{q}_{\eta_\sfP}}^2\lesssim
\int_\sfP (\dist_{\sfP\sfb_1}\cdots\dist_{\sfP\sfb_{\eta_\sfP}})\widetilde{q}_{\eta_\sfP}^2\\
\lesssim \norm[\sfP]{\lproj{k+1-\eta_\sfP}{\sfP}q}\norm[\sfP]{\widetilde{q}_{\eta_\sfP}}+\norm[\sfP]{q_1}\norm[\sfP]{\widetilde{q}_{\eta_\sfP}}
+\cdots+\norm[\sfP]{q_{\eta_\sfP}}\norm[\sfP]{\widetilde{q}_{\eta_\sfP}}.
\]
Simplifying by $\norm[\sfP]{\widetilde{q}_{\eta_\sfP}}$ gives an upper bound on this quantity which, plugged into \eqref{eq:est.dec.q} together with the estimates on $\norm[\sfP]{q_i}$ stated in \eqref{eq:decomposition.q} for $i=1,\ldots,\eta_\sfP$, concludes the proof of \eqref{eq:est.reconstruction}.

\subsection{Proof of Lemma \ref{lem:reconstruction.vpoly}}\label{sec:estimates.proofs:reconstruction.vpoly}

We focus on the case $\sfP = T\in\Th$, the case $\sfP = F\in\Fh$ being similar.
Let $\bvec{v}\in\vPoly{k}(T)$. We first prove that
\begin{equation}\label{eq:est.curlv}
h_T\norm[T]{\CURL\bvec{v}}\lesssim \norm[T]{\Rproj{k-2}{T}\bvec{v}}+\left(\sum_{F\in\FT} h_{T}\norm[F]{\bvec{v}_{{\rm t},F}}^2\right)^{\nicefrac12}.
\end{equation}
For all $\bvec{w}_T\in\vPoly{k-1}(T)$, integrating by parts and introducing $\Rproj{k-2}{T}$ (owing to $\CURL\bvec{w}_T\in\Roly{k-2}(T)$) gives
\[
  \int_T \CURL\bvec{v} \cdot\bvec{w}_T
  =
  \int_T \Rproj{k-2}{T}\bvec{v}\cdot\CURL\bvec{w}_T
  + \sum_{F\in\FT}\omega_{TF}\int_F \bvec{v}_{{\rm t},F}\cdot(\bvec{w}_T\times\normal_F).
\]
Making $\bvec{w}_T = h_T\CURL\bvec{v}$, using Cauchy--Schwarz and discrete inverse and trace inequalities, and simplifying leads to \eqref{eq:est.curlv}.

We now turn to the estimate on $\bvec{v}$, which we decompose as $\bvec{v}=\GRAD q+\bvec{z}$ with $(q,\bvec{z})\in\Poly{k+1}(T)\times\cGoly{k}(T)$ (see \eqref{eq:vPoly=Goly+cGoly}).
Since $\CURL\GRAD=0$, we have $\CURL\bvec{v}=\CURL\bvec{z}$ and, using the isomorphism estimate of $\CURL:\cGoly{k}(T)\to\Roly{k-1}(T)$ (see \cite[Lemma 9]{Di-Pietro.Droniou:21*2}), \eqref{eq:est.curlv} gives
\begin{equation}\label{est:bvecv.cGoly}
\norm[T]{\bvec{z}}\lesssim h_T\norm[T]{\CURL\bvec{z}}\lesssim \norm[T]{\Rproj{k-2}{T}\bvec{v}}+\left(\sum_{F\in\FT} h_{T}\norm[F]{\bvec{v}_{{\rm t},F}}^2\right)^{\nicefrac12}.
\end{equation}
Upon adding a constant to $q$, we can assume that $q$ has a zero average on $\partial T$; a Poincar\'e--Wirtinger inequality along this boundary, which can be obtained gluing Poincar\'e--Wirtinger inequalities on each face (following similar ideas as in \cite[Proof of Lemma 7]{Di-Pietro.Droniou:21*2}, but in a simpler way since $q_{|\partial T}$ is here continuous across the edges), then gives $\norm[\partial T]{q}\lesssim h_{T}\sum_{F\in\FT}\norm[F]{(\GRAD q)_{{\rm t},F}}$.
Since $(\GRAD q)_{{\rm t},F}=\bvec{v}_{{\rm t},F}-\bvec{z}_{{\rm t},F}$, the estimate \eqref{est:bvecv.cGoly} and discrete trace inequalities lead to
\begin{equation}\label{est:bvecv.qbdry}
\norm[\partial T]{q}\lesssim h_{T}\sum_{F\in\FT}\norm[F]{\bvec{v}_{{\rm t},F}}
+h_{T}\sum_{F\in\FT}\norm[F]{\bvec{z}_{{\rm t},F}}
\lesssim h_{T}^{\nicefrac12}\norm[T]{\Rproj{k-2}{T}\bvec{v}}+h_{T}^{\nicefrac12}\left(\sum_{F\in\FT} h_{T}\norm[F]{\bvec{v}_{{\rm t},F}}^2\right)^{\nicefrac12}.
\end{equation}
We now estimate $\lproj{k+1-\eta_T}{T}q$. For all $\bvec{w}_T\in\cRoly{k+2-\eta_T}(T)$, recalling that $\GRAD q=\bvec{v}-\bvec{z}$ we have
\[
\begin{aligned}
  \int_T q\DIV\bvec{w}_T
  &=
  -\int_T(\bvec{v}-\bvec{z})\cdot\bvec{w}_{T}
  + \sum_{F\in\FT}\omega_{TF}\int_F q~(\bvec{w}_{T}\cdot\normal_{F})
  \\
  &=
  -\int_T(\Rcproj{k+2-\eta_T}{T}\bvec{v}-\bvec{z})\cdot\bvec{w}_{T}
  + \sum_{F\in\FT}\omega_{TF}\int_F q~(\bvec{w}_{T}\cdot\normal_{F}).
\end{aligned}
\]
We now use Cauchy--Schwarz inequalities, discrete trace inequalities, \eqref{est:bvecv.qbdry}, and \eqref{est:bvecv.cGoly} to deduce
\[
\int_T q\DIV\bvec{w}_T\lesssim \norm[T]{\Rcproj{k+2-\eta_T}{T}\bvec{v}}\norm[T]{\bvec{w}_T}
+\left[
  \norm[T]{\Rproj{k-2}{T}\bvec{v}}+\left(\sum_{F\in\FT} h_{T}\norm[F]{\bvec{v}_{{\rm t},F}}^2\right)^{\nicefrac12}
  \right]\norm[T]{\bvec{w}_{T}}.
\]
This estimate and the isomorphism bound of $\DIV:\cRoly{k+2-\eta_T}(T)\to\Poly{k+1-\eta_T}(T)$ stated in \cite[Lemma 9]{Di-Pietro.Droniou:21*2} yield
\[
\norm[T]{\lproj{k+1-\eta_T}{T}q}\lesssim  h_T\norm[T]{\Rcproj{k+2-\eta_T}{T}\bvec{v}}
+h_T\norm[T]{\Rproj{k-2}{T}\bvec{v}}+h_T\left(\sum_{F\in\FT} h_{T}\norm[F]{\bvec{v}_{{\rm t},F}}^2\right)^{\nicefrac12}\eqcolon h_T\mathcal{N}(\bvec{v}).
\]
Plugging this estimate and \eqref{est:bvecv.qbdry} into \eqref{eq:est.reconstruction}, we infer $\norm[T]{q}\lesssim h_T\mathcal{N}(\bvec{v})$ and thus, via a discrete inverse inequality, $\norm[T]{\GRAD q}\lesssim \mathcal{N}(\bvec{v})$.
Since $\bvec{v}=\GRAD q+\bvec{z}$, this bound and \eqref{est:bvecv.cGoly} conclude the proof of \eqref{eq:est.reconstruction.curl}.


\section*{Acknowledgements}
The authors acknowledge the partial support of \emph{Agence Nationale de la Recherche} grant ANR-20-MRS2-0004 NEMESIS.
Daniele Di Pietro also acknowledges the partial support of I-Site MUSE grant ANR-16-IDEX-0006 RHAMNUS. The authors also thank Liam Yemm for his work on the mesh module in HArDCore3D.


\printbibliography

\end{document}